\definecolor{rouge}{rgb}{0.7,0.00,0.00}
\definecolor{vert}{rgb}{0.00,0.5,0.00}
\definecolor{bleu}{rgb}{0.00,0.00,0.8}
\newtheorem{theorem}{Theorem}[section]
\newtheorem*{theorem*}{Theorem}
\newtheorem{lemma}[theorem]{Lemma}
\newtheorem{proposition}[theorem]{Proposition}
\newtheorem{condition}{Condition}
\newtheorem{conditionA}{A\kern-0.1mm}
\renewcommand\dots{\hbox to 1em{.\hss.\hss.}}
\theoremstyle{definition}
\def \eref#1{\hbox{(\ref{#1})}}
\numberwithin{equation}{section}
\def\bb#1{\mathbb{#1}}
\def\scr#1{\mathscr{#1}}
\def\geq{\geqslant}
\def\leq{\leqslant}
\newcommand{\ee}{\varepsilon}
\DeclareMathOperator{\supp}{supp}
\DeclarePairedDelimiter\floor{\lfloor}{\rfloor}
\begin{document}

\title[Moderate deviations and local limit theorems]
{Moderate deviations and local limit theorems for the coefficients of random walks on the \\ general linear group$^1$}
\footnote{This paper is a part of the results which previously appeared in Xiao, Grama, Liu "Limit theorems for the coefficients of random walks on the general linear group" arXiv:2111.10569, 2021}

\author{Hui Xiao}
\author{Ion Grama}
\author{Quansheng Liu}

\curraddr[Xiao, H.]{Universit\"at Hildesheim, Institut f\"ur Mathematik und Angewandte Informatik, Hildesheim, Germany}
\email{xiao@uni-hildesheim.de}
\curraddr[Grama, I.]{ Universit\'{e} de Bretagne-Sud, LMBA UMR CNRS 6205, Vannes, France}
\email{ion.grama@univ-ubs.fr}
\curraddr[Liu, Q.]{ Universit\'{e} de Bretagne-Sud, LMBA UMR CNRS 6205, Vannes, France}
\email{quansheng.liu@univ-ubs.fr}



\begin{abstract}
Consider the random walk $G_n : = g_n \ldots g_1$, $n \geq 1$, 
where $(g_n)_{n\geq 1}$ is a sequence of independent and identically distributed 
random elements with law $\mu$ on the general linear group $\textup{GL}(V)$ with $V=\bb R^d$. 
Under suitable conditions on $\mu$,  we establish Cram\'{e}r type moderate deviation expansions
and local limit theorems with moderate deviations for the coefficients $\langle f, G_n v \rangle$, where $v \in V$ and $f \in V^*$.  
Our approach is based on the H\"older regularity of 
the invariant measure of the Markov chain $G_n \!\cdot \! x = \bb R G_n v$ on the projective space of $V$ 
with the starting point $x = \bb R v$, under the changed measure. 
\end{abstract}

\date{\today}
\subjclass[2010]{Primary 60F05, 60F15, 60F10; Secondary 37A30, 60B20}
\keywords{Random walks on groups; coefficients;
Cram\'{e}r type moderate deviations; local limit theorem.}

\maketitle



\section{Introduction} 
\subsection{Background and objectives}
There is growing interest in studying random walks on linear groups since
the groundwork of Furstenberg and Kesten \cite{FK60}, 
see also Le Page \cite{LeP82}, 
Guivarc'h and Raugi \cite{GR85},  Bougerol and Lacroix \cite{BL85}, Goldsheid and Margulis \cite{GM89},
Benoist and Quint \cite{BQ16b}, and the references therein. 
This theory has important applications in a number of research areas such as 
spectral theory \cite{BL85, CL90, BG12},  
geometric measure theory \cite{PT08, HS17, GK21}, 
statistical physics \cite{CPV93},   
homogeneous dynamics \cite{BFLM11, BQ13},
stochastic recursions and smoothing transforms \cite{Kes73, GL16, Men16},
and branching processes in random environment \cite{LPP18, GLP20a}.
These studies are often related to the asymptotic properties of the random walk
\begin{align*}
G_n : = g_n \ldots g_1,  \quad  n \geq 1, 
\end{align*}
where $(g_n)_{n \geq 1}$ is a sequence of independent and identically distributed
(i.i.d.) random elements with law $\mu$ on the general linear group $\textup{GL}(V)$ with $V = \bb R^d$. 
Of particular interest is the investigation of the growth rate of the coefficients $\langle f, G_n v \rangle$, 
where $v \in V$, $f \in V^*$ and $\langle \cdot, \cdot \rangle$ is the duality bracket: $\langle f, v \rangle = f(v)$.  
In this direction, Furstenberg and Kesten \cite{FK60} established the strong law of large numbers 
in the case of positive matrices, namely, 
$\lim_{n\to\infty} \frac{1}{n} \log | \langle f, G_n v \rangle | = \lambda_1,$ a.s., 
where $\lambda_1 \in \bb R$ is a constant (independent of $f$ and $v$) called the first Lyapunov exponent of $\mu$; 
see also Kingman \cite{Kin73}, 
Cohn, Nerman and Peligrad \cite{CNP93} and Hennion \cite{Hen97}. 
For invertible matrices, the law of large numbers with the corresponding Lyapunov exponent $\lambda_1$
 has been established by Guivarc'h and Raugi \cite{GR85}. 
The central limit theorem for the coefficients has also been established in \cite{GR85}  under the exponential moment condition on $\mu$
which has recently been relaxed to the optimal second moment condition by Benoist and Quint \cite{BQ16}: 
for any $t \in \bb R$, 
\begin{align}\label{Ch7_CLT_Entry0a}
\lim_{n \to \infty} \bb{P} \left( \frac{ \log | \langle f, G_n v \rangle | 
 - n \lambda_1}{ \sigma \sqrt{n} } \leq t  \right)  =  \Phi(t), 
\end{align}
where $\Phi$ is the standard normal distribution function on $\bb R$
and $\sigma^2 > 0$ is the asymptotic variance of $ \frac{1}{\sqrt n} \log | \langle f, G_n v \rangle |.$  
Further refinements of this result including the Berry-Esseen bound and the first-order Edgeworth expansion 
can be found in \cite{GR85, DKW21, CDMP21b, XGL22a}. 

In this paper, our first objective is to quantity the relative error in \eqref{Ch7_CLT_Entry0a} 
by establishing the Cram\'er type moderate deviation expansion under appropriate conditions on $\mu$: we prove that,
as $n \to \infty$,  uniformly in $t \in [0, o(\sqrt{n} )]$, $v \in V$ and $f \in V^*$ with $\|v\| = \|f\| =1$,  
\begin{align}
\frac{\bb{P} \Big( \frac{\log | \langle f, G_n v \rangle | 
- n\lambda_1}{ \sigma \sqrt{n} } \geq t \Big)} {1-\Phi(t)}
& =  e^{ \frac{t^3}{\sqrt{n}} \zeta ( \frac{t}{\sqrt{n}} ) } \big[ 1 +  o(1) \big], \label{Ch7_Cramer_Entry_0a}
\end{align}
where $\zeta$ is the Cram\'{e}r series, cf.\ \eqref{Ch7Def-CramSeri}. A similar expansion for the lower tail is also obtained.
More generally, we prove the Cram\'{e}r type moderate deviation expansion for the couple 
$(G_n \!\cdot\! x, \log |\langle f, G_n v \rangle|)$ with a target function $\varphi$ on the Markov chain $(G_n \!\cdot\! x)$
on the projective space $\bb P(V)$; see Theorem \ref{Thm-Cram-Entry_bb}. 
As information, for the norm cocycle $\log \frac{\| G_n v \|}{\|v\|}$, 
we note that the moderate deviation principle  has been proved in \cite{CDM17}
and  Cram\'er type moderate deviation expansion has been established in \cite{XGL19b}. 
For positive matrices, 
as a consequence of the expansion for the norm cocycle from \cite{XGL19b}, 
the corresponding result for the coefficients $\langle f, G_n v \rangle$ has been obtained in \cite{XGL19c}.  
It is worth mentioning that, in the case of invertible matrices, 
the proof of the expansion \eqref{Ch7_Cramer_Entry_0a} is much more involved and cannot be deduced from 
the corresponding result for the norm cocycle. 

Our second objective is to establish the  
local limit theorem with moderate deviations for the coefficients $\langle f, G_n v \rangle$: 
we prove that, for any real numbers $a_1 < a_2$, 
as $n \to \infty$, uniformly in $|t| = o(\sqrt{n})$, 
\begin{align}\label{LLT-MD-Intro}
\mathbb{P} \Big( \log| \langle f, G_n v \rangle | - n\lambda_1 \in [a_1, a_2] + \sqrt{n}\sigma t \Big) 
= \frac{a_2 - a_1 + o(1)}{ \sigma \sqrt{2 \pi n} } 
 e^{ - \frac{t^2}{2} + \frac{t^3}{\sqrt{n}}\zeta(\frac{t}{\sqrt{n}} ) }. 
\end{align}
In fact, as before, we will establish a more general local limit theorem 
 with moderate deviations ($|t| = o(\sqrt{n})$) for the couple $(G_n \!\cdot\! x, \log |\langle f, G_n v \rangle|)$,
 see Theorem \ref{ThmLocal02}.  
The Local limit theorem with large deviations ($|t| = c \sqrt{n}$)  
for the coefficients $\langle f, G_n v \rangle$ has been obtained in \cite{XGL19d}
using the Bahadur-Rao-Petrov type large deviation asymptotics. 
Local limit theorems with large and moderate deviations for the norm cocycle $\log \frac{\| G_n v \|}{\|v\|}$
have been established recently in \cite{BQ16b, XGL20-SPA, XGL19b}.

Finally we would like to mention that all the results of this paper remain valid when $V$ is $\bb C^d$ or $\bb K^d$, where 
$\bb K$ is any local field. 

\subsection{Proof strategy} 
Our strategy of the proof is based on the following decomposition
which relates the coefficients $\langle f, G_n v \rangle$ to the norm cocycle $\sigma (G_n, x) = \log \frac{\| G_n v \|}{\|v\|}$: 
for any $x = \bb R v \in \bb P(V)$ and $y = \bb R f \in \bb P(V^*)$ with $\|f\|=1$, 
writing $\delta(y, x) = \frac{|\langle f, v \rangle|}{\|f\| \|v\|}$, 
we have 
\begin{align}\label{Basic-scalar-decom}
\log |\langle f, G_n v \rangle| = \sigma (G_n, x) +  \log \delta(y, G_n \!\cdot\! x). 
\end{align}
To establish the Cram\'er type moderate deviation expansion \eqref{Ch7_Cramer_Entry_0a}, 
our approach is different from the standard one which is based on 
performing a change of measure and proving 
a Berry-Esseen bound under the changed measure; see for example Cram\'{e}r \cite{Cra38} and Petrov \cite{Pet75}. 
Note that, even with a Berry-Esseen bound for $\log |\langle f, G_n v \rangle|$ under the changed measure at hands,
we do not know how to obtain \eqref{Ch7_Cramer_Entry_0a}  
using this strategy. 
The main difficulty resides in the fact that, when we use a change of measure corresponding to
the norm cocycle $\sigma (G_n, x)$ in order to prove moderate deviations for $\log |\langle f, G_n v \rangle|$, 
the H\"older regularity of the error term $\log \delta(y, G_n \!\cdot\! x)$ in \eqref{Basic-scalar-decom}
is not enough to obtain the desired result. 
The approach in this paper
consists in decomposing the error $\log \delta(y, G_n \!\cdot\! x)$ 
into a sum of components using a partition $(\chi_{n,k}^y)_{k \geq 1}$ of the unity on the projective space $\bb P(V)$ 
 following the ideas from  \cite{DKW21, XGL22a}.
Then, for all components, we pass to the Fourier transforms under the changed measure, 
and establish their exact asymptotic expansions, see Propositions \ref{KeyPropo} and \ref{KeyPropo-02},
which are the key points of our proof.
We conclude by patching up these expansions using the exponential H\"older regularity 
of the invariant measure of the Markov chain $(G_n \!\cdot\! x)$ under the changed measure, which has been 
established recently in \cite{GQX20, XGL22a}. 
 The advantage of using the partition of the unity \cite{DKW21} 
is that each piece is a smooth compactly supported function
and therefore does not need to be smoothed additionally as in the previous works \cite{Xiao20, XGL19d}. 
In addition, patching  up this partition is more effective since there is no loss of mass due to the additional smoothing.
This simplifies the proofs and avoids the use of additional properties of the invariant measure like the zero-one 
law established in \cite{GQX20}.

The proof of the local limit theorem with moderate deviations \eqref{LLT-MD-Intro}
follows the same lines as the proof of the expansion \eqref{Ch7_Cramer_Entry_0a}. 
In the argument we use the uniform version of the exponential H\"older regularity 
of the invariant measure of the Markov chain $(G_n \!\cdot\! x)$
under the changed measure.


\section{Main results}

\subsection{Notation and conditions}
Consider the $d$-dimensional Euclidean space $V = \bb R^d$, where $d \geq 1$ is an integer. 
Let $e_1, \ldots, e_d$  be  a basis  of $V$.  
The norm on $V$ is denoted by $\|v\|^2 = \sum_{i=1}^d |v_i|^2$ for $v = \sum_{i=1}^d v_i e_i \in V$. 
Denote by $V^*$ the dual vector space of $V$ and by $e_1^*, \ldots, e_d^*$
the dual basis, so that $e_i^*(e_j)= 1$ if $i=j$ and  $e_i^*(e_j)= 0$ if $i\neq j$. 
Let $\wedge^2 V$ be the exterior product of $V$. 
We use the same symbol $\| \cdot \|$ for the norms induced on $\wedge^2 V$ and $V^*$.  
The projective space $\bb P (V)$ is equipped with the angular distance 
\begin{align}\label{Angular-distance}
d(x, x') = \frac{\| v \wedge v' \|}{ \|v\| \|v'\| }  \quad \mbox{for} \  x= \bb R v \in \bb P(V), \  x' = \bb R v' \in \bb P(V). 
\end{align}
The dual bracket is defined by $\langle f, v \rangle = f(v)$ for any $v \in V$ and $f \in V^*$. 
Denote
\begin{align*}
\delta(x,y) = \frac{| \langle f, v \rangle |}{\|f\| \|v\| }  \quad \mbox{for} \  x= \bb R v \in \bb P(V),  \  y = \bb R f \in \bb P(V^*).
\end{align*}
Let $\mathscr{C}(\bb{P}(V) )$ be the space of complex-valued continuous functions on $\bb{P}(V)$, 
equipped with the norm $\|\varphi\|_{\infty}: =  \sup_{x\in \bb{P}(V) } |\varphi(x)|$ for $\varphi \in \mathscr{C}(\bb{P}(V) )$.
For $\gamma>0$,  set
\begin{align*}
\|\varphi\|_{\gamma}: =  \|\varphi\|_{\infty} + 
  \sup_{x, x' \in \bb{P}(V): x \neq x'} \frac{|\varphi(x)-\varphi(x')|}{ d(x, x')^{\gamma} }.  
\end{align*}
The Banach space of complex-valued $\gamma$-H\"older continuous functions on $\bb P(V)$ is denoted by
\begin{align*}
\mathscr{B}_{\gamma}:= \left\{ \varphi\in \mathscr{C}(\bb P(V)): \|\varphi\|_{\gamma}< \infty \right\}. 
\end{align*}
The set of all bounded linear operators from $\mathscr{B}_{\gamma}$ to $\mathscr{B}_{\gamma}$,
 equipped with the operator norm
$\left\| \cdot \right\|_{\mathscr{B}_{\gamma} \to \mathscr{B}_{\gamma}}$, 
is denoted by $\mathscr{L(B_{\gamma},B_{\gamma})}$. 
The topological dual of $\mathscr B_\gamma$, denoted by $\mathscr B'_\gamma$, 
is endowed with the induced norm.   
All over the paper, we denote by $c, C$ positive constants whose values may change from line to line.

Let $\textup{GL}(V)$ be the general linear group of the vector space $V$.
We consider a Borel probability measure $\mu$ on $\textup{GL}(V)$.  
We denote the action of  $g \in \textup{GL}(V)$ on a vector $v \in V$ by $gv$, 
and the action of $g \in \textup{GL}(V)$ on a projective line $x = \bb R v \in \bb P(V)$ by $g \cdot x = \bb R gv$. 
For any $g \in \textup{GL}(V)$, let $\| g \| = \sup_{v \in V \setminus \{0\} } \frac{\| g v \|}{\|v\|}$
and $N(g) = \max \{ \|g\|, \| g^{-1} \| \}$. 
We need the following exponential moment condition. 

\begin{conditionA}
\label{Ch7Condi-Moment} 
There exists a constant $\ee >0$ such that $\int_{ \textup{GL}(V) } N(g)^{\ee} \mu(dg) < \infty$. 
\end{conditionA}

Denote by $\Gamma_{\mu}$ the smallest closed subsemigroup
generated by the support of the measure $\mu$. 
We say that an endomorphism $g$ of $V$ is proximal 
if it has an eigenvalue $\lambda$ with multiplicity one and all other eigenvalues of $g$ have modulus strictly less than $|\lambda|$.
Introduce the following strong irreducibility and proximality condition. 

\begin{conditionA}\label{Ch7Condi-IP}
{\rm (i)(Strong irreducibility)} 
No finite union of proper subspaces of $V$ is $\Gamma_{\mu}$-invariant.

{\rm (ii)(Proximality)}
$\Gamma_{\mu}$ contains a proximal endomorphism. 
\end{conditionA}

The norm cocycle $\sigma: \textup{GL}(V) \times \bb P(V) \to \bb R$ is defined by
\begin{align*}
\sigma (g, x) = \log \frac{\|gv\|}{\|v\|}, \quad   \mbox{for any} \  g \in \textup{GL}(V)  \  \mbox{and}  \   x = \bb R v \in \bb P(V). 
\end{align*}
Recall that $\lambda_1 \in \bb R$ is the first Lyapunov exponent of $\mu$. 
According to Le Page \cite[Theorem 2]{LeP82}, 
under \ref{Ch7Condi-Moment} and \ref{Ch7Condi-IP},
the limit
\begin{align}\label{Def-sigma}
\sigma^2: = \lim_{n \to \infty} \frac{1}{n} \bb E \left[ (\sigma (G_n, x) - n \lambda_1)^2 \right] \in (0, \infty),  
\end{align}
exists and is independent of $x \in \bb P(V)$. 
For any $s \in (-s_0, s_0)$ with $s_0 >0$ small enough, 
and any bounded measurable function $\varphi$ on $\bb P(V)$, define
\begin{align}\label{Def_Ps001}
P_s \varphi(x) = \int_{ \textup{GL}(V) } e^{s \sigma(g, x)} \varphi(g \!\cdot\! x) \mu(dg),  
\quad  x \in \bb P(V). 
\end{align}
It is known that
the transfer operator $P_s \in \mathscr{L(B_{\gamma},B_{\gamma})}$ 
has a unique dominant eigenvalue $\kappa(s)$ 
with $\kappa(0) = 1$ and the mapping $s \mapsto \kappa(s)$ being analytic, see Lemma \ref{Ch7transfer operator}. 

Let $\Lambda = \log \kappa$ and $\gamma_m = \Lambda^{(m)}(0)$ for $m \geq 1$.
In particular, it holds that $\gamma_1 = \lambda_1$ and $\gamma_2 = \sigma^2$. 
In the whole paper, we write $\zeta$ for the Cram\'{e}r series \cite{Pet75}: 
\begin{align}\label{Ch7Def-CramSeri}
\zeta(t)=\frac{\gamma_3}{6\gamma_2^{3/2} } + \frac{\gamma_4\gamma_2-3\gamma_3^2}{24\gamma_2^3}t
+ \frac{\gamma_5\gamma_2^2-10\gamma_4\gamma_3\gamma_2 + 15\gamma_3^3}{120\gamma_2^{9/2}}t^2 + \cdots, 
\end{align}
which converges for  $|t|$ small enough.

Under conditions \ref{Ch7Condi-Moment} and \ref{Ch7Condi-IP},
the Markov chain $(G_n \!\cdot\! x)_{n \geq 0}$ has a unique invariant probability measure 
$\nu$ on $\bb P(V)$ such that for any bounded measurable function $\varphi$ on $\bb P(V)$,
\begin{align} \label{Ch7mu station meas}
\int_{\bb P(V)} \int_{\textup{GL}(V)} \varphi(g \!\cdot\! x) \mu(dg) \nu(dx) 
 = \int_{ \bb P(V) } \varphi(x) \nu(dx)
= : \nu(\varphi).
 \end{align}

\subsection{Moderate deviation expansions}
In this subsection 
we state the following Cram\'{e}r type moderate deviation expansions for 
the coefficients $\langle f, G_n v \rangle$, and more generally, 
for the couple $(G_n \!\cdot\! x, \log |\langle f, G_n v \rangle|)$ with a target function $\varphi$ 
on the Markov chain $(G_n \!\cdot\! x)_{n \geq 0}$. 

\begin{theorem}\label{Thm-Cram-Entry_bb}
Assume \ref{Ch7Condi-Moment} and \ref{Ch7Condi-IP}. 
Then, 
there exists a constant $\gamma >0$ such that for any $\varphi \in \mathscr{B}_{\gamma}$, 
we have, as $n \to \infty$, uniformly in $t \in [0, o(\sqrt{n} )]$, 
$x = \bb R v \in \bb P(V)$ and $f \in V^*$ with $\|v\| = \|f\| =1$,
\begin{align}
\frac{\bb{E} \left[ \varphi(G_n \!\cdot\! x) \mathds{1}_{ 
\left\{ \log| \langle f,  G_n v \rangle | - n \lambda_1 \geq \sqrt{n} \sigma t \right\} } 
\right] }  { 1-\Phi(t) }    
& =  e^{ \frac{t^3}{\sqrt{n}} \zeta(\frac{t}{\sqrt{n}} ) } \big[ \nu(\varphi) +  o(1) \big],  
\label{LD general upper001} \\
\frac{\bb{E} \left[ \varphi(G_n \!\cdot\! x) \mathds{1}_{ 
\left\{ \log| \langle f,  G_n v \rangle | - n \lambda_1 \leq - \sqrt{n} \sigma t  \right\} } \right] } { \Phi(-t)  }   
& =  e^{ - \frac{t^3}{\sqrt{n}} \zeta(-\frac{t}{\sqrt{n}} ) } \big[ \nu(\varphi) +  o(1) \big]. 
\label{LD general lower001}  
\end{align}
\end{theorem}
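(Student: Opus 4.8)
The plan is to build the expansion out of the decomposition \eqref{Basic-scalar-decom},
$\log|\langle f, G_n v\rangle| = \sigma(G_n, x) + \log\delta(y, G_n\!\cdot\!x)$,
by performing the Cram\'er change of measure adapted to the norm cocycle and then controlling the error term $\log\delta(y, G_n\!\cdot\!x)$ through a dyadic partition of the unity. Concretely, for a parameter $s = s_n \to 0$ (to be tuned so that $\Lambda'(s)\approx \lambda_1 + \sigma t/\sqrt n$, giving $s \asymp t/\sqrt n$), introduce the tilted transfer operator $P_s$ from \eqref{Def_Ps001} and its normalized version; write $P_s^n$ spectrally as $\kappa(s)^n\,\nu_s\otimes r_s + $ (exponentially small remainder), using the spectral gap from Lemma \ref{Ch7transfer operator}. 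The left-hand sides in \eqref{LD general upper001}--\eqref{LD general lower001} are then rewritten as $\kappa(s_n)^n$ times an expectation under the changed measure $\bb Q_{s_n,x}$ of $\varphi(G_n\!\cdot\!x)$ against an indicator involving $\sigma(G_n,x) + \log\delta(y,G_n\!\cdot\!x)$.

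First I would recall that under $\bb Q_{s,x}$ the Markov chain $(G_n\!\cdot\!x)$ still has a spectral gap and a unique invariant measure $\pi_s$ whose exponential H\"older regularity on $\bb P(V)$ is known from \cite{GQX20, XGL22a}; this is precisely what allows the error term to be handled. Next, decompose $\log\delta(y, G_n\!\cdot\!x) = \sum_{k\ge 1}\chi_{n,k}^y(G_n\!\cdot\!x)\log\delta(y,G_n\!\cdot\!x)$ using the partition $(\chi_{n,k}^y)_{k\ge1}$ of the unity indexed by dyadic scales of the angular distance $d(y^\perp, \cdot)$ (here the piece $k$ localizes to $d\approx 2^{-k}$, where $\log\delta \approx -k\log 2$, so $\delta$ is essentially constant on the support of $\chi_{n,k}^y$). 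On each piece I pass to the Fourier transform in the $\sigma$-variable under the changed measure and invoke Propositions \ref{KeyPropo} and \ref{KeyPropo-02} to get the exact asymptotic expansion (a local-CLT-type / Edgeworth-type expansion with the Cram\'er series appearing through $\Lambda$). Summing over $k$: the contribution of scale $k$ carries a factor $e^{-s_n k\log 2}\cdot \pi_{s_n}(\chi_{n,k}^y)$, and the exponential H\"older regularity $\pi_{s_n}(\{d(y^\perp,\cdot)\le 2^{-k}\}) \lesssim 2^{-\alpha k}$ makes the tail in $k$ summable and negligible once $k$ is large compared to $1/s_n \asymp \sqrt n/t$; the main term is the $k=O(1)$ part, which reassembles to $\nu(\varphi)$ after letting $s_n\to0$ (continuity of $s\mapsto\pi_s$ and $\pi_0 = \nu$).

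The arithmetic of the exponent is then the standard Cram\'er computation: $\kappa(s_n)^n / (1-\Phi(t))$ with $s_n$ solving $\Lambda'(s_n) = \lambda_1+\sigma t/\sqrt n$ produces $\exp(\frac{t^3}{\sqrt n}\zeta(\frac{t}{\sqrt n}))(1+o(1))$ uniformly in $t\in[0,o(\sqrt n)]$, by expanding $\Lambda$ around $0$ with $\gamma_1=\lambda_1$, $\gamma_2=\sigma^2$ and recognizing \eqref{Ch7Def-CramSeri}; this part is routine given Lemma \ref{Ch7transfer operator}. The lower-tail estimate \eqref{LD general lower001} follows by the same argument applied to $-\log|\langle f,G_nv\rangle|$, equivalently to the dual action on $\bb P(V^*)$, which is legitimate because \ref{Ch7Condi-Moment} and \ref{Ch7Condi-IP} are stable under $g\mapsto {}^t g^{-1}$. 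The main obstacle — and the reason the naive change-of-measure-plus-Berry--Esseen route fails — is obtaining \emph{uniformity in $f$ (equivalently in $y\in\bb P(V^*)$) and in $t$} while controlling the error $\log\delta(y,G_n\!\cdot\!x)$, which is unbounded near the hyperplane $\{d(y^\perp,\cdot)=0\}$: the partition-of-unity device plus the \emph{uniform} exponential H\"older regularity of $\pi_{s_n}$ (uniform over $s_n$ in a shrinking neighborhood of $0$ and over $y$) is exactly what is needed to absorb this singularity, and checking that the small-$k$ main term converges to $\nu(\varphi)$ uniformly, rather than merely pointwise in $x$, is the delicate point.
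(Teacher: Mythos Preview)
Your proposal captures the core machinery for large $t$ but has a genuine gap for small $t$. Propositions \ref{KeyPropo} and \ref{KeyPropo-02} carry an error term $c/t^2$ (coming from the estimate of $J_{32}$, specifically the term $\int e^{-u^2/2}\frac{u_n^2}{s^2+u_n^2}\,du$), so they only deliver the asymptotic when $t\to\infty$; this is why their hypotheses require $t\in[t_n,o(\sqrt n)]$ with $t_n\to\infty$. Your plan applies them over the whole range $t\in[0,o(\sqrt n)]$, which fails for bounded $t$. The paper resolves this by splitting the range: for $t\in[0,o(n^{1/6})]$ (Theorem \ref{Thm-CramSca-02}) no change of measure or partition of unity is used at all. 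One exploits the elementary inequality $\log|\langle f,G_nv\rangle|\le\sigma(G_n,x)$ to get the upper bound in \eqref{LD general upper001} for free from the known cocycle expansion (Lemma \ref{Lem_Cramer_Cocy}), and for the lower bound one uses Lemma \ref{Lem_Regu_pi_s} at $s=0$ to control $\bb P(\log\delta(y,G_n\!\cdot\!x)\le -\ee k)$, shifts $t$ by $O(k/\sqrt n)$ with $k\asymp\log n$ or $k\asymp t^2$, and reduces again to the cocycle expansion. Only on the complementary range $t\in[n^\alpha,o(\sqrt n)]$ (Theorem \ref{Thm-Cram-Scalar-tag}) does the Fourier/partition-of-unity argument you describe come into play, and there too it is needed only for the \emph{lower} bound in \eqref{LD general upper001}, the upper bound being again free from $\log|\langle f,G_nv\rangle|\le\sigma(G_n,x)$.

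A second, smaller issue: your reduction of the lower tail \eqref{LD general lower001} to the upper tail via the dual action $g\mapsto{}^tg^{-1}$ does not work as stated---$\log|\langle f,G_nv\rangle|$ is not a coefficient of the dual walk in any direct way, and negating it does not produce an object of the same form. The paper instead runs the same change-of-measure argument with $s<0$ satisfying $\Lambda'(s)-\Lambda'(0)=-\sigma t/\sqrt n$, proves the companion Proposition \ref{KeyPropo-02}, and handles an extra tail piece $A_n''$ (the $\overline\chi_{M_n+1}^y$ part of the partition) via Lemma \ref{Lem_Regu_pi_s}; the asymmetry with the upper-tail case is that here the partition of unity must be carried as an equality rather than a one-sided inequality.
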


We mention that the uniformity in $t \in [0, o(\sqrt{n} )]$ means that for each sequence $a_n = o(\sqrt{n})$ of positive numbers,
the conclusion holds uniformly for all $t \leq a_n$. 
Accordingly, the reminder term $o(1)$ in 
 \eqref{LD general upper001} and \eqref{LD general lower001} may depend on 
the sequence $(a_n)$. 
The expansion \eqref{Ch7_Cramer_Entry_0a} follows from \eqref{LD general upper001} by taking $\varphi=1$.

Theorem \ref{Thm-Cram-Entry_bb} clearly implies the following moderate deviation principle
for the couple $(G_n \!\cdot\! x, \log |\langle f, G_n v \rangle|)$ 
with a target function $\varphi$ on the Markov chain $(G_n \!\cdot\! x)$:
under \ref{Ch7Condi-Moment} and \ref{Ch7Condi-IP}, 
for any sequence of positive numbers $(b_n)_{n\geq 1}$ satisfying
$\frac{b_n}{n} \to 0$ and $\frac{b_n}{\sqrt{n}} \to \infty$, 
any Borel set $B \subseteq \bb R$
and any real-valued function $\varphi \in \mathscr{B}_{\gamma}$ satisfying $\nu(\varphi) > 0$,  
we have that uniformly in $x = \bb R v \in \bb P(V)$, $v \in V$ and $f \in V^*$ with $\|v\| = \|f\| =1$, 
\begin{align*} 
-\inf_{t \in B^{\circ}} \frac{t^2}{2\sigma^2} 
& \leq \liminf_{n\to \infty} \frac{n}{b_n^{2}}
\log  \bb{E} \Big[  \varphi( G_n \!\cdot\! x )  
\mathds{1}_{  \big\{ \frac{ \log |\langle f, G_n v \rangle| - n\lambda_1 }{b_n} \in B  \big\} }  \Big]
\nonumber\\
& \leq   \limsup_{n\to \infty}\frac{n}{b_n^{2}}
\log  \bb{E}  \Big[   \varphi( G_n \!\cdot\! x ) 
\mathds{1}_{ \big\{ \frac{ \log |\langle f, G_n v \rangle| - n\lambda_1 }{b_n} \in B  \big\} }   \Big] 
\leq - \inf_{t \in \bar{B}} \frac{t^2}{2\sigma^2},
\end{align*}
where $B^{\circ}$ and $\bar{B}$ are respectively the interior and the closure of $B$.
This moderate deviation principle is new even for $\varphi = 1$.

\subsection{Local limit theorem with moderate deviations}
In this subsection we state 
the local limit theorem with moderate deviations and target functions
for the coefficients $\langle f, G_n v \rangle$.

\begin{theorem}\label{ThmLocal02}
Assume \ref{Ch7Condi-Moment} and \ref{Ch7Condi-IP}.
Then, 
there exists a constant $\gamma >0$ for any $\varphi \in \mathscr{B}_{\gamma}$
and directly Riemann integrable function $\psi$ with compact support on $\bb R$, we have, as $n \to \infty$, 
uniformly in $|t| = o(\sqrt{n})$, $x = \bb R v$, $v \in V$ and $f \in V^*$ with $\|v\| = \|f\| =1$, 
\begin{align}\label{LLT-Moderate-01}
& \mathbb{E}  \Big[ \varphi(G_n \!\cdot\! x)
 \psi \Big( \log| \langle f, G_n v \rangle | - n\lambda_1 - \sqrt{n}\sigma t \Big) 
    \Big]   
     = \frac{e^{ -\frac{t^2}{2} + \frac{t^3}{\sqrt{n}}
  \zeta(\frac{t}{\sqrt{n}} ) }}{\sigma \sqrt{2 \pi n}}  
  \left[ \nu(\varphi) \int_{\bb R} \psi(u) du  +  o(1) \right].
\end{align}
\end{theorem}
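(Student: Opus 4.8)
\medskip
\noindent\emph{Proof strategy.} The plan is to carry over the scheme used to prove Theorem~\ref{Thm-Cram-Entry_bb}, with the half-line indicator replaced by a bounded window: combine the decomposition \eqref{Basic-scalar-decom}, a smooth partition of the unity on $\bb P(V)$, the Cram\'er change of measure given by $P_s$ in \eqref{Def_Ps001}, and Fourier inversion, and then patch the resulting pieces using the uniform version of the exponential H\"older regularity of the invariant measure of the tilted Markov chain from \cite{GQX20,XGL22a}. As a preliminary reduction, by squeezing a directly Riemann integrable $\psi$ with compact support between continuous compactly supported functions $\psi^-\leq\psi\leq\psi^+$ with $\int_{\bb R}(\psi^+-\psi^-)<\ee$ and mollifying, it suffices to prove \eqref{LLT-Moderate-01} when $\psi\in\mathscr{C}_c^\infty(\bb R)$, so that $\widehat\psi$ is integrable, rapidly decreasing, and extends to an entire function with controlled growth on horizontal strips.

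Using \eqref{Basic-scalar-decom} with $y=\bb R f$, I would write
\[
\log|\langle f,G_nv\rangle|-n\lambda_1-\sqrt n\,\sigma t=\big(\sigma(G_n,x)-n\lambda_1-\sqrt n\,\sigma t\big)+\log\delta(y,G_n\!\cdot\! x),
\]
insert a partition of the unity $\sum_{k\geq1}\chi^y_{n,k}\equiv1$ on $\bb P(V)$ in the spirit of \cite{DKW21}, where each $\chi^y_{n,k}$ is smooth, supported in the region $\{\delta(y,\cdot)\asymp e^{-\ell_k}\}$ for a geometrically spaced sequence $\ell_k$, with $\|\chi^y_{n,k}\|_\gamma$ growing at most polynomially in $k$, and expand the left-hand side of \eqref{LLT-Moderate-01} as $\sum_{k\geq1}\bb E\big[(\varphi\chi^y_{n,k})(G_n\!\cdot\! x)\,\psi\big(\sigma(G_n,x)-n\lambda_1-\sqrt n\,\sigma t+\log\delta(y,G_n\!\cdot\! x)\big)\big]$. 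Since $\log\delta(y,\cdot)=-\ell_k+O(1)$ on $\supp\chi^y_{n,k}$, I would absorb the $O(1)$ oscillation into one-sided smooth modifications of $\psi$ (controlled through its modulus of continuity), reducing the $k$-th term, up to admissible errors, to $\bb E\big[(\varphi\chi^y_{n,k})(G_n\!\cdot\! x)\,\psi(\sigma(G_n,x)-b_{n,k})\big]$ with $b_{n,k}=n\lambda_1+\sqrt n\,\sigma t+\ell_k$.

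For fixed $k$, I would apply the change of measure attached to $P_s$ with $s=s_{n,k}$ chosen by the saddle-point equation $n\Lambda'(s_{n,k})=b_{n,k}$; as $\Lambda'(0)=\gamma_1=\lambda_1$ and $\Lambda''(0)=\gamma_2=\sigma^2>0$, the analytic implicit function theorem and Lemma~\ref{Ch7transfer operator} give $s_{n,k}\to0$ uniformly over the relevant range of $k$, with $s_{n,k}\sim\sigma t_k/\sqrt n$ and $t_k:=t+\ell_k/(\sigma\sqrt n)$. Introducing the Fourier perturbation $P_{s+\mathrm i\theta}$ of $P_s$ (replace $e^{s\sigma(g,x)}$ by $e^{(s+\mathrm i\theta)\sigma(g,x)}$ in \eqref{Def_Ps001}), Fourier inversion in the smooth target yields
\[
\bb E\big[(\varphi\chi^y_{n,k})(G_n\!\cdot\! x)\,\psi(\sigma(G_n,x)-b_{n,k})\big]=\frac{e^{-s_{n,k}b_{n,k}}}{2\pi}\int_{\bb R}\widehat\psi(\theta-\mathrm i s_{n,k})\,e^{-\mathrm i\theta b_{n,k}}\,P_{s_{n,k}+\mathrm i\theta}^{\,n}(\varphi\chi^y_{n,k})(x)\,d\theta,
\]
which I would evaluate by the spectral method: for $|\theta|>\eta$, quasi-compactness of $P_{s_{n,k}+\mathrm i\theta}$ and the absence of other eigenvalues of modulus $\kappa(s_{n,k})$ on the circle — a non-arithmeticity consequence of \ref{Ch7Condi-IP}, holding uniformly in $s_{n,k}$ near $0$ — make the contribution $O(\rho^n)$ with $\rho<1$; for $|\theta|\leq\eta$ use $P_{s_{n,k}+\mathrm i\theta}^{\,n}=\kappa(s_{n,k}+\mathrm i\theta)^n\Pi_{s_{n,k}+\mathrm i\theta}+O(\rho^n)$, Taylor-expand $\Lambda=\log\kappa$ and the eigenprojector $\Pi$ around $\mathrm i\theta=0$ (the saddle-point choice cancels the term linear in $\theta$), and integrate the Gaussian contribution from $|\theta|\lesssim n^{-1/2}$. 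Since $e^{-s_{n,k}b_{n,k}+n\Lambda(s_{n,k})}=e^{-n\Lambda^*(\lambda_1+\sigma t_k/\sqrt n)}$ expands, as in Petrov's computation of the Cram\'er series \eqref{Ch7Def-CramSeri}, into $e^{-t_k^2/2+(t_k^3/\sqrt n)\zeta(t_k/\sqrt n)}$ up to admissible errors, the $\theta$-integration contributes $(2\pi n\Lambda''(s_{n,k}))^{-1/2}$, one has $\widehat\psi(-\mathrm i s_{n,k})\to\int_{\bb R}\psi$, and $\Pi_{s_{n,k}+\mathrm i\theta}(\varphi\chi^y_{n,k})(x)=\nu_{s_{n,k}}(\varphi\chi^y_{n,k})+o(1)$ uniformly in $x$ as $\theta\to0$, so we obtain
\[
\bb E\big[(\varphi\chi^y_{n,k})(G_n\!\cdot\! x)\,\psi(\sigma(G_n,x)-b_{n,k})\big]=\frac{e^{-\frac{t_k^2}{2}+\frac{t_k^3}{\sqrt n}\zeta(t_k/\sqrt n)}}{\sigma\sqrt{2\pi n}}\Big[\nu_{s_{n,k}}(\varphi\chi^y_{n,k})\!\int_{\bb R}\!\psi(u)\,du+o(1)\Big],
\]
$\nu_s$ being the invariant probability of the $P_s$-tilted chain ($\nu_0=\nu$); this is the content of the key Propositions~\ref{KeyPropo} and \ref{KeyPropo-02}.

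Finally I would patch the pieces. For $k$ with $\ell_k\leq L$ ($L$ fixed), one has $t_k^2-t^2=2t\ell_k/(\sigma\sqrt n)+\ell_k^2/(\sigma^2n)\to0$, hence $e^{-t_k^2/2+(t_k^3/\sqrt n)\zeta(t_k/\sqrt n)}=e^{-t^2/2+(t^3/\sqrt n)\zeta(t/\sqrt n)}(1+o(1))$, and $\nu_{s_{n,k}}(\varphi\chi^y_{n,k})\to\nu(\varphi\chi^y_{n,k})$ as $s_{n,k}\to0$; summing these finitely many pieces and using $\sum_{\ell_k\leq L}\nu(\varphi\chi^y_{n,k})=\nu(\varphi)-O(e^{-\gamma_0L})$, where $\gamma_0>0$ is the H\"older exponent in $\nu_s(\{\delta(y,\cdot)\leq r\})\leq Cr^{\gamma_0}$ (uniform in $s$ near $0$ and in $y$ by \cite{GQX20,XGL22a}), recovers the asserted main term up to $O(e^{-\gamma_0L})$. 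The crux is then to bound the remaining tail $\sum_{\ell_k>L}$ by $o\big((\sigma\sqrt{2\pi n})^{-1}e^{-t^2/2+(t^3/\sqrt n)\zeta(t/\sqrt n)}\big)$ uniformly in $n$ and $t\in[0,o(\sqrt n)]$: this is the main obstacle, and it is precisely here that the \emph{uniform} exponential H\"older regularity is indispensable, since it simultaneously dominates the small mass $\nu_{s_{n,k}}(\{\delta(y,\cdot)\leq e^{-L}\})$ and the polynomial-in-$k$ growth of the H\"older norms entering the per-piece errors, making the series and its tail controllable. Letting $n\to\infty$ and then $L\to\infty$ yields \eqref{LLT-Moderate-01}; the uniformity in $t$, in $x=\bb R v$ (through $\Pi_0\varphi\equiv\nu(\varphi)$, which makes the spectral expansion uniform in $x$) and in $f$ (through the uniformity in $y$ of the partition and of the regularity estimates) persists at every step.
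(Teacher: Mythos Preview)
Your high-level strategy --- partition of the unity on $\bb P(V)$, Cram\'er change of measure, Fourier inversion, spectral expansion of the perturbed operator, and patching via the exponential H\"older regularity of the tilted invariant measure --- is the same as the paper's. Two points deserve correction, however.

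First, a structural difference: you choose a $k$-dependent saddle point $s_{n,k}$ solving $n\Lambda'(s_{n,k})=b_{n,k}$. The paper instead fixes a single $s$ depending only on $t$ via \eqref{Saddle_Equa_cc}, and absorbs the shift $ak$ into the argument of $\Psi_s(w)=e^{-sw}\psi(w)$; the resulting Fourier integral is then handled by Lemma~\ref{Lem_R_st_limit} with $l=ak/n$. Your per-piece re-centering could be made to work, but the patching becomes more delicate (you must track the dependence of the $o(1)$ on $k$ through $t_k$ and $s_{n,k}$), whereas with a single $s$ the sum over $k$ is straightforwardly dominated and interchanged with the limit. Note also that the change of measure \eqref{Ch7basic equ1} produces the eigenfunction factor $r_s^{-1}$, so the functions entering the spectral integral are $\varphi r_s^{-1}\chi^y_k$, not $\varphi\chi^y_k$.

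Second, and this is a genuine misidentification: you attribute the per-piece local asymptotic to Propositions~\ref{KeyPropo} and \ref{KeyPropo-02}. Those propositions treat integrals against $\widehat{\psi}^-_{s,\ee}$ and $\widehat{\phi}^+_{s,\ee}$, the Fourier transforms of smoothed half-line indicators, and are tailored to the tail expansions of Theorem~\ref{Thm-Cram-Entry_bb}; the normalization $s\sigma_s\sqrt{n}$ there reflects the fact that $\widehat{\psi}^-_{s,\ee}(0)\sim 1/s$. For a compactly supported $\psi$ the correct tool is Lemma~\ref{Lem_R_st_limit}, which directly gives the asymptotic of $\sigma_s\sqrt{n}\int_{\bb R}e^{-iuln}R^n_{s,iu}(\varphi)(x)\psi(u)\,du$ with no factor of $s$. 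Your spectral sketch contains the right ingredients for proving such a lemma, but you have pointed to the wrong black box. Finally, for the tail piece $J_1$ when $s<0$ one must control $e^{-sT_n^x}\leq e^{-sb_2}e^{sY_n^{x,y}}$, and the paper combines Lemma~\ref{Lem_Regu_pi_s} with Lemma~\ref{Lem_Inte_Regu_a} (via H\"older's inequality) to do so; your sketch does not distinguish the two signs of $s$ and omits this step.
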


In particular taking $\varphi=1$  and $\psi = \mathds 1_{[a_1,a_2]}$, where $a_1 < a_2$,
we get the asymptotic \eqref{LLT-MD-Intro}.
The asymptotic 
\eqref{LLT-Moderate-01} improves the recent results obtained in \cite{GQX20} and \cite{DKW21}: 
the result in \cite{GQX20} corresponds to the case when $t=0$, $\varphi=1$  and $\psi = \mathds 1_{[a_1,a_2]}$, 
and that in \cite{DKW21} to the case when $t = o(1)$, $\varphi=1$  and $\psi = \mathds 1_{[a_1,a_2]}$.

\section{Proof of Cram\'er type moderate deviation expansion} 

\subsection{Spectral gap properties and a change of measure}\label{subsec-Pz}
Denote by $B_{s_0}(0): = \{ z \in \bb{C}: |z| < s_0 \}$ 
the open disc with center $0$ and radius $s_0 >0$ in the complex plane $\bb C$. 
For any $z \in \bb{C}$ with $|\Re z| < s_0$ small enough
and any bounded measurable function $\varphi$ on $\bb P(V)$, 
consider 
\begin{align}\label{Def_Pz_Ch7}
P_z \varphi(x) = \int_{\textup{GL}(V)} e^{z \sigma(g, x)} \varphi(g \!\cdot\! x) \mu(dg),  
\quad  x \in \bb P(V). 
\end{align}
The following result  
shows that complex transfer operator $P_z$ has spectral gap properties.

\begin{lemma}[\cite{BQ16b, XGL19b}]  \label{Ch7transfer operator}
Assume \ref{Ch7Condi-Moment} and \ref{Ch7Condi-IP}.  
Then, there exist constants $\gamma>0$ and $s_0 >0$ such that for any $z \in B_{s_0}(0)$ and $n \geq 1$, 
\begin{align}\label{Ch7Pzn-decom}
P_z^n = \kappa^n(z) \nu_z \otimes r_z + L_z^n, 
\end{align}
where 
\begin{align*}
z \mapsto \kappa(z) \in \bb{C}, \quad   z \mapsto r_z \in \mathscr{B}_{\gamma} , 
\quad   z \mapsto \nu_z \in \mathscr{B}_{\gamma}' , 
\quad   z \mapsto  L_z \in \mathscr{L(B_{\gamma},B_{\gamma})}
\end{align*}
are analytic mappings which satisfy, for any $z \in B_{s_0}(0)$, 

\begin{itemize}
\item[{\rm(a)}]
    the operator $M_z: = \nu_z \otimes r_z$ is a rank one projection on $\mathscr{B}_{\gamma}$,
    i.e. $M_z \varphi = \nu_z(\varphi) r_z$ for any $\varphi \in \mathscr{B}_{\gamma}$; 

\item[{\rm(b)}]
 $M_z L_z = L_z M_z =0$,  $P_z r_z = \kappa(z) r_z$ with $\nu(r_z) = 1$, and  $\nu_z P_z = \kappa(z) \nu_z$;

\item[{\rm(c)}]
    $\kappa(0) = 1$, $r_0 = 1$, $\nu_0 = \nu$ with $\nu$ defined by \eqref{Ch7mu station meas}, and 
    $\kappa(z)$ and $r_z$ are strictly positive for real-valued $z \in (-s_0, s_0)$.    
\end{itemize}
\end{lemma}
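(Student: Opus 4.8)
The plan is to establish Lemma~\ref{Ch7transfer operator} as a standard consequence of quasi-compactness of the transfer operator $P_z$ on the H\"older space $\mathscr{B}_\gamma$, combined with analytic perturbation theory. First I would recall, citing \cite{LeP82, BQ16b, XGL19b}, that under \ref{Ch7Condi-Moment} and \ref{Ch7Condi-IP} the real operator $P_0$ (the Markov operator of the chain $G_n\!\cdot\!x$) acts on $\mathscr{B}_\gamma$ for $\gamma>0$ small enough, is quasi-compact, and has $1$ as a simple isolated dominant eigenvalue with one-dimensional eigenspace spanned by the constant function $r_0=1$ and corresponding eigenfunctional $\nu_0=\nu$, the unique stationary measure of \eqref{Ch7mu station meas}; the rest of the spectrum lies in a disc of radius $\rho<1$. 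The key analytic input is a Doeblin--Fortet (Lasota--Yorke) inequality: there exist $n_0\geq 1$, $0<\rho'<1$ and $C>0$ with $\|P_z^{n_0}\varphi\|_\gamma \leq \rho'\|\varphi\|_\gamma + C\|\varphi\|_\infty$ for all $z$ in a fixed neighborhood $B_{s_0}(0)$, which follows from the contraction properties of the projective action (the cocycle $\sigma(g,x)$ and the action $g\!\cdot\!x$ are $\gamma$-H\"older with good moment bounds coming from \ref{Ch7Condi-Moment}), together with compactness of $\bb P(V)$; this gives quasi-compactness of $P_z$ uniformly in $z$.

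Next I would invoke the analyticity of $z\mapsto P_z$ as a map $B_{s_0}(0)\to\mathscr{L(B_\gamma,B_\gamma)}$, which is immediate from differentiating under the integral sign in \eqref{Def_Pz_Ch7} — legitimate because $e^{z\sigma(g,x)}$ has all its $z$-derivatives dominated, locally uniformly in $z$, by powers of $|\sigma(g,x)|\leq \log N(g)$ times $N(g)^{|\Re z|}$, which are $\mu$-integrable for $|\Re z|$ small by \ref{Ch7Condi-Moment}. Since $P_0$ has a simple isolated eigenvalue $1$ separated from the rest of its spectrum, the Kato--Rellich analytic perturbation theorem applies: for $s_0$ small enough each $P_z$ has a simple isolated dominant eigenvalue $\kappa(z)$ with $\kappa(0)=1$, a spectral projector $M_z=\nu_z\otimes r_z$ of rank one, and complementary part $L_z=(P_z - \kappa(z)M_z)$ acting on $\ker M_z$ with spectral radius $\leq\rho''<|\kappa(z)|$, all depending analytically on $z$. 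The Riesz projector formula $M_z=\frac{1}{2\pi i}\oint (\zeta - P_z)^{-1}d\zeta$ around a fixed small circle enclosing $1$ gives the analyticity of $M_z$, hence (after normalizing) of $r_z$ and $\nu_z$; writing $P_z^n = \kappa^n(z)M_z + L_z^n$ and using $M_zL_z=L_zM_z=0$ yields \eqref{Ch7Pzn-decom}. The normalizations $\nu(r_z)=1$ (equivalently $\nu_0(r_z)$, using $\nu_0=\nu$) and the identities $P_zr_z=\kappa(z)r_z$, $\nu_zP_z=\kappa(z)\nu_z$, $M_z\varphi=\nu_z(\varphi)r_z$ are then just the defining relations of the eigenprojection.

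Finally, for item (c): $r_0=1$, $\kappa(0)=1$, $\nu_0=\nu$ hold by construction, and strict positivity of $\kappa(z)$ and $r_z$ for real $z\in(-s_0,s_0)$ follows by a perturbation-plus-positivity argument — for real $s$ the operator $P_s$ is positivity-improving on $\mathscr{C}(\bb P(V))$ (by \ref{Ch7Condi-IP}, exactly as for $P_0$), so its dominant eigenfunction $r_s$ must be a positive function and $\kappa(s)>0$; alternatively one notes $r_s\to 1$ and $\kappa(s)\to 1$ as $s\to 0$ in $\mathscr{B}_\gamma\hookrightarrow\mathscr{C}(\bb P(V))$, so positivity persists on a possibly smaller interval, and then bootstraps on all of $(-s_0,s_0)$ via the eigen-equation. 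The main obstacle is the uniform (in $z$) Doeblin--Fortet inequality on $\mathscr{B}_\gamma$: one must choose $\gamma$ small enough and control the H\"older seminorm of $x\mapsto e^{z\sigma(g,x)}\varphi(g\!\cdot\!x)$ after iteration, which requires the quantitative contraction estimate $\bb E[d(G_n\!\cdot\!x, G_n\!\cdot\!x')^\gamma]\leq C\rho^n d(x,x')^\gamma$ together with moment control on $e^{\gamma\,|\sigma(G_n,x)|}$-type quantities — but this is precisely the content already established in \cite{LeP82, BQ16b, XGL19b}, so here it can simply be cited.
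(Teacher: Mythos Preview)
The paper does not give its own proof of this lemma: it is stated as a citation from \cite{BQ16b, XGL19b} and used as a black box. Your sketch is correct and is exactly the standard argument carried out in those references (quasi-compactness via a Doeblin--Fortet inequality on $\mathscr{B}_\gamma$, analyticity of $z\mapsto P_z$ from \ref{Ch7Condi-Moment}, then Kato--Rellich perturbation theory around the simple dominant eigenvalue of $P_0$), so there is nothing to compare.
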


Lemma \ref{Ch7transfer operator} allows us to obtain
a change of measure formula, under conditions \ref{Ch7Condi-Moment} and \ref{Ch7Condi-IP}. 
For any $x \in \bb P(V)$, $g \in \textup{GL}(V)$ and $s \in (-s_0, s_0)$ with $s_0>0$ sufficiently small, 
let
\begin{align*}
q_n^s(x, g) = \frac{ e^{s \sigma(g, x) } }{ \kappa^{n}(s) } \frac{ r_s(g \!\cdot\! x) }{ r_s(x) },
\quad  n \geq 1. 
\end{align*}
Using $P_s r_s = \kappa(s) r_s$ and the fact that $\kappa(s)$ and $r_s$ are strictly positive for $s \in (-s_0, s_0)$, 
we get that the probability measures
\begin{align*}
\bb Q_{s,n}^x (dg_1, \ldots, dg_n) = q_n^s(x, G_n) \mu(dg_1) \ldots \mu(dg_n),  \quad  n \geq 1, 
\end{align*}
form a projective system on $\textup{GL}(V)^{\bb{N}}$. 
Therefore, 
by the Kolmogorov extension theorem, 
there is a unique probability measure  $\bb Q_s^x$ on $\textup{GL}(V)^{\bb{N}}$ with marginals $\bb Q_{s,n}^x$. 
We write $\bb{E}_{\bb Q_s^x}$ for the corresponding expectation 
and the change of measure formula holds: 
for any $s \in (-s_0, s_0)$, $x \in \bb P(V)$, 
$n\geq 1$ and bounded measurable function $h$, 
\begin{align}\label{Ch7basic equ1}
&  \frac{1}{ \kappa^{n}(s) r_{s}(x) } \bb{E}  \Big[   
r_{s}(G_n \!\cdot\! x) e^{s \sigma(G_n, x) } h \Big( G_1 \!\cdot\! x, \sigma(G_1, x), \dots, G_n \!\cdot\! x, \sigma(G_n, x) \Big)
  \Big]   \nonumber\\
&  =   \bb{E}_{\bb{Q}_{s}^{x}} \Big[ h \Big( G_1 \!\cdot\! x, \sigma(G_1, x), \dots, G_n \!\cdot\! x, \sigma(G_n, x) \Big) \Big].
\end{align}
Under the changed measure $\bb Q_s^x$, the process $(G_n \!\cdot\! x)_{n \geq 0}$ is a Markov chain 
with the transition operator $Q_s$ given as follows: for any $\varphi \in \mathscr{C}(\bb P(V))$, 
\begin{align*}
Q_{s}\varphi(x) = \frac{1}{\kappa(s)r_{s}(x)}P_s(\varphi r_{s})(x),  \quad   x \in \bb P(V).
\end{align*}
By \cite{XGL19b}, 
the Markov operator $Q_s$ has a unique invariant probability measure $\pi_s$ given by 
\begin{align}\label{ExpCon-Qs}
\pi_s(\varphi) = \frac{ \nu_s( \varphi r_s) }{ \nu_s(r_s) }  \quad  \mbox{for any } \varphi \in \mathscr{C}(\bb P(V)).
\end{align}
We shall need the following property on the changed measure $\bb Q_s^x$. 

\begin{lemma}[\cite{XGL22a}]  \label{Lem_Regu_pi_s}
Assume \ref{Ch7Condi-Moment} and \ref{Ch7Condi-IP}. 
Then, for any $\ee >0$, there exist constants $s_0 >0$ and $c, C >0$ such that
for all $s \in (-s_0, s_0)$, $n \geq k \geq 1$, $x \in \bb P(V)$ and $y \in \bb P(V^*)$, 
\begin{align}\label{Regu_pi_s}
\bb Q_s^x \Big( \log \delta(y, G_n \!\cdot\! x) \leq -\ee k  \Big) \leq C e^{- ck}. 
\end{align}
\end{lemma}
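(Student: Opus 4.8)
The plan is to estimate the probability $\bb Q_s^x(\log\delta(y,G_n\!\cdot\!x)\le -\ee k)$ by first converting it to an expectation under the original measure $\bb P$ via the change of measure formula \eqref{Ch7basic equ1}, and then exploiting the fact that, under the original measure, the Markov chain $(G_n\!\cdot\!x)$ has a spectral gap and the invariant measure $\nu$ gives small mass to small neighborhoods of the hyperplane $\{z\in\bb P(V):\langle f,\cdot\rangle=0\}$ where $y=\bb R f$. More precisely, applying \eqref{Ch7basic equ1} with $h$ depending only on the last coordinate $G_n\!\cdot\!x$ and on $\sigma(G_n,x)$, one gets
\begin{align*}
\bb Q_s^x\Big(\log\delta(y,G_n\!\cdot\!x)\le -\ee k\Big)
=\frac{1}{\kappa^n(s)r_s(x)}\bb E\Big[r_s(G_n\!\cdot\!x)\,e^{s\sigma(G_n,x)}\,\mathds 1_{\{\log\delta(y,G_n\!\cdot\!x)\le-\ee k\}}\Big].
\end{align*}
Since $r_s$ is bounded above and below by positive constants and $\kappa(s)$ is bounded away from $0$ on $(-s_0,s_0)$ (Lemma \ref{Ch7transfer operator}(c)), it suffices to bound $\kappa^{-n}(s)\,\bb E\big[e^{s\sigma(G_n,x)}\mathds 1_{\{\delta(y,G_n\!\cdot\!x)\le e^{-\ee k}\}}\big]$, and after a Cauchy--Schwarz or Hölder split one is reduced to controlling, uniformly in $x$ and $y$, the quantity $P_{2s}^n\big(\mathds 1_{B(y,e^{-\ee k})}\big)(x)/\kappa^n(2s)$ (up to adjusting $\ee$ and $s_0$), where $B(y,r)=\{z:\delta(y,z)\le r\}$.

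Next I would invoke the spectral decomposition \eqref{Ch7Pzn-decom} for $P_{2s}$: the dominant term contributes $\nu_{2s}\big(\mathds 1_{B(y,e^{-\ee k})}\big)\,r_{2s}(x)$, and the error term $L_{2s}^n$ decays geometrically in the operator norm on $\mathscr B_\gamma$. The indicator $\mathds 1_{B(y,r)}$ is not itself in $\mathscr B_\gamma$, so one smooths it by a $\gamma$-Hölder bump function $\chi$ supported on $B(y,2r)$, equal to $1$ on $B(y,r)$, with $\|\chi\|_\gamma\le C r^{-\gamma}$; the $L_{2s}^n$ contribution is then $\le C r^{-\gamma}\rho^n$ for some $\rho<1$, which for $n\ge k$ and $r=e^{-\ee k}$ is $\le C e^{\gamma\ee k}\rho^k$, and this is $\le Ce^{-c'k}$ provided $\ee$ is chosen small enough relative to $-\log\rho$ and $\gamma$. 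The main term requires the regularity estimate $\nu_{2s}\big(B(y,r)\big)\le C r^{\alpha}$ for some $\alpha>0$, uniformly in $y\in\bb P(V^*)$ and $s$ small — this is exactly the Hölder regularity of the (perturbed) invariant measure on projective space with respect to distance to a hyperplane, which is the content of the results of \cite{GQX20, XGL22a} referenced in the introduction. With $r=e^{-\ee k}$ this gives $\le C e^{-\alpha\ee k}$, again of the required form.

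The main obstacle is making all the constants \emph{uniform} in the starting point $x\in\bb P(V)$, the target $y\in\bb P(V^*)$, and the parameter $s\in(-s_0,s_0)$ simultaneously. Uniformity in $x$ follows from the rank-one structure and the uniform positivity of $r_s$; uniformity in $s$ from analyticity of $s\mapsto(\kappa(s),r_s,\nu_s,L_s)$ on $B_{s_0}(0)$ and compactness, after possibly shrinking $s_0$. The genuinely delicate point is the uniform-in-$y$ Hölder bound $\sup_{y}\nu_{s}(B(y,r))\le Cr^\alpha$ for the family of measures $\nu_s$ (or $\pi_s$); this is a quantitative non-concentration statement for the stationary measure under the changed measure, and I would invoke it directly from \cite{XGL22a} rather than reprove it — the argument there proceeds by a regeneration/submultiplicativity scheme combined with the proximality in \ref{Ch7Condi-IP}. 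Once that input is granted, assembling the two contributions and choosing $\ee$ (hence $s_0$) small enough to absorb the $r^{-\gamma}$ loss against the geometric decay completes the proof.
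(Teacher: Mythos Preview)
The paper does not give its own proof of this lemma; it is quoted verbatim as a result of \cite{XGL22a} and used as a black box. So there is no in-paper argument to compare against.

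Your sketch is essentially the standard route to such a statement and is correct in outline, but two points deserve comment. First, the Cauchy--Schwarz/H\"older split and the passage to $P_{2s}$ are unnecessary detours: since the event depends only on $G_n\!\cdot\!x$, the change of measure already gives
\[
\bb Q_s^x\big(\delta(y,G_n\!\cdot\!x)\le e^{-\ee k}\big)=\frac{1}{\kappa^n(s)r_s(x)}\,P_s^n\big(r_s\,\mathds 1_{B(y,e^{-\ee k})}\big)(x)\le \frac{C}{\kappa^n(s)}\,P_s^n\big(\mathds 1_{B(y,e^{-\ee k})}\big)(x),
\]
and you can apply the spectral decomposition \eqref{Ch7Pzn-decom} for $P_s$ directly (after your H\"older mollification of the indicator). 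Second, your closing sentence ``choosing $\ee$ small enough'' is slightly at odds with the quantifier order in the lemma, where $\ee>0$ is \emph{given}. This is harmless: the events are monotone in $\ee$, so proving the bound for one sufficiently small $\ee_0>0$ yields it for all $\ee\ge\ee_0$ with the same constants; alternatively one shrinks $\gamma$ in the mollifier estimate $\|\chi\|_\gamma\le Cr^{-\gamma}$ so that $a\,e^{\gamma\ee}<1$. With these adjustments the reduction goes through; as you note, the substantive input that remains is the uniform H\"older regularity $\sup_{y}\nu_s(B(y,r))\le Cr^{\alpha}$, which is precisely what is established in \cite{XGL22a} and which the present paper also imports.
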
 

Note that \eqref{Regu_pi_s} implies the H\"{o}lder regularity of the invariant measure $\pi_s$: 
there exist constants $s_0 >0$ and $\eta > 0$ such that
\begin{align*} 
\sup_{ s\in (-s_0, s_0) } \sup_{y \in \bb P(V^*) } 
  \int_{\bb P(V) } \frac{1}{ \delta(y, x)^{\eta} } \pi_s(dx) < + \infty. 
\end{align*}

It is shown in \cite{XGL19b} that the strong law of large numbers 
for the norm cocycle $\sigma(G_n, x)$ under the measure $\bb Q_s^x$ holds: 
for any $s\in (-s_0, s_0)$ and $x \in \bb P(V)$, 
\begin{align*}
\lim_{n \to \infty} \frac{ \sigma(G_n, x) }{n} = \Lambda'(s),  \quad  \bb Q_s^x\mbox{-a.s.}
\end{align*}
where $\Lambda(s) = \log \kappa(s)$. 
For any $s\in (-s_0, s_0)$, $u \in \bb R$ and $\varphi \in \mathscr{C}(\bb P(V))$, 
define 
\begin{align} \label{Def_Rsz_Ch7}
R_{s, iu} \varphi(x)
= \bb{E}_{\bb{Q}_{s}^{x}} \left[ e^{iu( \sigma(g, x) - \Lambda'(s) )} \varphi(g \!\cdot\! x) \right],  
\quad   x \in \bb P(V). 
\end{align}
From the cocycle property of $\sigma(\cdot, \cdot)$, it follows that for any $n \geq 1$, 
\begin{align}\label{Def_Rsz-nth}
R^{n}_{s, iu} \varphi(x)
= \bb{E}_{\bb{Q}_{s}^{x}} \left[ e^{iu( \sigma(G_n, x) - n\Lambda'(s) )} \varphi(G_n \!\cdot\! x) \right],
\quad    x \in \bb P(V).
\end{align}
Now we give the spectral gap properties of the perturbed operator $R_{s, iu}$.

\begin{lemma}[\cite{XGL19b}] \label{Ch7perturbation thm}
Assume \ref{Ch7Condi-Moment} and \ref{Ch7Condi-IP}.  
Then, there exist constants $\gamma>0$, $s_0 >0$ and $\delta > 0$ 
such that for any $s \in (-s_0, s_0)$ and $u \in  (-\delta, \delta)$,
\begin{align} \label{Ch7perturb001}
R^{n}_{s, iu} = \lambda^{n}_{s, iu} \Pi_{s, iu} + N^{n}_{s, iu},  
\end{align}
where
\begin{align}\label{relationlamkappa001}
\lambda_{s, iu} = e^{ \Lambda(s + iu) - \Lambda(s) - iu \Lambda'(s)}, 
\end{align}
and for fixed $s \in (-s_0, s_0)$, the mappings $u \mapsto \Pi_{s, iu}: (-\delta, \delta) \to \mathscr{L(B_{\gamma},B_{\gamma})}$,
$u \mapsto N_{s, iu}: (-\delta, \delta) \to \mathscr{L(B_{\gamma},B_{\gamma})}$
and $u \mapsto \lambda_{s, iu}: (-\delta, \delta) \to \bb{R}$ are analytic. 
In addition, for fixed $s$ and $u$, the operator $\Pi_{s,iu}$ is a rank-one projection with 
$\Pi_{s, 0}(\varphi)(x) = \pi_{s}(\varphi)$ for any $\varphi \in \mathscr{B}_{\gamma}$ and $x\in \bb P(V)$,
and $\Pi_{s,iu} N_{s, iu} = N_{s,iu} \Pi_{s,iu} = 0$.  
Moreover,  for any $k \in  \bb{N}$,  there exist constants $c > 0$ and $0< a <1$ such that
for any $s \in (-s_0, s_0)$ and $u \in  (-\delta, \delta)$, 
\begin{align} \label{Ch7SpGapContrN}
\Big\| \frac{d^{k}}{du^{k}} \Pi^{n}_{s, iu}  \Big\|_{\mathscr{B}_{\gamma} \to \mathscr{B}_{\gamma}} 
\leq c,  
\quad
\Big\| \frac{d^{k}}{du^{k}}N^{n}_{s, iu}  \Big\|_{\mathscr{B}_{\gamma} \to \mathscr{B}_{\gamma}} 
\leq  c  a^n.    
\end{align}
\end{lemma}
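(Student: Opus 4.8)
The plan is to reduce the statement to Lemma~\ref{Ch7transfer operator} by conjugating the complex transfer operator $P_{s+iu}$ with the strictly positive eigenfunction $r_s$, so that no new perturbation argument is needed.

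First I would identify $R_{s,iu}$ in terms of $P_{s+iu}$. Applying the change of measure formula \eqref{Ch7basic equ1} with $n=1$ and $h(x',t')=e^{iu(t'-\Lambda'(s))}\varphi(x')$ gives, for every $x\in\bb P(V)$,
\[
R_{s,iu}\varphi(x)=\frac{e^{-iu\Lambda'(s)}}{\kappa(s)\,r_s(x)}\int_{\GL(V)}e^{(s+iu)\sigma(g,x)}(\varphi\,r_s)(g\cdot x)\,\mu(dg)=\frac{e^{-iu\Lambda'(s)}}{\kappa(s)}\,\frac{1}{r_s}\,P_{s+iu}(\varphi\,r_s)(x).
\]
Since $\mathscr{B}_{\gamma}$ is a Banach algebra and $r_s,1/r_s\in\mathscr{B}_{\gamma}$ (by Lemma~\ref{Ch7transfer operator}(c), $r_s$ is continuous and strictly positive on the compact space $\bb P(V)$), the map $\varphi\mapsto r_s^{-1}P_{s+iu}(\varphi\,r_s)$ is a bounded operator on $\mathscr{B}_{\gamma}$, and iterating the previous identity yields $R_{s,iu}^n\varphi=e^{-inu\Lambda'(s)}\kappa^{-n}(s)\,r_s^{-1}P_{s+iu}^n(\varphi\,r_s)$ for all $n\ge1$; in particular \eqref{Def_Rsz-nth} follows from the cocycle property.

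Next I would insert the spectral decomposition \eqref{Ch7Pzn-decom}. After shrinking $s_0$ and choosing $\delta>0$ so that $s+iu\in B_{s_0}(0)$ for all $|s|<s_0$, $|u|<\delta$, and $|\kappa|>\tfrac12$ on this set (so that $\Lambda:=\log\kappa$ and $\Lambda'$ are analytic near $0$), substituting \eqref{Ch7Pzn-decom} and collecting scalar factors gives exactly \eqref{Ch7perturb001}--\eqref{relationlamkappa001}, with
\[
\lambda_{s,iu}=e^{\Lambda(s+iu)-\Lambda(s)-iu\Lambda'(s)},\qquad \Pi_{s,iu}\varphi:=\frac{1}{r_s}M_{s+iu}(\varphi\,r_s)=\nu_{s+iu}(\varphi\,r_s)\,\frac{r_{s+iu}}{r_s},\qquad N_{s,iu}\varphi:=\frac{e^{-iu\Lambda'(s)}}{\kappa(s)}\,\frac{1}{r_s}\,L_{s+iu}(\varphi\,r_s),
\]
the power $N_{s,iu}^n$ again being the conjugate of $L_{s+iu}^n$. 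The qualitative properties are then read off: $\Pi_{s,iu}$ is manifestly rank one, and it is a projection because $\nu_z(r_z)=1$ on $B_{s_0}(0)$ (a consequence of $M_z^2=M_z$ in Lemma~\ref{Ch7transfer operator}(a)), whence $\Pi_{s,iu}^2=\Pi_{s,iu}$; conjugating $M_{s+iu}L_{s+iu}=L_{s+iu}M_{s+iu}=0$ by $r_s$ gives $\Pi_{s,iu}N_{s,iu}=N_{s,iu}\Pi_{s,iu}=0$; and at $u=0$ we get $\Pi_{s,0}\varphi=\nu_s(\varphi\,r_s)=\pi_s(\varphi)$ by \eqref{ExpCon-Qs} together with $\nu_s(r_s)=1$. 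Analyticity of $u\mapsto\lambda_{s,iu},\Pi_{s,iu},N_{s,iu}$ for fixed $s$ is inherited from the analyticity of $z\mapsto\kappa(z),r_z,\nu_z,L_z$ and of $\Lambda$ near $0$, composed with the fixed bounded conjugation operator.

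Finally I would prove the uniform derivative bounds \eqref{Ch7SpGapContrN}. Since $\Pi_{s,iu}$ is a projection, $\Pi_{s,iu}^n=\Pi_{s,iu}$ is independent of $n$, and its $u$-derivatives are bounded uniformly in $s\in(-s_0,s_0)$ by compactness of $[-s_0,s_0]$ and joint continuity of the analytic data. For $N_{s,iu}^n$, shrinking the complex neighborhood of $0$ until $\sup_z$ of the spectral radius of $L_z$ is strictly below $\inf_z|\kappa(z)|$, the spectral gap in Lemma~\ref{Ch7transfer operator} gives $\|L_z^n\|_{\mathscr{B}_{\gamma}\to\mathscr{B}_{\gamma}}\le C\theta^n$ with $\theta<\inf_z|\kappa(z)|$, uniformly for $z$ in that neighborhood; applying Cauchy's integral formula to the operator-valued analytic map $u\mapsto L_{s+iu}^n$ preserves this decay for every $u$-derivative (up to a factor $k!\,\rho^{-k}$), and the polynomial-in-$n$ factors produced by differentiating $e^{-inu\Lambda'(s)}$ are absorbed into a slightly larger base $a<1$. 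This yields $\|\frac{d^k}{du^k}N_{s,iu}^n\|_{\mathscr{B}_{\gamma}\to\mathscr{B}_{\gamma}}\le c\,a^n$ uniformly in $s,u$. I expect this last step --- propagating the spectral gap estimate and its $u$-derivatives \emph{uniformly in $s$} --- to be the only genuinely delicate point; the rest is bookkeeping around the conjugation identity.
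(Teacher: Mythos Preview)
The paper does not give a proof of this lemma; it is quoted verbatim from \cite{XGL19b}. Your derivation via the conjugation identity
\[
R_{s,iu}^n\varphi=\frac{e^{-inu\Lambda'(s)}}{\kappa^n(s)\,r_s}\,P_{s+iu}^n(\varphi\,r_s)
\]
and substitution of the decomposition \eqref{Ch7Pzn-decom} is correct and is the standard route (and essentially what is done in \cite{XGL19b}): it reduces Lemma~\ref{Ch7perturbation thm} to the spectral gap Lemma~\ref{Ch7transfer operator} for $P_z$, with no additional perturbation theory needed. Your identification of $\Pi_{s,iu}$, $N_{s,iu}$, the checking of the projection and annihilation properties, the evaluation at $u=0$ via $\nu_s(r_s)=1$, and the analyticity are all fine.

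The only point that deserves a caveat is the uniform bound $\|L_z^n\|_{\mathscr{B}_{\gamma}\to\mathscr{B}_{\gamma}}\le C\theta^n$ with $\theta<\inf_{|s|<s_0}\kappa(s)$ on a complex neighborhood of $0$: Lemma~\ref{Ch7transfer operator} as stated in this paper gives the decomposition \eqref{Ch7Pzn-decom} but does not spell out this quantitative estimate; it is, however, part of the spectral gap package in \cite{XGL19b} (and in \cite{LeP82}, \cite{BQ16b}), so invoking it is legitimate. Once that is granted, your absorption of the polynomial factors $n^j$ coming from $\partial_u^j e^{-inu\Lambda'(s)}$ into a slightly larger base $a\in(\theta/\inf_s\kappa(s),1)$ is the correct way to obtain \eqref{Ch7SpGapContrN}.
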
 

Now we give the non-arithmetic property of the perturbed operator $R_{s,iu}$. 

\begin{lemma}[\cite{XGL19b}] \label{Lem-St-NonLatt}
Assume \ref{Ch7Condi-Moment} and \ref{Ch7Condi-IP}.  
Then, for any compact set $K \subseteq \mathbb{R}\backslash\{0\}$,
there exist constants $\gamma, s_0, c_1, c_2>0$ such that 
for any $n \geq 1$ and $\varphi\in \mathscr{B}_{\gamma}$,
\begin{align*} 
\sup_{s \in (-s_0, s_0)} \sup_{u\in K} \sup_{x\in \bb P(V)}
|R^{n}_{s, iu}\varphi(x)| \leq  c_1 e^{- c_2 n} \|\varphi \|_{\gamma}.
\end{align*}
\end{lemma}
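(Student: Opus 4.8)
The plan is to reduce the claim to a uniform bound on the operator norms $\|R^n_{s,iu}\|_{\mathscr{B}_\gamma\to\mathscr{B}_\gamma}$ (legitimate since $\|\cdot\|_\infty\le\|\cdot\|_\gamma$), and to obtain such a bound from quasi-compactness together with a \emph{strict} spectral radius estimate $\rho(R_{s,iu})<1$, upgraded by a compactness argument over $(s,u)$. By \eqref{Ch7basic equ1} applied with $n=1$ one has $R_{s,iu}\varphi=e^{-iu\Lambda'(s)}\kappa(s)^{-1}r_s^{-1}P_{s+iu}(\varphi r_s)$, so for a suitably small $\gamma>0$ the operator $R_{s,iu}$ acts boundedly on $\mathscr{B}_\gamma$ for all $|s|<s_0$ and $u$ in any compact set, and $(s,u)\mapsto R_{s,iu}$ is continuous into $\mathscr{L(B_\gamma,B_\gamma)}$. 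The first step will be to record a uniform Doeblin--Fortet (Lasota--Yorke) inequality $\|R^n_{s,iu}\varphi\|_\gamma\le Ca^n\|\varphi\|_\gamma+C\|\varphi\|_\infty$ with $C\ge1$, $a\in(0,1)$ independent of $s\in(-s_0,s_0)$ and $u\in K$; this follows from the standard projective-contraction estimates, with compactness of $K$ used to absorb the linear dependence on $|u|$ coming from the factor $e^{iu\sigma}$. By Hennion's theorem each $R_{s,iu}$ is then quasi-compact on $\mathscr{B}_\gamma$ with essential spectral radius at most $a$, and since $|R^n_{s,iu}\varphi|\le Q_s^n|\varphi|$ pointwise with $Q_s$ Markov, the spectral radius satisfies $\rho(R_{s,iu})\le1$.

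The core of the argument will be to show $\rho(R_{s,iu})<1$ for every fixed $s\in(-s_0,s_0)$ and $u\in K$ (so $u\neq0$). Assume not; then quasi-compactness produces a unimodular eigenvalue $e^{i\theta}$ and $\varphi\in\mathscr{B}_\gamma$, $\varphi\neq0$, with $R_{s,iu}\varphi=e^{i\theta}\varphi$. Iterating gives $|\varphi|\le Q_s^n|\varphi|$; integrating against the unique $Q_s$-invariant measure $\pi_s$ from \eqref{ExpCon-Qs} forces $Q_s|\varphi|=|\varphi|$ on $\Lambda_\mu:=\supp\nu=\supp\pi_s$, so $|\varphi|$ is $Q_s$-harmonic there and hence constant by unique ergodicity; after normalisation $|\varphi|\equiv1$ on $\Lambda_\mu$ (the value $0$ being excluded since the $\mathbb{Q}_s^x$-orbits concentrate on $\Lambda_\mu$). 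The equality case of the triangle inequality in $R_{s,iu}\varphi=e^{i\theta}\varphi$, together with the mutual absolute continuity of $\mathbb{Q}_s^x$ and $\mu^{\otimes\mathbb{N}}$ and continuity of all quantities, then yields the cohomological identity
\begin{align*}
u\,\sigma(g,x)+\beta(g\!\cdot\!x)-\beta(x)\ \in\ \theta+u\Lambda'(s)+2\pi\mathbb{Z},\qquad x\in\Lambda_\mu,\ \ g\in\supp\mu,
\end{align*}
where $\varphi=e^{i\beta}$ on $\Lambda_\mu$. This is an arithmeticity relation for the norm cocycle, and the heart of the matter --- the step I expect to be the main obstacle --- is to show that under \ref{Ch7Condi-Moment} and \ref{Ch7Condi-IP} it can hold only for $u=0$. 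The mechanism: iterate along a proximal element $g_0\in\Gamma_\mu$ furnished by \ref{Ch7Condi-IP}(ii); since $\sigma(g_0^m,x)=m\log|\lambda_{g_0}|+O(1)$ and $g_0^m\!\cdot\!x$ converges to the attracting point $x_{g_0}^+$, the identity forces $\theta+u\Lambda'(s)\equiv u\log|\lambda_{g_0}|\pmod{2\pi}$; running this over all proximal elements of $\Gamma_\mu$ and using strong irreducibility and proximality to rule out that the numbers $u\log|\lambda_g|$ all lie in a single coset of $2\pi\mathbb{Z}$ (in the spirit of Le~Page and Guivarc'h--Raugi) produces a contradiction when $u\neq0$.

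It remains to upgrade $\rho(R_{s,iu})<1$ into a uniform exponential bound. Fix $(s,u)$; since $\rho(R_{s,iu})<1$ we have $R^m_{s,iu}\to0$ in $\mathscr{L(B_\gamma,B_\gamma)}$ as $m\to\infty$, so there is $N=N(s,u)$ with $\|R^N_{s,iu}\|_{\mathscr{B}_\gamma\to\mathscr{B}_\gamma}\le\tfrac12$, and by continuity of $(s,u)\mapsto R_{s,iu}$ the same holds on an open neighbourhood. After shrinking $s_0$ so that $[-s_0,s_0]\times K$ is a compact box, cover it by finitely many such neighbourhoods with exponents $N_1,\dots,N_m$ and put $N_*=\max_jN_j$; writing $n=kN_j+r$ with $0\le r<N_j$ on the $j$-th piece and bounding the remainder factor $R^r_{s,iu}$ by $\sup_{s,u}\|R_{s,iu}\|_{\mathscr{B}_\gamma\to\mathscr{B}_\gamma}<\infty$, one obtains $\|R^n_{s,iu}\|_{\mathscr{B}_\gamma\to\mathscr{B}_\gamma}\le C'2^{-n/N_*}$ uniformly in $(s,u)$ and $n\ge1$. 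Since $\|R^n_{s,iu}\varphi\|_\infty\le\|R^n_{s,iu}\varphi\|_\gamma\le C'2^{-n/N_*}\|\varphi\|_\gamma$, the statement follows with $c_1=C'$ and $c_2=(\log2)/N_*$.
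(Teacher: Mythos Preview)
The paper does not prove this lemma; it is quoted from \cite{XGL19b} and used as a black box. Your proposal reconstructs the standard argument (going back to Le~Page, and refined in Guivarc'h--Raugi and Benoist--Quint): a uniform Doeblin--Fortet inequality yields quasi-compactness, a cohomological/non-arithmeticity argument excludes peripheral eigenvalues when $u\neq0$, and a compactness sweep over $[-s_0,s_0]\times K$ upgrades the pointwise spectral radius bound to a uniform exponential decay. This is indeed the route taken in \cite{XGL19b}, so your outline is faithful to the source.

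Two places deserve a little more care. First, in the non-arithmeticity step, the clean way to use proximal elements is to evaluate the cohomological identity at the attracting fixed point $x_g^+\in\Lambda_\mu$ of a proximal $g$ reached as a word of length $n$ in $\supp\mu$; this gives $u\log|\lambda_g|\equiv n(\theta+u\Lambda'(s))\pmod{2\pi}$, and comparing two proximal $g_1,g_2$ of the \emph{same} word length eliminates the right-hand side, yielding $u(\log|\lambda_{g_1}|-\log|\lambda_{g_2}|)\in2\pi\mathbb Z$. The fact that these differences generate a dense subgroup of $\mathbb R$ under \ref{Ch7Condi-IP} is the substantial input (a theorem of Guivarc'h; see also \cite[Ch.~17]{BQ16b}), and you should cite it rather than leave it as ``in the spirit of''. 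Second, your justification that $|\varphi|\not\equiv0$ on $\Lambda_\mu$ is slightly oblique: the cleanest argument is that $|\varphi|\le Q_s^n|\varphi|\to\pi_s(|\varphi|)$ pointwise by the spectral gap of $Q_s$, so if $|\varphi|$ vanished on $\supp\pi_s$ it would vanish everywhere, contradicting $\varphi\neq0$. With these two clarifications the argument is complete.
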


\subsection{Smoothing inequality} \label{Ch7secAuxres001}

The Fourier transform of an integrable function $h: \bb{R} \to \bb{C}$ is defined by 
$\widehat{h}(u) = \int_{\bb{R}} e^{-iuw} h(w) dw,$  $u \in \bb{R}$. 
If $\widehat{h}$ is integrable on $\bb{R}$, then by the Fourier inversion formula we have 
$h(w) = \frac{1}{2 \pi} \int_{\bb{R}} e^{iuw} \widehat{h}(u) du,$ for almost all $w \in \bb{R}$
with respect to the Lebesgue measure on $\bb R$.

Now we fix a non-negative density function $\rho$ on $\bb{R}$ satisfying $\rho(w) \leq \frac{c}{w^4}$ for $w \geq 1$,  
whose Fourier transform $\widehat{\rho}$ is a non-negative Lipschitz continuous function with support on $[-1, 1]$. 
The existence of such a function is shown in \cite{GX21}. 
For any $0< \ee <1$, define the scaled density function
$\rho_{\ee}(w) = \frac{1}{\ee} \rho(\frac{w}{\ee})$, $w \in \bb R,$ 
whose Fourier transform $\widehat{\rho}_{\ee}$ has support on $[-\ee^{-1},\ee^{-1}]$.

For any $\ee >0$ and non-negative integrable function $\psi$ on $\bb R$, set
\begin{align}\label{smoo001}
\psi^+_{\varepsilon}(w) = \sup_{|w - w'| \leq \ee} \psi(w') 
\quad  \mbox{and}  \quad 
\psi^-_{\varepsilon}(w) = \inf_{|w - w'| \leq \ee} \psi(w'),
\quad w \in \bb R. 
\end{align}
We need the following smoothing inequality which is shown in \cite{GLL17}.
Denote by $h_1 * h_2$ the convolution of functions $h_1$ and $h_2$ on the real line. 

\begin{lemma}[\cite{GLL17}]  \label{estimate u convo}
Assume that  $\psi$ is a non-negative integrable function on $\bb R$ and that 
${\psi}^+_{\varepsilon}$ and ${\psi}^-_{\varepsilon}$ are measurable for any $\varepsilon \in (0,1)$. 
Then, there exists a positive constant $c_{\rho}(\ee)$ with $c_{\rho}(\ee) \to 0$ as $\ee \to 0$,
such that for any $w \in \mathbb{R}$, 
\begin{align}
\psi^-_{\ee} * \rho_{\ee^2}(w) - 
\int_{ |u| \geq \ee } \psi^-_{\varepsilon}( w-u ) \rho_{\varepsilon^2}(u) du
\leq \psi(w) \leq (1+ c_{\rho}(\ee))
\psi^+_{\varepsilon} * \rho_{\varepsilon^2}(w). \nonumber
\end{align}
\end{lemma}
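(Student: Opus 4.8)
The plan is to establish the two inequalities of Lemma~\ref{estimate u convo} by direct elementary estimates, relying only on the facts that $\rho_{\ee^2}$ is a probability density with vanishing tails and that, straight from the definitions in \eqref{smoo001}, one has $\psi^-_\ee(w')\le\psi(w'')\le\psi^+_\ee(w')$ whenever $|w'-w''|\le\ee$. The measurability hypothesis on $\psi^\pm_\ee$ ensures that every convolution below is a well-defined element of $[0,+\infty]$; when a right-hand side is $+\infty$ the corresponding inequality is trivial, so one may reason pointwise in $w\in\bb R$ assuming finiteness. Throughout, $\rho_{\ee^2}(u)=\ee^{-2}\rho(u/\ee^2)$ integrates to $1$, and, after the substitution $u=\ee^2 v$, the tail mass $\delta(\ee):=\int_{|u|>\ee}\rho_{\ee^2}(u)\,du=\int_{|v|>1/\ee}\rho(v)\,dv$ tends to $0$ as $\ee\to0$ because $\rho$ is integrable (and is in fact $O(\ee^3)$ by the bound $\rho(v)\le c/v^4$ for $v\ge1$).

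\emph{Upper bound.} Restricting $\psi^+_\ee*\rho_{\ee^2}(w)=\int_{\bb R}\psi^+_\ee(w-u)\rho_{\ee^2}(u)\,du$ to the range $|u|\le\ee$, on which $|(w-u)-w|=|u|\le\ee$ and therefore $\psi^+_\ee(w-u)\ge\psi(w)$, the non-negativity of $\rho_{\ee^2}$ gives
$$
\psi^+_\ee*\rho_{\ee^2}(w)\ \ge\ \psi(w)\int_{|u|\le\ee}\rho_{\ee^2}(u)\,du\ =\ \psi(w)\bigl(1-\delta(\ee)\bigr).
$$
Rearranging and setting $c_\rho(\ee):=\delta(\ee)/(1-\delta(\ee))\to0$ yields $\psi(w)\le(1+c_\rho(\ee))\,\psi^+_\ee*\rho_{\ee^2}(w)$.

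\emph{Lower bound.} Since $\rho_{\ee^2}\ge0$ one has the identity
$$
\psi^-_\ee*\rho_{\ee^2}(w)-\int_{|u|\ge\ee}\psi^-_\ee(w-u)\rho_{\ee^2}(u)\,du\ =\ \int_{|u|<\ee}\psi^-_\ee(w-u)\rho_{\ee^2}(u)\,du,
$$
and on $|u|<\ee$ we have $|(w-u)-w|<\ee$, hence $\psi^-_\ee(w-u)\le\psi(w)$; combining this with $\rho_{\ee^2}\ge0$, $\psi(w)\ge0$ and $\int_{|u|<\ee}\rho_{\ee^2}(u)\,du\le1$ shows that the right-hand side is at most $\psi(w)$, which is the desired bound.

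The proof involves no genuine obstacle; the only input beyond bookkeeping is that the tail mass $\delta(\ee)$ vanishes, which is precisely what forces $c_\rho(\ee)\to0$ (any integrable tail for $\rho$ suffices; the quantitative rate $O(\ee^3)$ is recorded only because it is convenient when the lemma is applied). The one subtlety worth flagging is that for the upper bound, when $\psi$ is merely non-negative and integrable rather than bounded, the majorant $\psi^+_\ee$ and its convolution may be infinite, in which case the inequality is trivial; for the lower bound no such issue arises, since $\psi^-_\ee$ is automatically bounded, being dominated by $\tfrac{1}{2\ee}\int_{\bb R}\psi(u)\,du$.
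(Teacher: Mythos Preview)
Your proof is correct. The paper does not supply its own proof of this lemma; it simply quotes the result from \cite{GLL17}, so there is no argument in the paper to compare against. Your elementary derivation is essentially the standard one: the lower bound is immediate from the identity $\psi^-_\ee*\rho_{\ee^2}(w)-\int_{|u|\ge\ee}\psi^-_\ee(w-u)\rho_{\ee^2}(u)\,du=\int_{|u|<\ee}\psi^-_\ee(w-u)\rho_{\ee^2}(u)\,du$ together with $\psi^-_\ee(w-u)\le\psi(w)$ on $|u|<\ee$, and the upper bound comes from throwing away the tail $|u|>\ee$ in the convolution and using $\psi^+_\ee(w-u)\ge\psi(w)$ there, with $c_\rho(\ee)=\delta(\ee)/(1-\delta(\ee))$. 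One incidental remark: the parenthetical $O(\ee^3)$ tail estimate uses the bound $\rho(v)\le c/v^4$ on both sides of the origin, which is justified here because $\widehat\rho$ is real-valued (being non-negative), hence $\rho$ is even; this is worth saying explicitly, though of course the mere integrability of $\rho$ already gives $\delta(\ee)\to0$, which is all the lemma requires.
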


\subsection{Asymptotic expansions of the perturbed operator}
The goal of this section is to 
establish the precise asymptotics of 
the perturbed operator $R_{s, it}$, 
which will play an important role for establishing 
the Cram\'{e}r type moderate deviation expansion for the coefficients $\langle f, G_n v \rangle$
in Theorem \ref{Thm-Cram-Entry_bb}. 
In the sequel, for any fixed $t >1$, 
we shall choose $s>0$  satisfying the following equation:
\begin{align}\label{SaddleEqua}
\Lambda'(s) - \Lambda'(0) = \frac{\sigma t}{\sqrt{n}}. 
\end{align}
For brevity we denote $\sigma_s = \sqrt{\Lambda''(s)}$. 
By \cite{XGL19b}, the function $\Lambda$ is strictly convex in a small neighborhood of $0$, 
so that $\sigma_s >0$ uniformly in $s \in (0, s_0)$.

For $s >0$, let 
\begin{align*}
\psi_s(w) = e^{-sw} \mathds{1}_{\{ w \geq 0 \} },  \quad  w \in \bb{R}. 
\end{align*}
With the notation $\psi_{s,\ee}^{-}(w) = \inf_{|w - w'| \leq \ee} \psi_s(w')$ (cf. \eqref{smoo001}),
we have that for any $\ee \in (0, 1)$, 
\begin{align}\label{Def-psi-see}
\psi_{s,\ee}^{-}(w) = e^{-s(w + \ee)} \mathds{1}_{ \{w \geq \ee\} },  \quad  w  \in \bb{R}. 
\end{align}
For $s >0$ and $\ee \in (0,1)$, 
the Fourier transform of $\psi_{s,\ee}^{-}$ is given by 
\begin{align}\label{Diffepsi}
\widehat {\psi}^-_{s,\varepsilon}(u) 
= \int_{\bb R} e^{-iuw} \psi_{s,\ee}^{-}(w) dw
=  e^{-2 \varepsilon s} \frac{e^{- i \varepsilon u} }{s + iu},  \quad u \in \bb R. 
\end{align}

The following proposition will be used to 
establish the upper tail Cram\'er type moderate deviation expansion \eqref{LD general upper001}. 
Note that the explicit dependence on $\ee,  t,  l$ and $\varphi$ will play a crucial role.  

\begin{proposition}\label{KeyPropo}
Assume \ref{Ch7Condi-Moment} and \ref{Ch7Condi-IP}.
Let $s > 0$ be such that \eqref{SaddleEqua} holds. 
Let $(t_n)_{n \geq 1}$ be a sequence of positive numbers such that $t_n \to \infty$ and $\frac{t_n}{\sqrt{n}} \to 0$ as $n \to \infty$. 
Then, there exist constants $\gamma, s_0, c, c_{\ee} > 0$ such that 
for any $\ee \in (0,1)$, $s \in (0, s_0)$, $x\in \bb P(V)$, $t \in [t_n, o(\sqrt{n} )]$, 
$\varphi \in \mathscr{B}_{\gamma}$ and $l \in \bb R$, 
\begin{align} \label{KeyPropUpper}
&   \left|  s \sigma_s \sqrt{n}  \int_{\mathbb{R}} e^{-iuln} R_{s, iu}^n (\varphi)(x) 
\widehat {\psi}^-_{s,\varepsilon}(u) \widehat\rho_{\varepsilon^{2}}(u) du 
 -  \sqrt{ 2 \pi }  \pi_s(\varphi)  \right|  \nonumber\\
&  \leq  \left( \frac{ct}{\sqrt{n}}  +  \frac{c_{\ee}}{\sqrt{n}} +  \frac{c}{t^2}  \right)  \| \varphi \|_{\infty}
     + c \left( |l| \sqrt{n} + \frac{1}{\sqrt{n}}  +  e^{-c_{\ee} n } \right) \|\varphi\|_{\gamma}.  
\end{align}   
\end{proposition}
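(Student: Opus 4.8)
The plan is to evaluate the oscillatory integral
$I_n := s\sigma_s\sqrt n\int_{\mathbb R} e^{-iuln}R_{s,iu}^n(\varphi)(x)\,\widehat\psi^-_{s,\varepsilon}(u)\,\widehat\rho_{\varepsilon^2}(u)\,du$
by splitting the range of integration into a central part $|u|\le\delta$, where the spectral decomposition of Lemma \ref{Ch7perturbation thm} applies, and a peripheral part $\delta\le|u|\le\varepsilon^{-2}$ (the support of $\widehat\rho_{\varepsilon^2}$ cuts off everything beyond), where Lemma \ref{Lem-St-NonLatt} gives exponential decay $c_1e^{-c_2n}\|\varphi\|_\gamma$ of the integrand, uniformly in $s$; this produces the $e^{-c_\varepsilon n}\|\varphi\|_\gamma$ contribution. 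On the central part I substitute \eqref{Ch7perturb001}, so that $R^n_{s,iu}\varphi = \lambda_{s,iu}^n\Pi_{s,iu}\varphi + N^n_{s,iu}\varphi$. The $N^n_{s,iu}$ term is bounded using \eqref{Ch7SpGapContrN} by $ca^n$ times $\|\varphi\|_\gamma$ times $\int|\widehat\psi^-_{s,\varepsilon}|$, and since $|\widehat\psi^-_{s,\varepsilon}(u)|\le e^{-2\varepsilon s}/|s+iu|\le 1/s$ away from $0$ and is integrable against $\widehat\rho_{\varepsilon^2}$, this is absorbed into the $e^{-c_\varepsilon n}$ term (with $s\ge$ something like $ct/\sqrt n$ controlled via the saddle equation \eqref{SaddleEqua}, which forces $s\asymp t/\sqrt n$).

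The heart of the matter is the dominant term $s\sigma_s\sqrt n\int_{|u|\le\delta}e^{-iuln}\lambda_{s,iu}^n\,\Pi_{s,iu}\varphi(x)\,\widehat\psi^-_{s,\varepsilon}(u)\,\widehat\rho_{\varepsilon^2}(u)\,du$. Here I would rescale $u = v/(\sigma_s\sqrt n)$. From \eqref{relationlamkappa001} and a Taylor expansion of $\Lambda$ at $s$, one gets $\lambda_{s,iu}^n = \exp(n[\Lambda(s+iu)-\Lambda(s)-iu\Lambda'(s)]) = \exp(-\tfrac12\sigma_s^2 n u^2 + O(n|u|^3))$, so after rescaling $\lambda_{s,iu}^n \to e^{-v^2/2}$ with an error governed by $n|u|^3 = |v|^3/(\sigma_s^3\sqrt n)$. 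The factor $s\sigma_s\sqrt n\,\widehat\psi^-_{s,\varepsilon}(v/(\sigma_s\sqrt n)) = s\sigma_s\sqrt n\, e^{-2\varepsilon s}\frac{e^{-i\varepsilon v/(\sigma_s\sqrt n)}}{s+iv/(\sigma_s\sqrt n)}\to e^{-2\varepsilon s}\cdot\frac{s\sigma_s\sqrt n}{s+iv/(\sigma_s\sqrt n)}$; since $s\asymp t/\sqrt n\to 0$ we must track this carefully — writing it as $\sigma_s\sqrt n/(1 + iv/(s\sigma_s\sqrt n))$ and noting $s\sigma_s\sqrt n\asymp t\to\infty$, the denominator tends to $1$, leaving essentially $\sigma_s\sqrt n$; combined with the remaining $du = dv/(\sigma_s\sqrt n)$ this multiplies out to an $O(1)$ constant, and more precisely $\int_{\mathbb R}e^{-v^2/2}dv = \sqrt{2\pi}$. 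Meanwhile $e^{-2\varepsilon s} = 1 + O(\varepsilon s) = 1+O(\varepsilon t/\sqrt n)$, $\widehat\rho_{\varepsilon^2}(v/(\sigma_s\sqrt n))\to\widehat\rho(0)=1$ with Lipschitz error $O(|v|/(\varepsilon^2\sqrt n))$, and $\Pi_{s,iu}\varphi(x)\to\Pi_{s,0}\varphi(x) = \pi_s(\varphi)$ by analyticity of $u\mapsto\Pi_{s,iu}$ with $\|\tfrac{d}{du}\Pi_{s,iu}\varphi\|_\infty\le c\|\varphi\|_\gamma$, giving an error $O(|v|/\sqrt n)\|\varphi\|_\gamma$. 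Finally the factor $e^{-iuln} = e^{-iv l n/(\sigma_s\sqrt n)} = e^{-iv l\sqrt n/\sigma_s}$ contributes, after the standard estimate $|e^{-i\theta}-1|\le|\theta|$, an error $O(|l|\sqrt n)\|\varphi\|_\infty$ (or one can keep it and note it only helps when $l=0$; the stated bound carries the $|l|\sqrt n\|\varphi\|_\gamma$ term, which this produces). Collecting: the Gaussian main term yields $\sqrt{2\pi}\,\pi_s(\varphi)$, the cubic Taylor remainder of $\Lambda$ yields (after integrating $|v|^3e^{-v^2/2}$) a term $O(1/\sqrt n)$ plus, more delicately, the $c/t^2$ term, which arises because the rescaled variable only ranges over $|v|\le\delta\sigma_s\sqrt n\asymp\delta t$, so the tail $\int_{|v|>c t}e^{-v^2/2}|v|\,dv = O(e^{-ct^2})$ is crude but the genuine source of $1/t^2$ is rather the mismatch $s+iv/(\sigma_s\sqrt n)$ versus $s$ in $\widehat\psi^-_{s,\varepsilon}$: expanding $\frac{1}{1+iv/(s\sigma_s\sqrt n)} = 1 - \frac{iv}{s\sigma_s\sqrt n}+\cdots$ and using $s\sigma_s\sqrt n\asymp t$ gives a correction of relative size $1/t$ in the integrand whose odd part integrates to zero against $e^{-v^2/2}$, leaving an $O(1/t^2)$ residue after the next order. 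All the $\varepsilon$-dependent pieces ($e^{-2\varepsilon s}-1$, the Lipschitz error of $\widehat\rho_{\varepsilon^2}$, the $N^n_{s,iu}$ bound involving $e^{-c_\varepsilon n}$) are collected into the constant $c_\varepsilon$.

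The main obstacle I anticipate is bookkeeping the competing smallness scales simultaneously and uniformly: $s\asymp t/\sqrt n$ is small while $t\to\infty$, so $s\sigma_s\sqrt n\asymp t$ is large, and the factor $\widehat\psi^-_{s,\varepsilon}(u) = e^{-2\varepsilon s}e^{-i\varepsilon u}/(s+iu)$ has a near-singularity at scale $|u|\sim s$ that sits inside the Gaussian scale $|u|\sim 1/(\sigma_s\sqrt n)$ precisely when $t = O(1)$ — which is excluded by the hypothesis $t\ge t_n\to\infty$, and this is exactly why the lower cutoff $t_n$ is needed. Making the expansion $\frac{s\sigma_s\sqrt n}{s+iv/(\sigma_s\sqrt n)} = \frac{1}{1 + iv/(t\sigma_s^2)}\cdot(\text{something}\to 1)$ rigorous, and extracting from it cleanly the $O(ct/\sqrt n)$, $O(c/t^2)$ and $O(1/\sqrt n)$ error terms with the right norm ($\|\varphi\|_\infty$ versus $\|\varphi\|_\gamma$) attached to each, is where the real care lies; everything else is a routine (if lengthy) stationary-phase computation powered by Lemmas \ref{Ch7perturbation thm}, \ref{Lem-St-NonLatt} and the convexity of $\Lambda$.
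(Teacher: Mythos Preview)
Your plan is correct and matches the paper's proof essentially step for step: the same splitting into $|u|\ge\delta$ (handled by Lemma \ref{Lem-St-NonLatt}) and $|u|<\delta$ (spectral decomposition \eqref{Ch7perturb001}), the same rescaling $u\mapsto v/(\sigma_s\sqrt n)$, the same Taylor expansion of $\lambda_{s,iu}^n$, and the same mechanism for the $c/t^2$ term (the odd part of $\widehat\psi^-_{s,\varepsilon}(u_n)-\widehat\psi^-_{s,\varepsilon}(0)$ integrates to zero against $e^{-v^2/2}$, leaving the even $O(v^2/t^2)$ residue). One small slip to correct when you write it out: after rescaling, the range is $|v|\le\delta\sigma_s\sqrt n\asymp\sqrt n$, not $\asymp t$ (you conflated $\sigma_s\sqrt n$ with $s\sigma_s\sqrt n$); the paper in fact first peels off the annulus $n^{-1/2}\log n\le|u|<\delta$ by straight Gaussian decay and only then rescales on $|u|<n^{-1/2}\log n$, which keeps the cubic remainder $|v|^3/\sqrt n$ honestly small---you should do the same.
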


\begin{proof}
Without loss of generality, we assume that the target function $\varphi$ is non-negative on $\bb P(V)$. 
By Lemma \ref{Ch7perturbation thm}, we have the following decomposition:
with $\delta > 0$ small enough,
\begin{align} \label{Thm1 integral1 J Nol}
s \sigma_s \sqrt{n} \int_{\mathbb{R}} e^{-iuln}  R_{s, iu}^n (\varphi)(x) 
\widehat {\psi}^-_{s,\varepsilon}(u) \widehat\rho_{\varepsilon^{2}}(u) du  
 =: I_1 + I_2 + I_3, 
\end{align}
where 
\begin{align*} 
I_1  & =  s \sigma_s \sqrt{n} \int_{|u|\geq\delta}  e^{-iuln}  R^{n}_{s, iu}(\varphi)(x) 
              \widehat {\psi}^-_{s,\varepsilon}(u) \widehat\rho_{\varepsilon^{2}}(u) du,  \nonumber\\
I_2  & =  s \sigma_s \sqrt{n}  \int_{|u|< \delta }  e^{-iuln}  N^{n}_{s, iu}(\varphi )(x) 
               \widehat {\psi}^-_{s,\varepsilon}(u) \widehat\rho_{\varepsilon^{2}}(u) du,   \nonumber\\
I_3  & =  s \sigma_s \sqrt{n}  \int_{|u|<\delta}  e^{-iuln}  \lambda^{n}_{s, iu} \Pi_{s, iu}(\varphi )(x) 
               \widehat {\psi}^-_{s,\varepsilon}(u) \widehat\rho_{\varepsilon^{2}}(u) du.        
\end{align*}
For simplicity, we denote $K_s(iu)= \log \lambda_{s, iu}$ and choose the branch such that $K_s(0)=0$.
Using \eqref{relationlamkappa001}, we have that for $u \in (-\delta, \delta)$, 
\begin{align}\label{formula-Ks-iu}
K_s(iu) = \Lambda(s + iu) - \Lambda(s) - iu \Lambda'(s). 
\end{align}
Since the function $\Lambda$ is analytic in a small neighborhood of $0$,
using Taylor's formula yields that for $u \in (-\delta, \delta)$, 
\begin{align} \label{Ch7Expan Ks 01}
K_s(iu) = \sum_{k=2}^{\infty}  \frac{\Lambda^{(k)}(s) }{k!} (iu)^k,
\quad \mbox{where}  \  \Lambda(s) = \log \kappa(s) 
\end{align}
and 
\begin{align} \label{Expan Ks 02}
\Lambda'(s)-\Lambda'(0)=\sum_{k=2}^{\infty}\frac{\gamma_k}{(k-1)!}s^{k-1},  \quad
\mbox{where} \   \gamma_k = \Lambda^{(k)}(0).   
\end{align}
From \eref{SaddleEqua} and \eref{Expan Ks 02}, we see that
\begin{align}\label{equation t and s 1}
 \frac{ \sigma t }{ \sqrt{n} } = \sum_{k=2}^{\infty} \frac{ \gamma_k }{ (k-1)! } s^{k-1}.
\end{align}
Since $\gamma_2 = \sigma^2>0$,  from \eqref{equation t and s 1} 
we deduce that for any $t >1$ and sufficiently large $n \geq 1$, 
the equation \eqref{SaddleEqua} has a unique solution given by
\begin{align}\label{root s 1}
s = \frac{1}{\gamma_2^{ 1/2 }} \frac{t}{\sqrt{n}} 
    - \frac{\gamma_3}{ 2 \gamma_2^2 } \left( \frac{t}{ \sqrt{n} }  \right)^2
    - \frac{\gamma_4 \gamma_2 - 3\gamma_3^2}{6\gamma_2^{7/2} } \left( \frac{t}{ \sqrt{n} } \right)^3
    + \cdots. 
\end{align}
For sufficiently large $n \geq 1$, the series on the right-hand side of \eqref{root s 1} is absolutely convergent
according to the theorem on the inversion of analytic functions. 
Besides, from \eqref{SaddleEqua} and $t = o(\sqrt{n})$ we see that $s \to 0^+$ as $n \to \infty$, 
so that we can assume $s \in (0, s_0)$ for sufficiently small constant $s_0 >0$.

\textit{Estimate of $I_1$.} 
Since the function $\widehat\rho_{\varepsilon^{2}}$ is supported on $[-\ee^{-2}, \ee^{-2}]$, 
by Lemma \ref{Lem-St-NonLatt}, for fixed $\delta >0$, there exist constants $c_{\ee}, C_{\ee} > 0$ such that
for any $\varphi \in \mathscr{B}_{\gamma}$, 
\begin{align}\label{EstiRsit}
\sup_{s \in (0, s_0)} \sup_{\delta \leq |u| \leq \ee^{-2} }  
\sup_{x\in \bb P(V) } |R^{n}_{s, iu}\varphi(x)|  \leq   C_{\ee} e^{-c_{\ee} n} \|\varphi \|_{\gamma}.  
\end{align}
From \eqref{Diffepsi} and the fact that $\rho_{\ee^2}$ is a density function on $\bb R$, we see that 
\begin{align}\label{Contrpsirho}
\sup_{u \in \bb{R}} |\widehat {\psi}^-_{s,\ee}(u)|
\leq \widehat {\psi}^-_{s,\ee}(0) = \frac{1}{s} e^{ -2\ee s } \leq \frac{1}{s}, 
\quad  
\sup_{u \in \bb{R}} |\widehat\rho_{\ee^{2}}(u)| 
\leq \widehat\rho_{\ee^{2}}(0) = 1.  
\end{align}
Using \eqref{EstiRsit} and the first inequality in \eqref{Contrpsirho}, 
and taking into account that the function $\widehat\rho_{\ee^{2}}$ is integrable on $\bb{R}$, 
we obtain the desired bound for $I_1$: 
for fixed $\delta >0$, there exist constants  $c_{\ee}, C_{\ee} > 0$ such that
for any $s \in (0, s_0)$, $l \in \bb R$, $x \in \bb P(V)$ and $\varphi \in \mathscr{B}_{\gamma}$, 
\begin{align}\label{EstimI1n00a}
|I_1| \leq   C_{\ee} e^{-c_{\ee} n } \|\varphi \|_{\gamma}. 
\end{align}

\textit{Estimate of $I_2$.}
Using \eqref{Ch7SpGapContrN}, we have
that uniformly in $s \in (0, s_0)$, $u \in (-\delta, \delta)$, $x \in \bb P(V)$ and $\varphi \in \mathscr{B}_{\gamma}$, 
\begin{align*}
| N^{n}_{s, iu}(\varphi )(x) |
\leq \| N^{n}_{s, iu} \|_{\mathscr{B}_{\gamma} \to \mathscr{B}_{\gamma}} \|\varphi \|_{ \gamma}
\leq  C e^{-cn} \|\varphi \|_{ \gamma}.
\end{align*} 
This, together with \eqref{Contrpsirho}, implies the desired bound for $I_2$: 
for fixed small $\delta >0$, there exist constants $c, c_{\ee} > 0$ such that
for any $s \in (0, s_0)$, $l \in \bb R$, $x \in \bb P(V)$ and $\varphi \in \mathscr{B}_{\gamma}$, 
\begin{align}\label{Esti I2n}
 |I_2|  \leq  C e^{-cn}  \|\varphi \|_{\gamma}. 
\end{align}

\textit{Estimate of $I_3$.}
For brevity, we denote for $s \in (0, s_0)$ and $x \in \bb P(V)$,
\begin{align}\label{Defpsisxt}
\Psi_{s,x}(u):= \Pi_{s, iu}(\varphi)(x) 
\widehat {\psi}^-_{s,\varepsilon}(u) \widehat\rho_{\varepsilon^{2}}(u),  \quad  -\delta < u < \delta. 
\end{align}
Recalling that $K_s(iu)= \log \lambda_{s,iu}$, we decompose the term $I_3$ into two parts: 
\begin{align}  \label{DecompoJ1}
I_3 = s \sigma_s \sqrt{n}  \int_{ |u| < \delta  }  e^{n K_s( iu) - iu ln} \Psi_{s,x}(u) du
   = I_{31} + I_{32}, 
\end{align}
where 
\begin{align*}
&   I_{31} = s \sigma_s \sqrt{n}  
  \int_{ n^{- \frac{1}{2}} \log n \leq |u| < \delta  }  e^{n K_s( iu) - iu ln} \Psi_{s,x}(u) du,  \nonumber\\
&   I_{32}  = s \sigma_s \sqrt{n}  \int_{ |u| <  n^{-\frac{1}{2}} \log n }  e^{n K_s( iu) - iu ln} \Psi_{s,x}(u) du. 
\end{align*}

\textit{Estimate of $I_{31}$.}
By \eqref{Ch7SpGapContrN}, there exists a constant $c >0$ 
such that for any $s \in (0, s_0)$, $|u| < \delta$, $x \in \bb P(V)$ and $\varphi \in \mathscr{B}_{\gamma}$, 
\begin{align}\label{ContrPirs}
|\Pi_{s, iu}(\varphi)(x)| \leq c \|\varphi\|_{\gamma}.
\end{align}
This, together with \eqref{Contrpsirho}, yields that
there exists a constant $c >0$ such that 
for any $s \in (0, s_0)$, $|u| < \delta$, $x \in \bb P(V)$ and $\varphi \in \mathscr{B}_{\gamma}$,  
\begin{align} \label{Estipsi01}
|\Psi_{s,x}(u)| \leq \frac{1}{s} |\Pi_{s, iu}(\varphi)(x)| \leq  \frac{c}{s} \|\varphi\|_{\gamma}.
\end{align}
Using \eqref{Ch7Expan Ks 01} and noting that $\Lambda''(s) = \sigma_s^2>0 $, 
we find that there exists a constant $c>0$ such that for any $s \in (0, s_0)$ and $u \in (-\delta, \delta)$, 
\begin{align*}
\Re \big( K_s(iu) \big) 
= \Re \left( \sum_{k=2}^{\infty}  \frac{\Lambda^{(k)}(s) }{k!} (iu)^k \right) 
< -\frac{1}{4} \sigma_s^2 u^2 < - c u^2. 
\end{align*}
Combining this with \eqref{Estipsi01}, we derive that 
there exists a constant $c >0$ such that for any $s \in (0, s_0)$, $l \in \bb R$, $x \in \bb P(V)$ and $\varphi \in \mathscr{B}_{\gamma}$,  
\begin{align} \label{EstiJ11}
|I_{31}| 
& \leq  c \sqrt{n}  \|\varphi\|_{\gamma}  \int_{ n^{- \frac{1}{2}} \log n \leq |u| < \delta  }  \left| e^{n K_s( iu) } \right| du   \notag\\ 
& \leq  c \sqrt{n}  \|\varphi\|_{\gamma}  \int_{ n^{ - \frac{1}{2} }  \log n  \leq |u| <  \delta  } e^{- c n u^2 }  du  \notag\\
& \leq  c   \|\varphi\|_{\gamma}  \int_{   \log n  \leq |w| <  \delta \sqrt{n} } e^{- c w^2 }  dw  \notag\\
& \leq \frac{c}{ \sqrt{n} }  \|\varphi\|_{\gamma}. 
\end{align}

\textit{Estimate of $I_{32}$.}
By a change of variable $u' = \sigma_s \sqrt{n} u$ and using \eqref{Ch7Expan Ks 01}, we get
\begin{align}\label{I32-decomposition-yy}
 I_{32}  
 &  =  s \int_{- \sigma_s \log n}^{\sigma_s \log n }
  e^{n K_s( \frac{iu}{ \sigma_s \sqrt{n} } ) }
  e^{-i \frac{l \sqrt{n}}{\sigma_s}  u } \Psi_{s,x} \left( \frac{u}{\sigma_s \sqrt{n}} \right) du  \nonumber\\
 &  =  s \int_{- \sigma_s \log n}^{\sigma_s \log n }
  e^{-\frac{u^2}{2}}  e^{n K_s( \frac{iu}{ \sigma_s \sqrt{n} } ) + \frac{u^2}{2} }   
  e^{-i \frac{l \sqrt{n}}{\sigma_s}  u } \Psi_{s,x} \left( \frac{u}{\sigma_s \sqrt{n}} \right) du  \nonumber\\
& = I_{321} + I_{322} + I_{323}, 
\end{align}
where 
\begin{align*}
&  I_{321} = s \int_{- \sigma_s \log n}^{\sigma_s \log n }  e^{-\frac{u^2}{2}}  
  \left[ e^{n K_s( \frac{iu}{ \sigma_s \sqrt{n} } ) + \frac{u^2}{2} }   - 1 \right] 
  e^{-i \frac{l \sqrt{n}}{\sigma_s}  u }   \Psi_{s,x} \left( \frac{u}{\sigma_s \sqrt{n}} \right) du  \nonumber\\ 
&  I_{322} = s \int_{- \sigma_s \log n}^{\sigma_s \log n }  e^{-\frac{u^2}{2}}   
  \left[ e^{-i \frac{l \sqrt{n}}{\sigma_s}  u } - 1 \right] 
   \Psi_{s,x} \left( \frac{u}{\sigma_s \sqrt{n}} \right) du  \nonumber\\ 
&  I_{323} = s \int_{- \sigma_s \log n}^{\sigma_s \log n }  e^{-\frac{u^2}{2}}    
   \Psi_{s,x} \left( \frac{u}{\sigma_s \sqrt{n}} \right) du. 
\end{align*}

\textit{Estimate of $I_{321}$.}
Using \eqref{formula-Ks-iu} and Taylor's expansion, we get that for any $|u| \leq \sigma_s \log n$ and $s \in (0, s_0)$, 
\begin{align*}
\left| n K_s \left( \frac{iu}{ \sigma_s \sqrt{n} } \right) + \frac{u^2}{2} \right| \leq  c \frac{|u|^3}{\sqrt{n}}. 
\end{align*}
By the inequality $| e^{z}-1 | \leq |z|e^{|z|}$ for $z \in \bb C$, 
it follows that for any $|u| \leq \sigma_s \log n$ and $s \in (0, s_0)$, 
\begin{align*} 
& \left| e^{n K_s( \frac{iu}{ \sigma_s \sqrt{n} } ) + \frac{u^2}{2} }   - 1 \right|
\leq  c \frac{|u|^3}{\sqrt{n}}  e^{ c \frac{|u|^3}{\sqrt{n}} } 
\leq c' \frac{|u|^3}{\sqrt{n}}. 
\end{align*}
%
%
From this, using \eqref{Estipsi01} and the fact that $|e^{-iul \sqrt{n} / \sigma_s}| = 1$, we obtain that
there exists a constant $c >0$ such that for any $s \in (0, s_0)$, $l \in \bb R$, $x \in \bb P(V)$ and $\varphi \in \mathscr{B}_{\gamma}$, 
\begin{align}\label{Esti_I321_aa}
|I_{321}| 
& \leq  \frac{cs}{\sqrt{n}} \int_{- \sigma_s \log n}^{\sigma_s \log n }  
      e^{-\frac{u^2}{2}}  |u|^3  \left| \Psi_{s,x} \left( \frac{u}{\sigma_s \sqrt{n}} \right) \right|  du    \notag\\
& \leq  \frac{c}{\sqrt{n}}  \|\varphi\|_{\gamma}  \int_{- \sigma_s \log n}^{\sigma_s \log n }  e^{-\frac{u^2}{2}}  |u|^3  du
\leq \frac{c}{\sqrt{n}} \|\varphi\|_{\gamma}. 
\end{align}

\textit{Estimate of $I_{322}$.}
Since $|e^{-iul \sqrt{n} / \sigma_s} - 1| \leq |u l \sqrt{n} / \sigma_s| \leq c |ul| \sqrt{n}$, 
using again \eqref{Estipsi01}, we get that 
there exists a constant $c >0$ such that for any $s \in (0, s_0)$, $l \in \bb R$, $x \in \bb P(V)$ and $\varphi \in \mathscr{B}_{\gamma}$, 
\begin{align}\label{Esti_I322_aa}
|I_{322}| 
\leq   cs |l| \sqrt{n} \int_{- \sigma_s \log n}^{\sigma_s \log n }  
      e^{-\frac{u^2}{2}}  |u|  \left| \Psi_{s,x} \left( \frac{u}{\sigma_s \sqrt{n}} \right) \right|  du
\leq  c |l| \sqrt{n} \|\varphi\|_{\gamma}. 
\end{align}

\textit{Estimate of $I_{323}$.}
We shall establish the following bound for $I_{323}$: there exist constants $c, c_{\ee} > 0$ such that
for any $s \in (0, s_0)$, $x\in \bb P(V)$, $t \in [t_n, o(\sqrt{n} )]$
and $\varphi \in \mathscr{B}_{\gamma}$,
\begin{align}\label{Esti_I32_bb}
\left|  I_{323} -   \sqrt{ 2 \pi } \pi_s(\varphi)  \right| 
\leq   \left( \frac{ct}{\sqrt{n}}  +  \frac{c_{\ee}}{\sqrt{n}} +  \frac{c}{t^2}  \right)  \| \varphi \|_{\infty}
   +  \frac{c}{\sqrt{n}} \| \varphi \|_{\gamma}. 
\end{align}
To prove \eqref{Esti_I32_bb}, 
we denote 
\begin{align}\label{def-un}
u_n = \frac{u}{ \sigma_s \sqrt{n} }, 
\end{align}
and, in view of \eqref{Defpsisxt}, we write 
\begin{align*}
\Psi_{s,x} (u_n) =  h_1(u_n) + h_2(u_n) + h_3(u_n) + h_4(u_n), 
\end{align*}
where
\begin{align*}
&   h_1(u_n) =  \big[ \Pi_{s, i u_n}(\varphi)(x) -  \pi_s(\varphi) \big] 
\widehat {\psi}^-_{s,\varepsilon}(u_n)
\widehat\rho_{\varepsilon^{2}}(u_n),    \nonumber\\
&   h_2(u_n)  =    \pi_s(\varphi) \widehat {\psi}^-_{s,\varepsilon}(u_n) 
\left[ \widehat\rho_{\varepsilon^{2}}(u_n) -  \widehat\rho_{\varepsilon^{2}}(0)  \right]  \nonumber\\
&   h_3(u_n) =   \pi_s(\varphi) 
 \left[ \widehat {\psi}^-_{s,\varepsilon}(u_n) - \widehat {\psi}^-_{s,\varepsilon}(0) \right]
   \widehat\rho_{\varepsilon^{2}}(0),  \nonumber\\
&   h_4(u_n) =   \pi_s(\varphi) \widehat {\psi}^-_{s,\varepsilon}(0)
\widehat\rho_{\varepsilon^{2}}(0). 
\end{align*}
With the above notation, the term $I_{323}$ can be decomposed into four parts: 
\begin{align}\label{DecompoJ122}
I_{323} = J_{1} + J_{2} + J_{3} + J_{4}, 
\end{align}
where for $j = 1,2,3,4$, 
\begin{align*}
J_{j} =  s  \int_{- \sigma_s \log n }^{ \sigma_s \log n } e^{-\frac{u^2}{2}} 
h_j(u_n) du. 
\end{align*}

\textit{Estimate of $J_{1}$.}
By \eqref{Ch7SpGapContrN} and \eqref{def-un}, we have
$|\Pi_{s, i u_n}(\varphi)(x) -  \pi_s(\varphi)|
\leq  c  \frac{|u|}{\sqrt{n}} \| \varphi \|_{\gamma}$, uniformly in $x \in \bb P(V)$, $s \in (0, s_0)$
and $|u| \leq \sigma_s \log n $. 
Combining this with \eqref{Contrpsirho} gives
$|h_1(u_n)|  \leq  c  \frac{|u|}{ s \sqrt{n}} \| \varphi \|_{\gamma},$ 
and hence
\begin{align}\label{Controlh1}
| J_1 | \leq  \frac{c}{ \sqrt{n} } \| \varphi \|_{\gamma}. 
\end{align}

\textit{Estimate of $J_{2}$.}
Since $\widehat\rho_{\varepsilon^{2}}$ is Lipschitz continuous on $\bb R$, 
we have 
$|\widehat\rho_{\varepsilon^{2}}(u_n) -  \widehat\rho_{\varepsilon^{2}}(0)| \leq c \frac{|u_n|}{\varepsilon^4} $.
This, together with \eqref{Contrpsirho}, implies that  
$|h_2(u_n)|  \leq  \frac{c}{\varepsilon^4}  \frac{|u|}{ s \sqrt{n}} \|\varphi\|_{\infty},$
so that 
\begin{align}\label{Controlh2}
|J_2| \leq \frac{c}{\varepsilon^4}  \frac{1}{ \sqrt{n} } \|\varphi\|_{\infty}
 \leq  \frac{c_{\ee}}{ \sqrt{n} } \|\varphi\|_{\infty}.
\end{align}

\textit{Estimate of $J_{3}$.}
By the definition of the function $\widehat {\psi}^-_{s,\varepsilon}$ (see \eqref{Diffepsi}), we have 
\begin{align*}
 \widehat {\psi}^-_{s,\varepsilon}(u_n) - \widehat {\psi}^-_{s,\varepsilon}(0) 
& =  e^{-2 \varepsilon s} \left( \frac{e^{- i \varepsilon u_n} }{s + iu_n} - \frac{1}{s}  \right) \nonumber\\
& =  e^{-2 \varepsilon s}  e^{- i \varepsilon u_n} \left( \frac{1}{s + iu_n} - \frac{1}{s} \right)
    + e^{-2 \varepsilon s} \frac{1}{s} \left( e^{- i \varepsilon u_n} - 1 \right)   \nonumber\\
& =: A_1 (u)  +  A_2 (u) + A_3 (u), 
\end{align*}
where 
\begin{align*}
&  A_1 (u) =  e^{-2 \varepsilon s} \left( e^{- i \varepsilon u_n} - 1 \right) \frac{-i s u_n - u_n^2}{s(s^2 + u_n^2)},   \notag\\
& A_2 (u) =  e^{-2 \varepsilon s} \frac{-i s u_n - u_n^2}{s(s^2 + u_n^2)}, 
 \qquad  A_3 (u) =  e^{-2 \varepsilon s} \frac{1}{s} \left( e^{- i \varepsilon u_n} - 1 \right). 
\end{align*}
Since $\widehat\rho_{\varepsilon^{2}}(0) = 1$, it follows that $J_3 = J_{31} + J_{32} + J_{33}$, 
where  
\begin{align*}
J_{3j} = s \pi_s(\varphi) 
\int_{- \sigma_s \log n }^{ \sigma_s \log n } e^{-\frac{u^2}{2}}  A_j(u) du,  \quad  j = 1, 2, 3.  
\end{align*}
For $J_{31}$, 
using the inequality $|e^z - 1| \leq e^{\Re z} |z|$ for $z \in \bb C$,  \eqref{def-un} and \eqref{SaddleEqua}, we get 
\begin{align*}
|A_1 (u)|  & \leq  |u_n| \frac{s |u_n| + u_n^2}{s(s^2 + u_n^2)}  
 =   \left( 1 +  \frac{|u_n|}{s}  \right) \frac{u_n^2}{s^2 + u_n^2} 
 \leq c \left(  1 +  \frac{|u|}{t} \right) \frac{u^2}{t^2}.
 \end{align*}
 Since $|\pi_s (\varphi)| \leq c \|\varphi\|_{\infty}$ 
 and $s = O(\frac{t}{\sqrt{n}})$, $t \in [t_n, o(\sqrt{n} )]$ with $t_n \to \infty$ as $n \to \infty$, 
 there exists a constant $c>0$ such that for all $n \geq 1$, 
$s \in (0, s_0)$ and $\varphi \in \mathscr{B}_{\gamma}$, 
\begin{align}\label{Esti_J31}
|J_{31}| \leq \frac{c}{ \sqrt{n} } \|\varphi\|_{\infty}. 
\end{align}
For $J_{32}$, using the fact that the integral of an odd function 
over a symmetric interval is identically zero, 
by \eqref{def-un}, \eqref{SaddleEqua} and elementary calculations we deduce that there exists a constant $c>0$ such that for any $n \geq 1$, 
$s \in (0, s_0)$ and $\varphi \in \mathscr{B}_{\gamma}$, 
\begin{align}\label{Esti_J32}
|J_{32}| = e^{-2 \varepsilon s} \pi_s(\varphi) \int_{- \sigma_s \log n }^{ \sigma_s \log n } e^{-\frac{u^2}{2}}  
     \frac{u_n^2}{s^2 + u_n^2} du 
 \leq \frac{c}{t^2} \|\varphi\|_{\infty}. 
\end{align}
For $J_{33}$, using again \eqref{def-un} and the inequality $|e^z - 1| \leq e^{\Re z} |z|$ for $z \in \bb C$,
we see that there exists a constant $c>0$ such that for any $n \geq 1$, 
$s \in (0, s_0)$ and $\varphi \in \mathscr{B}_{\gamma}$, 
\begin{align}\label{Esti_J33}
|J_{33}| 
\leq  c \|\varphi\|_{\infty}  \int_{- \sigma_s \log n }^{ \sigma_s \log n } e^{-\frac{u^2}{2}}  \frac{|u|}{\sigma_s \sqrt{n}} du
\leq \frac{c}{ \sqrt{n} } \|\varphi\|_{\infty}. 
\end{align}
Consequently, putting together the bounds \eqref{Esti_J31}, \eqref{Esti_J32} and \eqref{Esti_J33}, we obtain
\begin{align} \label{IntegJ123Nol}
J_{3} \leq  \frac{c}{ \sqrt{n} } \|\varphi\|_{\infty} +   \frac{c}{ t^2 }  \|\varphi\|_{ \infty }. 
\end{align}

\textit{Estimate of $J_{4}$.}
It follows from \eqref{Diffepsi} and $\widehat\rho_{\varepsilon^{2}}(0) = 1$ that 
\begin{align}\label{EstiInteJ4n}
J_{4} 
 =  e^{ -2\varepsilon s } \pi_s(\varphi)    
  \int_{- \sigma_s \log n }^{ \sigma_s \log n }
e^{-\frac{u^2}{2}}  du. 
\end{align}
Since there exists a constant $c>0$ such that for any $s \in (0, s_0)$ and $n \geq 1$, 
\begin{align*} 
\sqrt{ 2 \pi }  >  \int_{-\sigma_s \log n  }^{ \sigma_s \log n } e^{- \frac{u^2}{2}}  du  
 >  \sqrt{ 2 \pi }  - \frac{c}{n}, 
\end{align*}
we get 
\begin{align*}
J_4 =  \sqrt{ 2 \pi }  e^{ -2\varepsilon s }  \pi_s(\varphi)   \left[ 1 + O \Big( \frac{1}{n} \Big) \right]. 
\end{align*}
Since $s = O(\frac{t}{\sqrt{n}})$ and $t >1$, it follows that
\begin{align}\label{EstiIntegral00a}
\Big|  J_4 -   \sqrt{ 2 \pi } \pi_s(\varphi)  \Big| \leq  \frac{ct}{\sqrt{n}} \| \varphi \|_{\infty}. 
\end{align} 
In view of \eqref{DecompoJ122}, putting together the bounds \eqref{Controlh1}, \eqref{Controlh2}, 
\eqref{IntegJ123Nol} and \eqref{EstiIntegral00a}, and noting that $\|\varphi\|_{\infty} \leq \|\varphi\|_{\gamma}$, 
we obtain the desired bound \eqref{Esti_I32_bb}. 
Putting together \eqref{Esti_I321_aa}, \eqref{Esti_I322_aa} and \eqref{Esti_I32_bb}, 
we derive that 
there exist constants $c, c_{\ee} > 0$ such that
for any $s \in (0, s_0)$, $x\in \bb P(V)$, $t \in [t_n, o(\sqrt{n} )]$, 
$\varphi \in \mathscr{B}_{\gamma}$ and $l \in \bb R$,
\begin{align}\label{Esti_I32_Bound}
\left|  I_{32} - \sqrt{ 2 \pi } \pi_s(\varphi)  \right| 
\leq  \left( \frac{ct}{\sqrt{n}}  +  \frac{c_{\ee}}{\sqrt{n}} +  \frac{c}{t^2}  \right)  \| \varphi \|_{\infty}
     + c \left( |l| \sqrt{n} + \frac{1}{\sqrt{n}} \right) \|\varphi\|_{\gamma}.  
\end{align}
Combining \eqref{EstimI1n00a}, \eqref{Esti I2n},  \eqref{EstiJ11} and \eqref{Esti_I32_Bound}, 
we conclude the proof of Proposition \ref{KeyPropo}.
\end{proof}

We proceed to give an asymptotic expansion of the perturbed operator $R_{s, it}$ when $s$ is negative. 
For $s <0$, let 
\begin{align*}
\phi_s(w) = e^{-sw} \mathds{1}_{\{ w \leq 0 \} },  \quad  w \in \bb{R}. 
\end{align*}
With the notation in \eqref{smoo001}, for $\ee \in (0, 1)$, 
the function $\phi_{s,\ee}^{+}$ is given as follows: 
$\phi_{s,\ee}^{+}(w) = 0$ when $w > \ee$; 
$\phi_{s,\ee}^{+}(w) = 1$ when $w \in [-\ee, \ee]$;
$\phi_{s,\ee}^{+}(w) = e^{-s(w + \ee)}$ when $w < -\ee$.  
So the Fourier transform of $\phi_{s,\ee}^{+}$ is 
\begin{align}\label{Diffephi}
 \widehat {\phi}^+_{s,\ee}(u)  
=  \int_{\bb{R}} e^{-iuw} \phi_{s,\ee}^{+}(w) dw
= 2 \frac{\sin (\ee u)}{u}   
    + e^{i \ee u }  \frac{1}{-s - i u },  \quad   u \in \bb{R},   
\end{align}
where we use the convention that $\frac{\sin (\ee \cdot 0)}{0} = \ee$. 
In the sequel, for any fixed $t >1$, we choose $s<0$  satisfying the equation:
\begin{align}\label{SaddleEqua-bis}
\Lambda'(s) - \Lambda'(0) = - \frac{\sigma t}{\sqrt{n}}. 
\end{align}

The following result is an analogue of Proposition \ref{KeyPropo}
and will be used to establish the lower tail Cram\'er type moderate deviation expansion
 \eqref{LD general lower001}. 
As in Proposition \ref{KeyPropo}, the explicit dependence on $\ee,  t,  l$ and $\varphi$ will play a crucial role.  

\begin{proposition}\label{KeyPropo-02}
Assume \ref{Ch7Condi-Moment} and \ref{Ch7Condi-IP}. 
Let $\widehat {\phi}^+_{s,\ee}$ be defined in \eqref{Diffephi}. 
Suppose that $s<0$ satisfies the equation \eqref{SaddleEqua-bis}. 
Let $(t_n)_{n \geq 1}$ be a sequence of positive numbers such that $t_n \to \infty$ and $\frac{t_n}{\sqrt{n}} \to 0$ as $n \to \infty$. 
Then, there exist constants $\gamma, s_0, c, c_{\ee} > 0$ such that 
for any $\ee \in (0,1)$, $s \in (-s_0, 0)$, $x\in \bb P(V)$, $t \in [t_n, o(\sqrt{n} )]$, 
$\varphi \in \mathscr{B}_{\gamma}$ and $l \in \bb R$, 
\begin{align*} 
&  \left|  -s \sigma_s \sqrt{n}  \int_{\bb{R}} e^{-iuln} R_{s, iu}^n (\varphi)(x) 
\widehat{\phi}^+_{s,\ee}(u) \widehat\rho_{\ee^{2}}(u) du 
 -  \sqrt{ 2 \pi }  \pi_s(\varphi)  \right|  \nonumber\\
&  \leq  \left( \frac{ct}{\sqrt{n}}  +  \frac{c_{\ee}}{\sqrt{n}} +  \frac{c}{t^2}  \right)  \| \varphi \|_{\infty}
     + c \left( |l| \sqrt{n} + \frac{1}{\sqrt{n}}  +  e^{-c_{\ee} n } \right) \|\varphi\|_{\gamma}.  
\end{align*}
\end{proposition}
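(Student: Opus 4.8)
The plan is to mirror the proof of Proposition \ref{KeyPropo} almost verbatim, using the spectral decomposition of Lemma \ref{Ch7perturbation thm} together with the saddle point $s<0$ chosen by \eqref{SaddleEqua-bis}. First I would assume without loss of generality that $\varphi \geq 0$ and split the integral $-s\sigma_s\sqrt n \int_{\bb R} e^{-iuln} R_{s,iu}^n(\varphi)(x) \widehat{\phi}^+_{s,\ee}(u) \widehat\rho_{\ee^2}(u)\,du$ into three pieces $I_1 + I_2 + I_3$ according to $|u|\geq\delta$, the $N^n_{s,iu}$ part on $|u|<\delta$, and the $\lambda^n_{s,iu}\Pi_{s,iu}$ part on $|u|<\delta$. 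For $I_1$ one invokes the non-arithmeticity Lemma \ref{Lem-St-NonLatt} on the compact annulus $\delta \leq |u| \leq \ee^{-2}$ (using that $\widehat\rho_{\ee^2}$ is supported in $[-\ee^{-2},\ee^{-2}]$), combined with the uniform bound $\sup_u|\widehat{\phi}^+_{s,\ee}(u)| \leq |\widehat{\phi}^+_{s,\ee}(0)| = 2\ee + \frac{1}{-s} \leq \frac{c}{-s}$ that replaces \eqref{Contrpsirho}; this gives $|I_1| \leq C_\ee e^{-c_\ee n}\|\varphi\|_\gamma$. For $I_2$ the exponential operator-norm decay \eqref{Ch7SpGapContrN} of $N^n_{s,iu}$ gives $|I_2| \leq Ce^{-cn}\|\varphi\|_\gamma$ directly.

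The substance is in $I_3$, and here the argument is structurally identical to the one for $I_{31}, I_{32}$ in Proposition \ref{KeyPropo}. One sets $K_s(iu) = \Lambda(s+iu) - \Lambda(s) - iu\Lambda'(s) = \sum_{k\geq 2}\frac{\Lambda^{(k)}(s)}{k!}(iu)^k$, notes that \eqref{SaddleEqua-bis} together with the expansion \eqref{Expan Ks 02} yields a unique root $s = -\frac{1}{\gamma_2^{1/2}}\frac{t}{\sqrt n} + \cdots$ with $s\to 0^-$, and that $\Re K_s(iu) < -cu^2$ near $0$ by strict convexity of $\Lambda$, so $\sigma_s>0$ uniformly. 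Splitting $I_3$ at $|u| = n^{-1/2}\log n$: the outer part $I_{31}$ is bounded by $c\sqrt n\|\varphi\|_\gamma \int_{\log n \leq |w|<\delta\sqrt n} e^{-cw^2}dw \leq \frac{c}{\sqrt n}\|\varphi\|_\gamma$ after the change of variables $w = \sqrt n u$ and using $|\widehat{\phi}^+_{s,\ee}(u)| \leq c/(-s)$ to absorb the factor $s$. For the inner part $I_{32}$, substitute $u' = \sigma_s\sqrt n\, u$, extract the Gaussian, and peel off the three corrections exactly as in \eqref{I32-decomposition-yy}: $I_{321}$ controls the error $e^{nK_s(iu'/(\sigma_s\sqrt n)) + u'^2/2} - 1 = O(|u'|^3/\sqrt n)$, $I_{322}$ the error $e^{-il\sqrt n u'/\sigma_s} - 1 = O(|u'l|\sqrt n)$, and $I_{323}$ the main term.

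For $I_{323}$ I would decompose $\Psi_{s,x}(u_n) = \Pi_{s,iu_n}(\varphi)(x)\widehat{\phi}^+_{s,\ee}(u_n)\widehat\rho_{\ee^2}(u_n)$ into $h_1 + h_2 + h_3 + h_4$ in parallel with the proof of Proposition \ref{KeyPropo}: $h_1$ replaces $\Pi_{s,iu_n}(\varphi)(x)$ by $\pi_s(\varphi)$ (error $O(|u|/(\sqrt n|s|))\cdot\|\varphi\|_\gamma$ by \eqref{Ch7SpGapContrN}), $h_2$ replaces $\widehat\rho_{\ee^2}(u_n)$ by $1$ (Lipschitz, error $O(|u|/(\ee^4\sqrt n|s|))\cdot\|\varphi\|_\infty$), $h_3$ replaces $\widehat{\phi}^+_{s,\ee}(u_n)$ by $\widehat{\phi}^+_{s,\ee}(0)$, and $h_4$ is the constant. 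The only genuinely new computation is the Taylor analysis of $\widehat{\phi}^+_{s,\ee}(u_n) - \widehat{\phi}^+_{s,\ee}(0)$ from \eqref{Diffephi}: writing $\frac{2\sin(\ee u_n)}{u_n} - 2\ee = O(\ee^3 u_n^2)$ and $\frac{e^{i\ee u_n}}{-s-iu_n} - \frac{1}{-s} = \frac{e^{i\ee u_n}-1}{-s-iu_n} + \frac{1}{-s}\cdot\frac{iu_n}{-s-iu_n}$, one gets, after multiplying by $s$ and integrating against $e^{-u^2/2}$, contributions of the same shape as $J_{31}, J_{32}, J_{33}$ in Proposition \ref{KeyPropo} — using $|s| = O(t/\sqrt n)$, $|u_n/s| = |u|/(\sigma_s\sqrt n |s|) \asymp |u|/t$, and $\frac{u_n^2}{s^2+u_n^2} \leq \frac{cu^2}{t^2}$, plus the odd-function vanishing of the pure imaginary piece over the symmetric interval — yielding $|J_3| \leq (\frac{c}{\sqrt n} + \frac{c}{t^2})\|\varphi\|_\infty$. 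Finally $J_4 = e^{-2\ee|s|}\pi_s(\varphi)\int e^{-u^2/2}du$ (note $\widehat{\phi}^+_{s,\ee}(0) = 2\ee + \frac{1}{-s}$, so one must be careful about the normalization — in fact the factor $-s\sigma_s\sqrt n$ in front times the $1/(-s)$ from $\widehat{\phi}^+_{s,\ee}(0)$ produces the correct $\sigma_s\sqrt n$, and the $2\ee$ term contributes only a lower-order $O(\ee|s|\sqrt n) = O(\ee t)$ that can be absorbed), and $\int_{-\sigma_s\log n}^{\sigma_s\log n}e^{-u^2/2}du = \sqrt{2\pi}(1+O(1/n))$ together with $e^{-2\ee|s|} = 1 + O(t/\sqrt n)$ gives $|J_4 - \sqrt{2\pi}\pi_s(\varphi)| \leq \frac{ct}{\sqrt n}\|\varphi\|_\infty$. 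Collecting $I_1, I_2, I_{31}, I_{321}, I_{322}, J_1, \dots, J_4$ yields the claim. I expect the main obstacle to be purely bookkeeping: getting the normalization of $\widehat{\phi}^+_{s,\ee}(0)$ right so that the $-s\sigma_s\sqrt n$ prefactor cancels correctly against the $\frac{1}{-s}$ singularity while the $2\sin(\ee u)/u$ term is genuinely harmless, and verifying that all the $\ee$-dependent constants $c_\ee$ remain finite for each fixed $\ee\in(0,1)$.
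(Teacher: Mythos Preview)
Your proposal is correct and follows the paper's own proof essentially line for line: the same $I_1+I_2+I_3$ split via Lemma~\ref{Ch7perturbation thm}, the same $I_{31}/I_{32}$ cut at $|u|=n^{-1/2}\log n$, the same $I_{321}+I_{322}+I_{323}$ and $J_1+\cdots+J_4$ decompositions, and the same three-term Taylor analysis of $\widehat{\phi}^+_{s,\ee}(u_n)-\widehat{\phi}^+_{s,\ee}(0)$. One small slip: $\widehat{\phi}^+_{s,\ee}(0)=2\ee+\tfrac{1}{-s}$ carries no factor $e^{-2\ee|s|}$ (you are thinking of $\widehat{\psi}^-_{s,\ee}$), and after the change of variable the $2\ee$ piece contributes $(-s)\cdot 2\ee\cdot\sqrt{2\pi}=O(\ee t/\sqrt n)$, not $O(\ee t)$, so it is indeed absorbed into $\tfrac{ct}{\sqrt n}\|\varphi\|_\infty$ as you ultimately claim.
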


\begin{proof}
Since the proof of Proposition \ref{KeyPropo-02} can be carried out 
in an analogous way as that of Proposition \ref{KeyPropo}, we only sketch the main differences.

Without loss of generality, we assume that the target function $\varphi$ is non-negative. 
From Lemma \ref{Ch7perturbation thm}, we have the following decomposition:
with $\delta > 0$ small enough,
\begin{align} \label{Pro_Neg_s_I}
-s \sigma_s \sqrt{n} \int_{\mathbb{R}} e^{-iuln}  R_{s, iu}^n (\varphi)(x) 
\widehat{\phi}^+_{s,\ee}(u) \widehat\rho_{\varepsilon^{2}}(u) du  
 = I_1 + I_2 + I_3, 
\end{align}
where 
\begin{align*} 
I_1  & =  -s \sigma_s \sqrt{n} \int_{|u|\geq\delta}  e^{-iuln}  R^{n}_{s, iu}(\varphi)(x) 
              \widehat{\phi}^+_{s,\ee}(u)  \widehat\rho_{\varepsilon^{2}}(u) du,  \nonumber\\
I_2  & =  -s \sigma_s \sqrt{n}  \int_{|u|< \delta }  e^{-iuln}  N^{n}_{s, iu}(\varphi )(x) 
               \widehat{\phi}^+_{s,\ee}(u)  \widehat\rho_{\varepsilon^{2}}(u) du,   \nonumber\\
I_3  & =  -s \sigma_s \sqrt{n}  \int_{|u|<\delta}  e^{-iuln}  \lambda^{n}_{s, iu} \Pi_{s, iu}(\varphi )(x) 
              \widehat{\phi}^+_{s,\ee}(u)  \widehat\rho_{\varepsilon^{2}}(u) du.        
\end{align*}
Similarly to the proof of \eqref{equation t and s 1}, from \eqref{SaddleEqua-bis} one can verify that 
\begin{align}\label{equation_s_02}
- \frac{ \sigma t }{ \sqrt{n} } = \sum_{k=2}^{\infty} \frac{ \gamma_k }{ (k-1)! } s^{k-1},
\end{align}
where $\gamma_k = \Lambda^{(k)}(0).$
For any $t>1$ and sufficiently large $n$, 
the equation \eqref{equation_s_02} has a unique solution given by
\begin{align}\label{root_s_2}
s = \frac{1}{\gamma_2^{ 1/2 }} \left( -\frac{t}{\sqrt{n}} \right) 
    - \frac{\gamma_3}{ 2 \gamma_2^2 } \left( -\frac{t}{ \sqrt{n} }  \right)^2
    - \frac{\gamma_4\gamma_2-3\gamma_3^2}{6\gamma_2^{7/2} } \left( -\frac{t}{\sqrt{n}} \right)^3
    + \cdots. 
\end{align}
The series on the right-hand side of \eqref{root_s_2} is absolutely convergent,
and we can assume that $s \in (-s_0, 0)$ for sufficiently small constant $s_0>0$.

\textit{Estimate of $I_1$.} 
From \eqref{Diffephi} and the fact that $\rho_{\ee^2}$ is a density function on $\bb R$, we see that 
\begin{align}\label{Contrpsirho_bb}
\sup_{u \in \bb{R}} |\widehat {\phi}^-_{s,\ee}(u)|
\leq \widehat {\phi}^-_{s,\ee}(0) = \frac{1}{-s} + 2 \ee, 
\quad  
\sup_{u \in \bb{R}} |\widehat\rho_{\ee^{2}}(u)| 
\leq \widehat\rho_{\ee^{2}}(0) = 1.  
\end{align}
From \eqref{EstiRsit} and  \eqref{Contrpsirho_bb}, 
the desired bound for $I_1$ follows: 
\begin{align}\label{EstimI1n00_b}
|I_1| \leq  c e^{-c_{\ee} n } \|\varphi \|_{\gamma}. 
\end{align}

\textit{Estimate of $I_2$.}
Using the bound \eqref{Ch7SpGapContrN} and \eqref{Contrpsirho_bb}, one has
\begin{align}\label{Esti_I2n_bb}
 |I_2|  \leq  C e^{-cn}  \|\varphi \|_{\gamma}. 
\end{align}

\textit{Estimate of $I_3$.}
For brevity, we denote for any $s \in (-s_0, 0)$ and $x \in \bb P(V)$,
\begin{align}\label{Defpsisxt_bb}
\Psi_{s,x}(u):= \Pi_{s, iu}(\varphi)(x) 
\widehat {\phi}^+_{s,\varepsilon}(u) \widehat\rho_{\varepsilon^{2}}(u),  \quad  -\delta < u < \delta. 
\end{align}
Recalling that $K_s(iu)= \log \lambda_{s,iu}$, we decompose the term $I_3$ into two parts: 
\begin{align}  \label{DecompoJ1_bis}
I_3 
=  - s \sigma_s \sqrt{n}  \int_{ |u| < \delta  }  e^{n K_s( iu) - iu ln} \Psi_{s,x}(u) du
=  I_{31} + I_{32}, 
\end{align}
where 
\begin{align*}
&   I_{31} = -s \sigma_s \sqrt{n}  
  \int_{ n^{- \frac{1}{2}} \log n \leq |u| < \delta  }  e^{n K_s( iu) - iu ln} \Psi_{s,x}(u) du,  \nonumber\\
&   I_{32}  = -s \sigma_s \sqrt{n}  \int_{ |u| <  n^{-\frac{1}{2}} \log n }  e^{n K_s( iu) -  iu ln} \Psi_{s,x}(u) du. 
\end{align*}

\textit{Estimate of $I_{31}$.}
By \eqref{Ch7SpGapContrN} and \eqref{Contrpsirho_bb},  
there exists a constant $c >0$ such that for all $s \in (-s_0, 0)$, $|u| < \delta$, $x \in \bb P(V)$
and $\varphi \in \mathscr{B}_{\gamma}$,  
\begin{align} \label{Estipsi01_bis}
|\Psi_{s,x}(u)|  \leq  \frac{c}{-s} \|\varphi\|_{\gamma}.
\end{align}
Similarly to the proof of \eqref{EstiJ11}, it follows that 
there exists a constant $c >0$ such that for all $s \in (-s_0, 0)$, $x \in \bb P(V)$
and $\varphi \in \mathscr{B}_{\gamma}$,  
\begin{align} \label{EstiJ11_bis}
|I_{31}| \leq  c \sqrt{n}  \|\varphi\|_{\gamma}
\int_{ n^{ - \frac{1}{2} }  \log n  \leq |u| < \delta  } e^{- c n u^2 }  du  
\leq \frac{c}{ \sqrt{n} }  \|\varphi\|_{\gamma}. 
\end{align}

\textit{Estimate of $I_{32}$.}
Similarly to \eqref{I32-decomposition-yy}, by a change of variable $u' = u \sigma_s \sqrt{n} $, we get
\begin{align*}
 I_{32}  &  =  -s \int_{- \sigma_s \log n}^{\sigma_s \log n }
  e^{-\frac{u^2}{2}}  \exp \left\{ \sum_{k=3}^{\infty} \frac{ \Lambda^{(k)}(s) (iu)^k}{ \sigma_s^k \, k! \, n^{k/2-1}} \right\}  
  e^{-i \frac{l \sqrt{n}}{\sigma_s}  u } \Psi_{s,x} \left( \frac{u}{\sigma_s \sqrt{n}} \right) du  \nonumber\\
& = (-s) \int_{- \sigma_s \log n}^{\sigma_s \log n }  e^{-\frac{u^2}{2}}  
  \left[ \exp \left\{ \sum_{k=3}^{\infty} \frac{ \Lambda^{(k)}(s) (iu)^k}{ \sigma_s^k \, k! \, n^{k/2-1}} \right\} - 1 \right] 
  e^{-i \frac{l \sqrt{n}}{\sigma_s}  u }  \Psi_{s,x} \left( \frac{u}{\sigma_s \sqrt{n}} \right) du  \nonumber\\ 
& \qquad  +  (-s) \int_{- \sigma_s \log n}^{\sigma_s \log n }  e^{-\frac{u^2}{2}}   
  \left[ e^{-i \frac{l \sqrt{n}}{\sigma_s}  u }  - 1 \right] 
   \Psi_{s,x} \left( \frac{u}{\sigma_s \sqrt{n}} \right) du  \nonumber\\ 
& \qquad  +  (-s) \int_{- \sigma_s \log n}^{\sigma_s \log n }  e^{-\frac{u^2}{2}}    
   \Psi_{s,x} \left( \frac{u}{\sigma_s \sqrt{n}} \right) du  \nonumber\\
& = : I_{321} + I_{322} + I_{323}. 
\end{align*}
For $I_{321}$ and $I_{322}$, 
in the same way as in the proof of \eqref{Esti_I321_aa} and \eqref{Esti_I322_aa}, we have
\begin{align}\label{Esti_I321_I322_bb}
|I_{321}| \leq \frac{c}{\sqrt{n}} \|\varphi\|_{\gamma},
\qquad 
|I_{322}|  \leq  c |l| \sqrt{n} \|\varphi\|_{\gamma}. 
\end{align}
For $I_{323}$, we shall establish the following bound: there exist constants $c, c_{\ee} > 0$ such that
for all $s \in (-s_0, 0)$, $x\in \bb P(V)$, $t \in [t_n, o(\sqrt{n} )]$ and  
$\varphi \in \mathscr{B}_{\gamma}$, 
\begin{align}\label{Esti_I32_bb_02}
\left|  I_{323} -   \sqrt{ 2 \pi } \pi_s(\varphi)  \right| 
\leq   \left( \frac{ct}{\sqrt{n}}  +  \frac{c_{\ee}}{\sqrt{n}} +  \frac{c}{t^2}  \right)  \| \varphi \|_{\infty}
   +  \frac{c}{\sqrt{n}} \| \varphi \|_{\gamma}. 
\end{align}
For brevity, denote $u_n = \frac{u}{ \sigma_s \sqrt{n} }.$
In view of \eqref{Defpsisxt_bb}, we write 
\begin{align*}
\Psi_{s,x} (u_n ) =  h_1(u_n) + h_2(u_n) + h_3(u_n) + h_4(u_n), 
\end{align*}
where
\begin{align*}
&   h_1(u_n) =  \big[ \Pi_{s, i u_n}(\varphi)(x) -  \pi_s(\varphi) \big] 
\widehat {\phi}^+_{s,\varepsilon}(u_n)
\widehat\rho_{\varepsilon^{2}}(u_n),    \nonumber\\
&   h_2(u_n)  =    \pi_s(\varphi) \widehat {\phi}^+_{s,\varepsilon}(u_n)
\left[ \widehat\rho_{\varepsilon^{2}}(u_n) -  \widehat\rho_{\varepsilon^{2}}(0)  \right]  \nonumber\\
&   h_3(u_n) =   \pi_s(\varphi) 
 \left[ \widehat {\phi}^+_{s,\varepsilon}(u_n) - \widehat {\phi}^+_{s,\varepsilon}(0) \right]
   \widehat\rho_{\varepsilon^{2}}(0),  \nonumber\\
&   h_4(u_n) =   \pi_s(\varphi) \widehat {\phi}^+_{s,\varepsilon}(0) \widehat\rho_{\varepsilon^{2}}(0). 
\end{align*}
Then $I_{323}$ can be decomposed into four parts: 
\begin{align}\label{DecompoJ122_b}
I_{323} = J_{1} + J_{2} + J_{3} + J_{4}, 
\end{align}
where for $j = 1,2,3,4$, 
\begin{align*}
J_{j} =  -s  \int_{- \sigma_s \log n }^{ \sigma_s \log n } e^{-\frac{u^2}{2}} 
h_j(u_n) du. 
\end{align*}

\textit{Estimates of $J_1$ and $J_2$.}
Similarly to the proof of \eqref{Controlh1} and \eqref{Controlh2},
one can verify that 
\begin{align}\label{Esti_J1_J2_bb}
| J_1 | \leq  \frac{c}{ \sqrt{n} } \| \varphi \|_{\gamma},
\qquad 
|J_2| \leq \frac{c}{\varepsilon^4}  \frac{1}{ \sqrt{n} } \|\varphi\|_{\infty}
 \leq  \frac{c_{\ee}}{ \sqrt{n} } \|\varphi\|_{\infty}.
\end{align}

\textit{Estimate of $J_{3}$.}
By the definition of the function $\widehat {\phi}^+_{s,\varepsilon}$ (see \eqref{Diffephi}), we have 
\begin{align*}
 \widehat {\phi}^+_{s,\varepsilon}(u_n) - \widehat {\phi}^+_{s,\varepsilon}(0) 
&  = 2 \left( \frac{\sin \ee u_n}{u_n} - \ee \right)
    + \left( e^{i \ee u_n} \frac{1}{-s - i u_n} - \frac{1}{-s} \right) \nonumber\\
& =  2 \left( \frac{\sin \ee u_n}{u_n} - \ee \right) +
      \frac{ e^{i \ee u_n} - 1 }{-s - i u_n}
     + \left( \frac{1}{-s - i u_n} - \frac{1}{-s}  \right)   \nonumber\\
& =: A_1 (u)  +  A_2 (u) + A_3 (u). 
\end{align*}
Since $\widehat\rho_{\varepsilon^{2}}(0) = 1$, it follows that $J_3 = J_{31} + J_{32} + J_{33}$, 
where  
\begin{align*}
J_{3j} = - s \pi_s(\varphi) 
\int_{- \sigma_s \log n }^{ \sigma_s \log n } e^{-\frac{u^2}{2}}  A_j(u) du,  \quad  j = 1, 2, 3.  
\end{align*}
For $J_{31}$, since $| A_1 (u) | \leq c |u|^2/n$ and $|\pi_s(\varphi)| \leq c\|\varphi\|_{\infty}$, 
we have
\begin{align}\label{Esti_J31_bb}
| J_{31} | \leq c \frac{-s}{n} \|\varphi\|_{\infty} \leq \frac{C}{n} \|\varphi\|_{\infty}. 
\end{align} 
For $J_{32}$,
using the inequality $|e^z - 1| \leq e^{\Re z} |z|$, $z \in \bb C$, we get 
\begin{align*}
|A_2 (u)|  \leq  \frac{ \ee |u_n| }{|-s - i u_n|} \leq  \frac{ \ee |u_n| }{-s}. 
\end{align*}
Hence, 
\begin{align}\label{Esti_J32_bb}
|J_{32}| \leq \frac{c}{ \sqrt{n} } \|\varphi\|_{\infty}. 
\end{align}
For $J_{33}$, note that $A_3 (u) = \frac{-is u_n - u_n^2}{-s(s^2 + u_n^2)}$ and $s = O(\frac{-t}{\sqrt{n}})$. 
Using the fact that the integral of an odd function over a symmetric interval is identically zero, 
by elementary calculations we derive that 
\begin{align}\label{Esti_J33_bb}
|J_{33}| = \left| \pi_s(\varphi) \int_{- \sigma_s \log n }^{ \sigma_s \log n } e^{-\frac{u^2}{2}}  
     \frac{u_n^2}{s^2 + u_n^2} du \right| 
 \leq \frac{c}{t^2} \|\varphi\|_{\infty}. 
\end{align}
Putting together \eqref{Esti_J31_bb}, \eqref{Esti_J32_bb} and \eqref{Esti_J33_bb}, we obtain
\begin{align} \label{Esti_J3_bb}
J_{3} \leq  \frac{c}{ \sqrt{n} } \|\varphi\|_{\infty} +   \frac{c}{ t^2 }  \|\varphi\|_{ \infty }. 
\end{align}

\textit{Estimate of $J_{4}$.}
Similarly to the proof of \eqref{EstiIntegral00a},
 one has
\begin{align}\label{Esti_J4_bb}
\left|  J_4 - \sqrt{ 2 \pi } \pi_s(\varphi) \right| \leq  \frac{ct}{\sqrt{n}} \| \varphi \|_{\infty}. 
\end{align}
In view of \eqref{DecompoJ122_b}, combining \eqref{Esti_J1_J2_bb}, \eqref{Esti_J3_bb}, 
and \eqref{Esti_J4_bb}, 
we obtain the desired bound \eqref{Esti_I32_bb_02}. 
Putting together the bounds \eqref{EstimI1n00_b}, \eqref{Esti_I2n_bb}, \eqref{EstiJ11_bis}, \eqref{Esti_I321_I322_bb}
and \eqref{Esti_I32_bb_02}, 
we finish the proof of Proposition \ref{KeyPropo-02}.  
\end{proof}

\subsection{Proof of Theorem \ref{Thm-Cram-Entry_bb}} \label{Ch7_Sec_MDE_Entry}

To establish Theorem \ref{Thm-Cram-Entry_bb}, 
we first prove the following moderate deviation expansion in the normal range $y \in [0, o(n^{1/6})]$
for the couple $(G_n \!\cdot\! x, \log |\langle f, G_n v \rangle|)$ with a target function $\varphi$ on $G_n \!\cdot\! x$. 

\begin{theorem}\label{Thm-CramSca-02}
Assume \ref{Ch7Condi-Moment} and \ref{Ch7Condi-IP}. 
Then, there exists a constant $\gamma>0$ such that, as $n \to \infty$,  
uniformly in 
$t \in [0, o(n^{1/6})]$, $\varphi \in \mathscr{B}_{\gamma}$,
$x = \bb R v \in \bb P(V)$ and $y = \bb R f \in \bb P(V^*)$  with $\|v\| = \|f\| =1$,  
\begin{align*}
\qquad \quad  \frac{\bb{E}
\big[ \varphi(G_n \!\cdot\! x) \mathds{1}_{ \{ \log| \langle f,  G_n v \rangle | - n\lambda_1 \geq \sqrt{n} \sigma t \} } \big] }
{ 1-\Phi(t) }    
&  =  \nu(\varphi) +  \| \varphi \|_{\gamma}  o(1),  \nonumber\\
\frac{\bb{E}
\big[ \varphi(G_n \!\cdot\! x) \mathds{1}_{ \{ \log| \langle f,  G_n v \rangle | - n\lambda_1 \leq - \sqrt{n} \sigma t  \} } \big] }
{ \Phi(-t)  }   
&  = \nu(\varphi) +  \| \varphi \|_{\gamma}  o(1).  
\end{align*}
\end{theorem}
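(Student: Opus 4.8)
The plan is to prove Theorem \ref{Thm-CramSca-02} by exploiting the basic decomposition \eqref{Basic-scalar-decom}, namely
\begin{align*}
\log |\langle f, G_n v \rangle| = \sigma(G_n, x) + \log \delta(y, G_n \!\cdot\! x),
\end{align*}
and then applying the change of measure \eqref{Ch7basic equ1} with the parameter $s$ chosen to solve the saddle point equation \eqref{SaddleEqua} (resp. \eqref{SaddleEqua-bis} for the lower tail). First I would treat the upper tail. Fix a sequence $t_n \to \infty$ with $t_n = o(n^{1/6})$, and split the range $t \in [0, o(n^{1/6})]$ into the bounded-ish part $t \in [0, t_n]$, where the classical central limit theorem for the coefficients (see \eqref{Ch7_CLT_Entry0a}) together with an Edgeworth-type refinement and the Hölder regularity of $\nu$ suffices, and the genuinely moderate part $t \in [t_n, o(n^{1/6})]$, which is the heart of the matter. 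For the latter, I would write the probability as an expectation under $\bb Q_s^x$: using \eqref{Ch7basic equ1} and \eqref{Basic-scalar-decom},
\begin{align*}
\bb{E}\big[ \varphi(G_n \!\cdot\! x) \mathds{1}_{\{\log|\langle f, G_n v\rangle| - n\lambda_1 \geq \sqrt{n}\sigma t\}} \big]
= \kappa^n(s) r_s(x)\, \bb{E}_{\bb Q_s^x}\Big[ \tfrac{\varphi(G_n \cdot x)}{r_s(G_n \cdot x)} e^{-s\sigma(G_n,x)} \mathds{1}_{\{\sigma(G_n,x)+\log\delta(y,G_n\cdot x) - n\lambda_1 \geq \sqrt{n}\sigma t\}}\Big].
\end{align*}

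Next I would reduce the computation of this $\bb Q_s^x$-expectation to the Fourier-analytic estimates of Propositions \ref{KeyPropo} and \ref{KeyPropo-02}. The indicator and the factor $e^{-s\sigma(G_n,x)}$ combine into (essentially) $\psi_s(\sigma(G_n,x) - n\lambda_1 - \sqrt{n}\sigma t)$ with $\psi_s(w) = e^{-sw}\mathds 1_{\{w\geq 0\}}$, but the error term $\log\delta(y, G_n \cdot x)$ obstructs a direct application. Following the strategy announced in the introduction, I would decompose $\log\delta(y, G_n \cdot x)$ using a smooth partition of unity $(\chi_{n,k}^y)_{k\geq 1}$ on $\bb P(V)$ adapted to the level sets of $x' \mapsto \delta(y,x')$, so that $\varphi \chi_{n,k}^y$ is again a nice Hölder (indeed compactly supported smooth) function and $\log\delta(y, \cdot)$ is essentially constant $\approx -\ee k$ on $\supp \chi_{n,k}^y$. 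On each piece I would apply the smoothing inequality (Lemma \ref{estimate u convo}) to replace $\psi_s$ by $\psi_{s,\ee}^{\pm} * \rho_{\ee^2}$, pass to Fourier transforms via \eqref{Diffepsi}, and invoke Proposition \ref{KeyPropo} (with the shift $\ee k$ absorbed into the parameter $l$, which is precisely why the explicit dependence on $l$ there is indispensable) to get, for each $k$, the main term $\tfrac{\sqrt{2\pi}}{s\sigma_s\sqrt n}\pi_s(\varphi\chi_{n,k}^y)$ plus a controlled error. Summing over $k$, the Hölder regularity of the changed-measure invariant measure $\pi_s$ from Lemma \ref{Lem_Regu_pi_s} (equivalently, the bound \eqref{Regu_pi_s} applied under $\bb Q_s^x$) guarantees that $\sum_k \pi_s(\varphi\chi_{n,k}^y) = \pi_s(\varphi)(1+o(1))$ with no loss of mass, and that the accumulated errors remain $o(1)$ after normalization.

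Then I would assemble the asymptotics: using the analyticity and strict convexity of $\Lambda = \log\kappa$ near $0$, the expansion \eqref{root s 1} of the saddle point $s$ in powers of $t/\sqrt n$, and the standard tail asymptotics $1-\Phi(t) \sim \tfrac{1}{t\sqrt{2\pi}}e^{-t^2/2}$, one computes that $\kappa^n(s) e^{-s\sqrt n \sigma t - sn\lambda_1} \cdot \tfrac{\sqrt{2\pi}}{s\sigma_s\sqrt n}$ matches $(1-\Phi(t)) \cdot e^{\frac{t^3}{\sqrt n}\zeta(t/\sqrt n)}$ up to a $(1+o(1))$ factor — this is the classical Cramér computation carried over verbatim. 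Since in the range $t = o(n^{1/6})$ one has $\frac{t^3}{\sqrt n} = o(1)$, the exponential correction is $1+o(1)$ and the stated conclusion with main term $\nu(\varphi)$ (using $\pi_0 = \nu$ and continuity $s \mapsto \pi_s$) follows. The ratio $r_s(x)/r_s(G_n\cdot x)$ is handled by Lemma \ref{Ch7transfer operator}(c), which gives $r_s \to 1$ uniformly as $s\to 0$, so it contributes only a $1+o(1)$ factor. The lower tail is identical with $\phi_s$, \eqref{Diffephi}, \eqref{SaddleEqua-bis}, \eqref{root_s_2} and Proposition \ref{KeyPropo-02} in place of their upper-tail counterparts. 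The main obstacle, as flagged in the proof strategy, is precisely the error term $\log\delta(y, G_n\cdot x)$: it is not bounded, and its Hölder modulus alone is too weak; the partition-of-unity decomposition together with the \emph{exponential} Hölder regularity of $\pi_s$ (Lemma \ref{Lem_Regu_pi_s}) — uniform in $s$ near $0$ and in $y \in \bb P(V^*)$ — is what makes the patching work, and getting the book-keeping of the $k$-sum uniform in $t$, $x$, $y$ and $\varphi$ is the delicate point.
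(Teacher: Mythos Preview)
Your approach is valid in outline but takes a genuinely different, and considerably heavier, route than the paper. The paper's proof of Theorem \ref{Thm-CramSca-02} does \emph{not} use the change of measure, the partition of unity, the smoothing inequality, or Propositions \ref{KeyPropo}/\ref{KeyPropo-02} at all. Instead it observes that in the range $t = o(n^{1/6})$ the Cram\'er correction $e^{t^3 \zeta(t/\sqrt n)/\sqrt n} = 1 + o(1)$ is negligible, so one can simply sandwich the coefficient event between cocycle events and invoke the already-established Cram\'er expansion for the norm cocycle (Lemma \ref{Lem_Cramer_Cocy}). Concretely: the upper bound is immediate from $\log|\langle f, G_n v\rangle| \leq \sigma(G_n,x)$; for the lower bound one uses Lemma \ref{Lem_Regu_pi_s} with $s=0$ (under the original measure $\bb P$, not $\bb Q_s^x$) to get $\bb P(\log\delta(y, G_n\cdot x) \leq -\ee k) \leq Ce^{-ck}$, which lets one replace the coefficient threshold $\sqrt n\sigma t$ by the cocycle threshold $\sqrt n\sigma t + \ee k$ at the cost of an exponentially small error, and then Lemma \ref{Lem_Cramer_Cocy} handles the shifted cocycle probability. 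Two choices of $k$ (namely $k \asymp \log n$ for $t \leq \sqrt{\log n}$ and $k \asymp t^2$ for $t \in [\sqrt{\log n}, o(n^{1/6})]$) suffice to make $\frac{1-\Phi(t + \ee k/(\sigma\sqrt n))}{1-\Phi(t)} = 1 - o(1)$ while keeping $e^{-ck}/(1-\Phi(t)) = o(1)$.

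What each approach buys: the paper's argument is elementary and short, but crucially relies on the restricted range $t = o(n^{1/6})$ so that the Cram\'er series drops out; it cannot reach $t \in [n^\alpha, o(\sqrt n)]$, which is why the paper proves a separate Theorem \ref{Thm-Cram-Scalar-tag} using exactly the Fourier machinery you describe. Your approach, by contrast, essentially reproves Theorem \ref{Thm-Cram-Scalar-tag} and would in fact give the full range $[t_n, o(\sqrt n)]$ in one stroke, making the split into Theorems \ref{Thm-CramSca-02} and \ref{Thm-Cram-Scalar-tag} unnecessary. One caution: your treatment of the small-$t$ region ``CLT plus Edgeworth-type refinement'' is too vague --- any fixed-order Edgeworth expansion gives only an \emph{additive} error $O(n^{-k/2})$, which after division by $1-\Phi(t)$ blows up once $t \to \infty$; you would need either a Berry--Esseen bound for coefficients (available from the references) restricted to $t \leq c\sqrt{\log n}$ so that $1-\Phi(t) \geq n^{-c^2/2}$, or, more simply, the paper's sandwich argument for that part.
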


To prove Theorem \ref{Thm-CramSca-02}, 
we need the exponential H\"{o}lder regularity of the invariant measure $\nu$ (Lemma \ref{Lem_Regu_pi_s} with $s =0$)
and the following moderate deviation expansion for the norm cocycle 
$\sigma (G_n, x)$ established recently in \cite{XGL19b}.

\begin{lemma}[\cite{XGL19b}] \label{Lem_Cramer_Cocy}
Assume \ref{Ch7Condi-Moment} and \ref{Ch7Condi-IP}. 
Then, there exists a constant $\gamma>0$ such that, as $n \to \infty$,  
uniformly in $x\in \bb P(V)$, $t \in [0, o(\sqrt{n} )]$ and $\varphi \in \mathscr{B}_{\gamma}$, 
\begin{align*} 
\frac{\bb{E} \left[ \varphi(G_n \!\cdot\! x) \mathds{1}_{ \left\{ \sigma(G_n, x) - n \lambda_1 \geq \sqrt{n}\sigma t \right\} } \right] }
{ 1-\Phi(t) }
& =  e^{ \frac{t^3}{\sqrt{n}} \zeta( \frac{t}{\sqrt{n}} ) }
\left[ \nu(\varphi) +  \| \varphi \|_{\gamma} O\left( \frac{t+1}{\sqrt{n}} \right) \right],   \nonumber\\
\frac{\bb{E} \left[ \varphi(G_n \!\cdot\! x) \mathds{1}_{ \left\{ \sigma(G_n, x) - n \lambda_1 \leq - \sqrt{n}\sigma t  \right\} } \right] }
{ 1-\Phi(t) }
& =  e^{ -\frac{t^3}{\sqrt{n}} \zeta( -\frac{t}{\sqrt{n}} ) }
\left[ \nu(\varphi) +  \| \varphi \|_{\gamma} O\left( \frac{t+1}{\sqrt{n}} \right) \right].  
\end{align*}
\end{lemma}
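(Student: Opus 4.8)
\emph{Strategy.} The assertion is the classical Cram\'er--Petrov moderate deviation scheme carried out for the additive cocycle $\sigma(G_n,x)$ of the Markov chain $(G_n\!\cdot\!x)$. I split the range of $t$. When $t$ stays bounded, $1-\Phi(t)$ is bounded away from $0$ and $e^{t^3\zeta(t/\sqrt n)/\sqrt n}=1+O(n^{-1/2})$, so the statement reduces to a Berry--Esseen bound with a H\"older target at $s=0$, namely $\bb E[\varphi(G_n\!\cdot\!x)\mathds{1}_{\{\sigma(G_n,x)-n\lambda_1\geq\sqrt n\sigma t\}}]=(1-\Phi(t))\,\nu(\varphi)+\|\varphi\|_\gamma\,O(n^{-1/2})$ uniformly in $x$ and $t$; this follows from the spectral decomposition of $R_{0,iu}$ (Lemma \ref{Ch7perturbation thm} with $s=0$) together with the non-arithmeticity Lemma \ref{Lem-St-NonLatt}, via Esseen's smoothing inequality. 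From now on assume $t\to\infty$, $t=o(\sqrt n)$.

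\emph{Change of measure and the Cram\'er series.} Fix $s=s(t,n)>0$ solving \eqref{SaddleEqua}; by \eqref{root s 1}, $s\in(0,s_0)$ and $s=\sigma^{-1}t/\sqrt n\,(1+O(t/\sqrt n))$. Applying the change-of-measure identity \eqref{Ch7basic equ1} to $h=(\varphi/r_s)(G_n\!\cdot\!x)\,e^{-s\sigma(G_n,x)}\mathds{1}_{\{\sigma(G_n,x)-n\lambda_1\geq\sqrt n\sigma t\}}$, using $\kappa^n(s)=e^{n\Lambda(s)}$, $\lambda_1=\Lambda'(0)$, and that on that event $\sigma(G_n,x)-n\lambda_1=Z_n+\sqrt n\sigma t$ with $Z_n:=\sigma(G_n,x)-n\Lambda'(s)$ centred under $\bb Q_s^x$, one obtains
\begin{align*}
\bb E\big[\varphi(G_n\!\cdot\!x)\mathds{1}_{\{\sigma(G_n,x)-n\lambda_1\geq\sqrt n\sigma t\}}\big]
= e^{n(\Lambda(s)-s\Lambda'(s))}\,r_s(x)\,
\bb E_{\bb Q_s^x}\big[(\varphi/r_s)(G_n\!\cdot\!x)\,e^{-sZ_n}\mathds{1}_{\{Z_n\geq0\}}\big].
\end{align*}
Here $\varphi/r_s\in\mathscr{B}_{\gamma}$ with $\|\varphi/r_s\|_\gamma\leq C\|\varphi\|_\gamma$ uniformly in $s\in(0,s_0)$, since $r_s$ is $\gamma$-H\"older and bounded below. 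Expanding $\Lambda$ about $0$ and inserting \eqref{root s 1} identifies the prefactor by the classical Cram\'er computation: $n(\Lambda(s)-s\Lambda'(s))=-\tfrac{t^2}{2}+\tfrac{t^3}{\sqrt n}\zeta(\tfrac t{\sqrt n})$, with $\zeta$ the series \eqref{Ch7Def-CramSeri}.

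\emph{Asymptotics of the tilted expectation.} The heart of the proof is a Berry--Esseen bound with a H\"older target under $\bb Q_s^x$, uniform in $s$ near $0$ and in $x$: with $F_n(z):=\bb E_{\bb Q_s^x}[(\varphi/r_s)(G_n\!\cdot\!x)\mathds{1}_{\{Z_n\leq\sigma_s\sqrt n\,z\}}]$,
\begin{align*}
\sup_{z\in\bb R}\big|F_n(z)-\pi_s(\varphi/r_s)\,\Phi(z)\big|\leq C\|\varphi\|_\gamma/\sqrt n .
\end{align*}
This is obtained from Esseen's smoothing inequality: the relevant characteristic function is $\bb E_{\bb Q_s^x}[(\varphi/r_s)(G_n\!\cdot\!x)e^{iuZ_n}]=R^n_{s,iu}(\varphi/r_s)(x)$ by \eqref{Def_Rsz-nth}, which for $|u|<\delta$ is treated via the decomposition $R^n_{s,iu}=\lambda_{s,iu}^n\Pi_{s,iu}+N^n_{s,iu}$ of Lemma \ref{Ch7perturbation thm} (with $\Re\log\lambda_{s,iu}\leq -cu^2$ and $\Pi_{s,0}\varphi=\pi_s(\varphi)$), and for $|u|$ in a compact subset of $\bb R\setminus\{0\}$ via the non-arithmeticity Lemma \ref{Lem-St-NonLatt}, the far tail being negligible by the decay of the smoothing kernel. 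Granting this, integrate $\int_0^\infty e^{-sz}\,dF_n(z)$ by parts: the Gaussian part yields $\pi_s(\varphi/r_s)\int_0^\infty e^{-sz}\tfrac{1}{\sigma_s\sqrt{2\pi n}}e^{-z^2/(2\sigma_s^2 n)}\,dz$ directly, while the error part is $O(\|\varphi\|_\gamma/\sqrt n)$, being controlled by $\sup_z|F_n(z)-\pi_s(\varphi/r_s)\Phi(z)|$.

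\emph{Conclusion and the main obstacle.} Substituting $z=\sigma_s\sqrt n\,w$ gives $\int_0^\infty e^{-sz}\tfrac{1}{\sigma_s\sqrt{2\pi n}}e^{-z^2/(2\sigma_s^2 n)}\,dz=\tfrac1{\sqrt{2\pi}}\int_0^\infty e^{-\tau w-w^2/2}\,dw$ with $\tau:=s\sigma_s\sqrt n=t\,(1+O(t/\sqrt n))$, whereas $1-\Phi(t)=\tfrac{e^{-t^2/2}}{\sqrt{2\pi}}\int_0^\infty e^{-tv-v^2/2}\,dv$; hence the ratio of these two integrals equals $e^{t^2/2}(1+O(t/\sqrt n))$, the $O(1/t^2)$ Mills corrections cancelling between numerator and denominator. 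Combining the three pieces, using $r_s(x)=1+O(t/\sqrt n)$ uniformly in $x$ and $\pi_s(\varphi/r_s)=\nu_s(\varphi)/\nu_s(r_s)=\nu(\varphi)+\|\varphi\|_\gamma O(t/\sqrt n)$ (cf.\ \eqref{ExpCon-Qs}), and dividing by $1-\Phi(t)$, one obtains
\begin{align*}
\frac{\bb E[\varphi(G_n\!\cdot\!x)\mathds{1}_{\{\sigma(G_n,x)-n\lambda_1\geq\sqrt n\sigma t\}}]}{1-\Phi(t)}
= e^{\frac{t^3}{\sqrt n}\zeta(\frac t{\sqrt n})}\big[\nu(\varphi)+\|\varphi\|_\gamma O\big(\tfrac{t+1}{\sqrt n}\big)\big],
\end{align*}
which, patched with the bounded-$t$ estimate, covers $t\in[0,o(\sqrt n)]$; the lower-tail statement is obtained verbatim with $s<0$ solving \eqref{SaddleEqua-bis}. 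The main obstacle is the uniform-in-$s$ Berry--Esseen with a H\"older target under $\bb Q_s^x$: it requires Lemmas \ref{Ch7perturbation thm} and \ref{Lem-St-NonLatt} uniformly as $s$ ranges over a shrinking neighbourhood of $0$ and over all starting points $x$, and a careful tracking of the $t$-dependence of every remainder so that it fits into $\|\varphi\|_\gamma O((t+1)/\sqrt n)$; the remaining steps are the routine Cram\'er bookkeeping and the elementary identity relating the two Laplace-type integrals above.
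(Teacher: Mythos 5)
Your proposal is correct in outline and follows essentially the same route as the source of this statement: the paper does not prove Lemma \ref{Lem_Cramer_Cocy} but quotes it from \cite{XGL19b}, whose argument is exactly the Cram\'er--Petrov scheme you describe — tilting via \eqref{Ch7basic equ1} at the saddle point \eqref{SaddleEqua}, the identity \eqref{LambdaQ01} producing the Cram\'er series, and a Berry--Esseen bound with H\"older target under $\bb Q_s^x$, uniform in $s$ and $x$. Your remainder bookkeeping and the comparison of the two Laplace-type integrals are sound, and the single ingredient you leave unproved (the uniform-in-$s$ Berry--Esseen under the changed measure) is precisely what the cited reference supplies, so the sketch is an acceptable reconstruction of that proof.
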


\begin{proof}[Proof of Theorem \ref{Thm-CramSca-02}]
Without loss of generality, we assume that the target function $\varphi$ is non-negative. 
We only show the first expansion in Theorem \ref{Thm-CramSca-02}
since the proof of the second one can be carried out in a similar way.

The upper bound is a direct consequence of Lemma \ref{Lem_Cramer_Cocy}.
Specifically, since $\log |\langle f, G_n v \rangle| \leq \sigma(G_n, x)$
for $x = \bb R v \in \bb P(V)$ and $f \in V^*$ with $\|v\| = \|f\| = 1$, 
using the first expansion in Lemma \ref{Lem_Cramer_Cocy}, 
we get that there exists a constant $c>0$ such that 
for any $t \in [0, o(\sqrt{n} )]$, $\varphi \in \mathscr{B}_{\gamma}$, 
 $x = \bb R v \in \bb P(V)$ and $f \in V^*$ with $\|v\| = \|f\| =1$, 
\begin{align}\label{Pf-Cram-Scal-Upp}
  \frac{\bb{E} \left[ \varphi(G_n \!\cdot\! x) 
  \mathds{1}_{ \left\{ \log| \langle f,  G_n v \rangle | - n\lambda_1 \geq \sqrt{n}\sigma t \right\} } \right] }
{ 1-\Phi(t)  }    
\leq   e^{ \frac{t^3}{\sqrt{n}}\zeta (\frac{t}{\sqrt{n}} ) }
\left[ \nu(\varphi) + c \| \varphi \|_{\gamma} \frac{t +1}{\sqrt{n}} \right]. 
\end{align}

For the lower bound, we use Lemmas \ref{Lem_Regu_pi_s} and \ref{Lem_Cramer_Cocy}. 
Let $\varepsilon>0$. 
By Lemma \ref{Lem_Regu_pi_s} with $s =0$, 
there exist constants $c, C >0$ (depending on $\varepsilon$) such that for all $n \geq k \geq 1$, 
$x = \bb R v \in \bb P(V)$ and $f \in V^*$ with $\|v\| = \|f\| = 1$,
\begin{align*}
\mathbb{P} \Big( \log |\langle f, G_n v \rangle| -  \sigma(G_n, x) \leq -\ee k \Big) \leq C e^{ -c k }. 
\end{align*}
Using this inequality, we get
\begin{align}\label{Pf_Cra_ine_jjj}
& \bb{E} \left[ \varphi(G_n \!\cdot\! x) 
  \mathds{1}_{ \left\{ \log| \langle f,  G_n v \rangle | - n\lambda_1 \geq \sqrt{n}\sigma t \right\} } \right]  \nonumber\\
 & \geq  \mathbb{E} \left[ \varphi(G_n \!\cdot\! x) 
 \mathds{1}_{ \left\{ \log| \langle f,  G_n v \rangle | - n\lambda_1 \geq \sqrt{n}\sigma t \right\} }
 \mathds{1}_{ \left\{ \log |\langle f, G_n v \rangle| -  \sigma(G_n, x)  > -\ee k  \right\} }  \right]  \nonumber\\
 & \geq   \mathbb{E} \left[ \varphi(G_n \!\cdot\! x) 
 \mathds{1}_{ \left\{ \sigma (G_n, x) - n\lambda_1 \geq \sqrt{n}\sigma t + \varepsilon k  \right\} }
 \mathds{1}_{  \left\{ \log |\langle f, G_n v \rangle| -  \sigma(G_n, x) > -\ee k  \right\} }  \right]  \nonumber\\
 &  \geq   \mathbb{E} \left[ \varphi(G_n x) 
 \mathds{1}_{ \left\{ \sigma(G_n, x) - n\lambda_1 \geq \sqrt{n}\sigma t + \ee k  \right\} }  \right]
    - C e^{ -c k } \| \varphi \|_{\infty}.
\end{align}
By Lemma \ref{Lem_Cramer_Cocy}, we have, as $n \to \infty$, 
uniformly in $x \in \bb P(V)$, $t \in [0, \sqrt{\log n}]$ and $\varphi \in \mathscr{B}_{\gamma}$, 
\begin{align}\label{NormUpp02}
\frac{ \mathbb{E} \left[ \varphi(G_n \!\cdot\! x) 
 \mathds{1}_{ \left\{ \sigma(G_n, x) - n\lambda_1 \geq \sqrt{n}\sigma t + \varepsilon k  \right\} }  \right] }
{ 1-\Phi(t_1)  }
=   \nu(\varphi) + \| \varphi \|_{\gamma} O \left( \frac{t_1^3  + 1}{\sqrt{n}} \right), 
\end{align}
where $t_1 = t + \frac{\ee k}{ \sigma \sqrt{n} }$. 
Take $k = \floor{A \log n}$ in \eqref{Pf_Cra_ine_jjj}, 
where $A >0$ is a  constant whose value will be chosen sufficiently large. 
We claim that uniformly in $t \in [0, \sqrt{\log n}]$, 
\begin{align}\label{Inequ_Normal_aaa}
1 > \frac{1 - \Phi(t_1) }{1 - \Phi(t) }
  = 1 - \frac{ \int_{t}^{t_1} e^{-\frac{u^2}{2}} du }{ \int_{t}^{\infty} e^{-\frac{u^2}{2}} du } 
  > 1 - c \frac{ t + 1 }{\sqrt{n}} \log n.  
\end{align}
Indeed, when $y \in [0,2]$, the inequality \eqref{Inequ_Normal_aaa} holds
due to the fact that $t_1 = t + \frac{\ee k}{ \sigma \sqrt{n} }$ and $k = \floor{A \log n}$;
when $t \in [2, \sqrt{\log n}]$, we can use the inequality
$e^{\frac{t^2}{2}} \int_{t}^{\infty} e^{-\frac{u^2}{2}} du \geq \frac{1}{t} - \frac{1}{t^3} > \frac{1}{2t}$ to get
\begin{align*}
1 - \frac{ \int_{t}^{t_1} e^{-\frac{u^2}{2}} du }{ \int_{t}^{\infty} e^{-\frac{u^2}{2}} du } 
  > 1 - \frac{ (t_1 - t) e^{- t^2/2} }{\frac{1}{2t} e^{- t^2/2}}
  > 1 - c \frac{ t + 1 }{\sqrt{n}} \log n, 
\end{align*}
so that \eqref{Inequ_Normal_aaa} also holds. 
It is easy to check that $\frac{t_1^3  + 1}{\sqrt{n}} = O( \frac{t^3 + 1}{\sqrt{n}} )$,
uniformly in $t \in [0, \sqrt{\log n}]$. 
Consequently, there exists a constant $c >0$ such that for any 
$x \in \bb P(V)$, $t \in [0, \sqrt{\log n}]$ and $\varphi \in \mathscr{B}_{\gamma}$, 
\begin{align*}
\frac{ \mathbb{E} \left[ \varphi(G_n \!\cdot\! x) 
 \mathds{1}_{ \left\{ \sigma(G_n, x) - n\lambda_1 \geq \sqrt{n}\sigma t + \varepsilon k  \right\} }  \right] }
{ 1-\Phi(t)  }
\geq  \nu(\varphi) -  c \| \varphi \|_{\gamma} \frac{ t + 1 }{\sqrt{n}} \log n.
\end{align*}
This, together with \eqref{Pf_Cra_ine_jjj} and the fact that $e^{-ck}/[1 - \Phi(t)]$ decays to $0$ faster than $\frac{1}{n}$
(by taking $A>0$ to be sufficiently large), 
implies that there exists a constant $c >0$ such that for any $t \in [0, \sqrt{\log n}]$, $\varphi \in \mathscr{B}_{\gamma}$, 
$x = \bb R v \in \bb P(V)$ and $f \in V^*$ with $\|v\| = \|f\| =  1$,
\begin{align} \label{Pf_Cram_Low_logn}
\frac{\bb{E} \left[ \varphi(G_n \!\cdot\! x) 
  \mathds{1}_{ \left\{ \log| \langle f,  G_n v \rangle | - n\lambda_1 \geq \sqrt{n}\sigma t  \right\} } \right] }
{ 1-\Phi(t)  }   
\geq   \nu(\varphi) - c \| \varphi \|_{\gamma} \frac{t + 1}{\sqrt{n}} \log n. 
\end{align}

It remains to prove the lower bound when $t \in [\sqrt{\log n}, o(n^{1/6})]$. 
In the same way as in \eqref{NormUpp02}, we get that, with $t_1 = t + \frac{\ee k}{ \sigma \sqrt{n} }$, 
uniformly in $x \in \bb P(V)$, $t \in [\sqrt{\log n}, o(n^{1/6})]$ and $\varphi \in \mathscr{B}_{\gamma}$, 
\begin{align}\label{NormUpp_yy}
\frac{ \mathbb{E} \left[ \varphi(G_n \!\cdot\! x) 
 \mathds{1}_{ \left\{ \sigma(G_n, x) - n\lambda_1 \geq \sqrt{n}\sigma t + \varepsilon k  \right\} }  \right] }
{ 1-\Phi(t_1)  }
=   \nu(\varphi) + \| \varphi \|_{\gamma} O \left( \frac{t_1^3  + 1}{\sqrt{n}} \right).
\end{align}
We take $k = \floor{A t^2}$ in \eqref{Pf_Cra_ine_jjj}, where $A >0$ is a fixed sufficiently large constant. 
Using the inequality 
$\frac{1}{t} \geq e^{\frac{t^2}{2}} \int_{t}^{\infty} e^{-\frac{u^2}{2}} du \geq \frac{1}{t} - \frac{1}{t^3}$
for $t > 0$, by elementary calculations, 
we get that uniformly in $t \in [\sqrt{\log n}, o(n^{1/6})]$, 
\begin{align*}
1 > \frac{1 - \Phi(t_1) }{1 - \Phi(t) }
 =   \frac{ \int_{t_1}^{\infty} e^{-\frac{u^2}{2}} du }{ \int_{t}^{\infty} e^{-\frac{u^2}{2}} du }
& \geq  \frac{ ( \frac{1}{t_1} - \frac{1}{t_1^3} ) e^{ - \frac{t_1^2}{2} }  }{ \frac{1}{t} e^{ - \frac{t^2}{2} } }
  =   \frac{t}{t_1} \left( 1 - \frac{1}{t_1^2} \right) e^{ \frac{t^2}{2} - \frac{t_1^2}{2} }   \notag\\
&  >  \frac{1}{ 1 + \frac{\ee k}{ \sigma \sqrt{n} t } }  \left( 1 - \frac{1}{t^2} \right) 
       e^{ - \frac{\ee k}{ \sigma \sqrt{n} }t - \frac{\ee^2 k^2}{ 2 \sigma^2 n } }. 
\end{align*}
Using the inequalities $\frac{1}{1+x} \geq 1 - x$ and $e^{-x} \geq 1 -x$ for $x \geq 0$,
and taking into account that $k = \floor{A t^2}$, we get that, as $n \to \infty$, 
uniformly in $t \in [\sqrt{\log n}, o(n^{1/6})]$, 
\begin{align}\label{Rate-Phi-t}
1 > \frac{1 - \Phi(t_1) }{1 - \Phi(t) } \geq 
 \left( 1 -  \frac{\ee k}{ \sigma \sqrt{n} t }  \right) \left( 1 - \frac{1}{t^2} \right)  
    \left( 1 - \frac{\ee k}{ \sigma \sqrt{n} }t - \frac{\ee^2 k^2}{ 2 \sigma^2 n } \right)
    = 1 - |o(1)|.  
\end{align}
Taking into account that $\frac{t_1^3  + 1}{\sqrt{n}} = O( \frac{t^3  + 1}{\sqrt{n}} )$
and that $e^{-c A t^2}/[1 - \Phi(t)]$ decays to $0$ faster than $\frac{1}{n}$ (by taking $A>0$ to be sufficiently large),
from \eqref{Pf_Cra_ine_jjj}, \eqref{NormUpp_yy} and \eqref{Rate-Phi-t}
we deduce that there exists a constant $c >0$ such that for any
 $t \in [\sqrt{\log n}, o(n^{1/6})]$, $\varphi \in \mathscr{B}_{\gamma}$, 
$x = \bb R v \in \bb P(V)$ and $f \in V^*$ with $\|v\| = \|f\| = 1$,  
\begin{align} \label{Pf_Cram_Low_n16}
\frac{\bb{E} \left[ \varphi(G_n \!\cdot\! x) 
  \mathds{1}_{ \left\{ \log| \langle f,  G_n v \rangle | - n\lambda_1 \geq \sqrt{n}\sigma t  \right\} } \right] }
{ 1-\Phi(t)  }   
\geq   \nu(\varphi) -  \| \varphi \|_{\gamma} |o(1)|. 
\end{align}
Combining \eqref{Pf-Cram-Scal-Upp}, \eqref{Pf_Cram_Low_logn} and \eqref{Pf_Cram_Low_n16} 
finishes the proof of Theorem \ref{Thm-CramSca-02}. 
\end{proof}

As in \cite{DKW21, XGL22a}, we shall use a partition of the unity on the projective space $\bb P(V)$.  
Let $U$ be the uniform distribution function on the interval $[0,1]$, namely,
$U(t)=t$ for $t\in [0,1]$, $U(t)=0$ for $t <0$ 
and $U(t)=1$ for $t > 1$. 
 Let $a \in (0,\frac{1}{2}]$ be a constant.
For any integer $k\geq 0$, define 
\begin{align*} 
U_{k}(t)= U\left(\frac{t-(k-1) a}{a}\right),  \qquad 
h_{k}(t)=U_{k}(t) - U_{k+1}(t),  \quad  t \in \bb R. 
\end{align*}
Since $U_{m} = \sum_{k=m}^\infty h_{k}$ for $m\geq 0$,  we have that for any $t\geq 0$ and $m\geq0$,
\begin{align} \label{unity decomposition h-001-222}
\sum_{k=0}^{\infty} h_{k} (t) =1, \quad \sum_{k=0}^{m} h_{k} (t) + U_{m+1} (t) =1.
\end{align}
Set, for any $x \in \bb P(V)$ and $y \in \bb P(V^*)$,  
\begin{align}\label{Def-chi-nk-222}
\chi_{k}^y(x)=h_{k}(-\log \delta(y, x))  \quad  \mbox{and}  \quad 
\overline \chi_{k}^y(x)= U_{k} ( -\log \delta(y, x) ). 
\end{align}
From \eqref{unity decomposition h-001-222}
 we have the following partition of the unity  on $\bb P(V)$: for any $x\in \bb P (V)$, $y \in \bb P(V^*)$ and $m\geq 0$,
\begin{align} \label{Unit-partition001-222}
\sum_{k=0}^{\infty} \chi_{k}^y (x) =1, \quad 
\sum_{k=0}^{m} \chi_{k}^y (x) + \overline \chi_{m+1}^y (x) =1.
\end{align}
Let $\supp (\chi_{k}^y)$ be the support of the function $\chi_{k}^y$.  
It is easy to see that for any $k\geq 0$ and $y\in \bb P(V^*)$,
\begin{align} \label{on the support on chi_k-001-222}
 -\log \delta(y, x) \in [a (k-1), a (k+1)] \quad \mbox{for any}\ x\in \supp (\chi_{k}^y). 
\end{align}
In the same way as in the proof of \cite[Lemma 4.8]{XGL22a}, one can show that
there exists a constant $c>0$ such that 
for any $\gamma\in(0,1]$, 
 $k\geq 0$ and $y\in \bb P(V^*)$, it holds
  $\chi_{k}^y\in \scr B_{\gamma}$ and 
\begin{align} \label{Holder prop ohCHI_k-001-222}
\| \chi_{k}^y \|_{\gamma} \leq \frac{c e^{\gamma k a}}{a^\gamma}.
\end{align}

Now we shall apply Propositions \ref{KeyPropo} and \ref{KeyPropo-02}
to establish the following moderate deviation expansion when $t \in [n^{\alpha}, o(n^{1/2})]$ for any $\alpha \in (0, 1/2)$.

\begin{theorem}\label{Thm-Cram-Scalar-tag}
Assume \ref{Ch7Condi-Moment} and \ref{Ch7Condi-IP}. 
Then, there exists a constant $\gamma>0$ such that
 for any $\varphi \in \mathscr{B}_{\gamma}$ and $\alpha \in (0, 1/2)$,  we have,  as $n \to \infty$, 
uniformly in $t \in [n^{\alpha}, o(\sqrt{n} )]$,
 $x = \bb R v \in \bb P(V)$ and $f \in V^*$ with $\|v\| = \|f\| =1$, 
\begin{align}
\frac{\bb{E} \left[ \varphi(G_n \!\cdot\! x) 
\mathds{1}_{ \left\{ \log| \langle f,  G_n v \rangle | - n\lambda_1 \geq \sqrt{n} \sigma t \right\} } \right] }
{ 1-\Phi(t) }    
& =  e^{ \frac{t^3}{\sqrt{n}}\zeta (\frac{t}{\sqrt{n}} ) }
\Big[ \nu(\varphi) +   o(1) \Big],  \label{Cramer-Coeffi-upper} \\
\frac{\bb{E}  \left[ \varphi(G_n \!\cdot\! x) 
\mathds{1}_{ \left\{ \log| \langle f,  G_n v \rangle | - n\lambda_1 \leq - \sqrt{n} \sigma t \right\} } \right] }
{ \Phi(-t)  }   
& =  e^{ - \frac{t^3}{\sqrt{n}}\zeta (-\frac{t}{\sqrt{n}} ) }
\Big[ \nu(\varphi) +  o(1) \Big].   \label{Cramer-Coeffi-lower}
\end{align}
\end{theorem}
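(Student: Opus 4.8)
The plan is to prove the two expansions separately, and within each to split into an easy bound coming from the norm cocycle and a hard bound coming from a change of measure together with the partition of the unity and Propositions \ref{KeyPropo}--\ref{KeyPropo-02}. Assume without loss of generality that $\varphi\ge 0$. For the upper tail \eqref{Cramer-Coeffi-upper}, since $\log|\langle f,G_nv\rangle|\le\sigma(G_n,x)$ by $\delta(x,y)\le1$, the indicator in the numerator is dominated by $\mathds{1}_{\{\sigma(G_n,x)-n\lambda_1\ge\sqrt n\sigma t\}}$, so the first expansion of Lemma \ref{Lem_Cramer_Cocy} gives at once, after dividing by $1-\Phi(t)$ and using that $\tfrac{t+1}{\sqrt n}=o(1)$ on $[n^\alpha,o(\sqrt n)]$, the upper bound $\le e^{\frac{t^3}{\sqrt n}\zeta(t/\sqrt n)}(\nu(\varphi)+o(1))$. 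Symmetrically, for the lower tail \eqref{Cramer-Coeffi-lower} the same inequality gives the matching \emph{lower} bound from the second expansion of Lemma \ref{Lem_Cramer_Cocy}. Hence only the lower bound in \eqref{Cramer-Coeffi-upper} and the upper bound in \eqref{Cramer-Coeffi-lower} require work; I describe the former, the latter being entirely parallel with $s<0$, $\phi_s$, $\widehat\phi^+_{s,\ee}$ and Proposition \ref{KeyPropo-02} in place of $s>0$, $\psi_s$, $\widehat\psi^-_{s,\ee}$ and Proposition \ref{KeyPropo}.

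First I would perform the change of measure \eqref{Ch7basic equ1}. Fix $t$ in the range and let $s=s_n>0$ solve the saddle point equation \eqref{SaddleEqua}, so that $n\Lambda'(s)=n\lambda_1+\sqrt n\sigma t$ and $s=O(t/\sqrt n)\to0$. Writing $y=\bb R f$, $D_n=-\log\delta(y,G_n\!\cdot\!x)\ge0$ and $W_n=\sigma(G_n,x)-n\Lambda'(s)$, the identity \eqref{Basic-scalar-decom} turns the event $\{\log|\langle f,G_nv\rangle|-n\lambda_1\ge\sqrt n\sigma t\}$ into $\{W_n\ge D_n\}$, and \eqref{Ch7basic equ1} yields
\begin{align*}
\bb E\big[\varphi(G_n\!\cdot\!x)\mathds{1}_{\{\log|\langle f,G_nv\rangle|-n\lambda_1\ge\sqrt n\sigma t\}}\big]
=\kappa^n(s)\,r_s(x)\,e^{-sn\lambda_1-s\sqrt n\sigma t}\,
\bb E_{\bb Q_s^x}\Big[\tfrac{\varphi}{r_s}(G_n\!\cdot\!x)\,e^{-sW_n}\mathds{1}_{\{W_n\ge D_n\}}\Big],
\end{align*}
where $e^{-sW_n}\mathds{1}_{\{W_n\ge D_n\}}=\delta(y,G_n\!\cdot\!x)^s\,\psi_s(W_n-D_n)$ with $\psi_s$ as in \eqref{Def-psi-see}. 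The scalar prefactor equals $e^{-n\Lambda^*(\Lambda'(s))}r_s(x)$, where $\Lambda^*$ is the Legendre transform of $\Lambda$, and the standard Cram\'er computation (the one underlying Lemma \ref{Lem_Cramer_Cocy}, cf.\ \cite{XGL19b,Pet75}) gives $e^{-n\Lambda^*(\Lambda'(s))}=(1-\Phi(t))\,e^{\frac{t^3}{\sqrt n}\zeta(t/\sqrt n)}\,s\sigma_s\sqrt{2\pi n}\,(1+o(1))$ uniformly in the range; since $r_s(x)=1+O(s)$, the claim reduces to
$s\sigma_s\sqrt{2\pi n}\,\bb E_{\bb Q_s^x}\big[\tfrac{\varphi}{r_s}\delta(y,\cdot)^s\psi_s(W_n-D_n)\big]\ge\nu(\varphi)-o(1)$, uniformly in $x=\bb R v$, $y=\bb R f$ and $\varphi\in\mathscr B_\gamma$.

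Next, I would insert the partition of the unity $(\chi^y_k)_{k\ge0}$ of \eqref{Def-chi-nk-222}, with a small constant $a$ and a truncation level $m=m_n\asymp\log n$ to be fixed. On $\supp\chi^y_k$ one has $D_n\in[a(k-1),a(k+1)]$ by \eqref{on the support on chi_k-001-222}, hence there $e^{-sW_n}\mathds{1}_{\{W_n\ge D_n\}}\ge e^{-sa(k+1)}\psi_s(W_n-a(k+1))$; applying the smoothing inequality Lemma \ref{estimate u convo} to $\psi_s$, Fourier inverting, and using the cocycle identity \eqref{Def_Rsz-nth}, the $k$-th piece becomes
\begin{align*}
\bb E_{\bb Q_s^x}\big[\tfrac{\varphi\chi^y_k}{r_s}(G_n\!\cdot\!x)\,(\psi^-_{s,\ee}*\rho_{\ee^2})(W_n-a(k+1))\big]
=\tfrac1{2\pi}\int_{\bb R}e^{-iul_kn}\,R^n_{s,iu}\big(\tfrac{\varphi\chi^y_k}{r_s}\big)(x)\,\widehat\psi^-_{s,\ee}(u)\,\widehat\rho_{\ee^2}(u)\,du,
\end{align*}
with $l_kn=a(k+1)$, to which Proposition \ref{KeyPropo} applies; crucially one keeps the $\pi_s(\cdot)$-proportional part of the error (the terms carrying $\tfrac t{\sqrt n}$ and $\tfrac1{t^2}$, which originate from the factor $e^{-2\ee s}$ and from the term $J_{32}$ in the proof of that proposition) in the form $|\pi_s(\tfrac{\varphi\chi^y_k}{r_s})|$ rather than bounding it by $\|\cdot\|_\infty$. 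Summing over $0\le k\le m$: the main terms give $\tfrac1{s\sigma_s\sqrt{2\pi n}}\,\pi_s\!\big(\tfrac{\varphi}{r_s}\sum_{k\le m}e^{-sa(k+1)}\chi^y_k\big)$, and since $e^{-sa(k+1)}\le\delta(y,\cdot)^s\le e^{2sa}e^{-sa(k+1)}$ on $\supp\chi^y_k$ while $\sum_{k\le m}\chi^y_k=1-\overline\chi^y_{m+1}$, this equals $\tfrac1{s\sigma_s\sqrt{2\pi n}}\pi_s(\tfrac{\varphi}{r_s}\delta(y,\cdot)^s)(1+O(s))$ minus a tail; the exponential H\"older regularity $\sup_{s,y}\pi_s(\delta(y,\cdot)^{-\eta})<\infty$ (the consequence of Lemma \ref{Lem_Regu_pi_s}) shows both that $\pi_s(\delta(y,\cdot)^s\overline\chi^y_{m+1})=O(e^{-\eta am})\|\varphi\|_\infty=o(1)\|\varphi\|_\infty$ once $m\asymp\log n$, and that $\pi_s(\tfrac{\varphi}{r_s}\delta(y,\cdot)^s)=\nu(\varphi)+O(s)\|\varphi\|_\infty=\nu(\varphi)+o(1)$ uniformly in $y$ (using $\pi_s\to\nu$, $r_s\to1$, $\pi_s(-\log\delta(y,\cdot))=O(1)$). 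For the error terms, after multiplying through by the prefactor $s\sigma_s\sqrt{2\pi n}$ one is left with $\tfrac1{\sqrt{2\pi}}\sum_{k\le m}e^{-sa(k+1)}$ times the Proposition \ref{KeyPropo} error: its $\pi_s$-proportional part telescopes, exactly as above, to $(\tfrac{ct}{\sqrt n}+\tfrac c{t^2})\,c\|\varphi\|_\infty=o(1)$ on $[n^\alpha,o(\sqrt n)]$; the $c_\ee/\sqrt n$ part contributes $\lesssim\tfrac{c_\ee m}{\sqrt n}\|\varphi\|_\infty=o(1)$; and the $\|\cdot\|_\gamma$-part, using $\|\tfrac{\varphi\chi^y_k}{r_s}\|_\gamma\lesssim\|\varphi\|_\gamma e^{\gamma ka}/a^\gamma$ from \eqref{Holder prop ohCHI_k-001-222} and $|l_k|\sqrt n=a(k+1)/\sqrt n$, contributes $\lesssim a^{-\gamma}\|\varphi\|_\gamma\,m^2e^{\gamma am}/\sqrt n\lesssim\|\varphi\|_\gamma(\log n)^2n^{O(\gamma)}/\sqrt n$, which is $o(1)$ provided $\gamma$ is chosen small enough relative to $\eta$. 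Collecting, $s\sigma_s\sqrt{2\pi n}\,\bb E_{\bb Q_s^x}[\cdots]\ge\nu(\varphi)-o(1)$, and with the easy upper bound this proves \eqref{Cramer-Coeffi-upper}; \eqref{Cramer-Coeffi-lower} follows by the same argument with the changes indicated, noting that for $s<0$ the needed integrability $\pi_s(\delta(y,\cdot)^s)=\pi_s(\delta(y,\cdot)^{-|s|})<\infty$ still holds because $|s|<\eta$.

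The step I expect to be the main obstacle is this last patching of the Fourier expansions across the partition — precisely, choosing $m_n$ and $\gamma$ so that \emph{simultaneously} (i) the discarded tail $\overline\chi^y_{m+1}$ is negligible, which forces $m_n\to\infty$ and is possible only because the exponential H\"older regularity of $\pi_s$ from Lemma \ref{Lem_Regu_pi_s} decays at the \emph{fixed} rate $\eta$ rather than at the tiny rate $s$; (ii) the H\"older norms $\|\chi^y_k\|_\gamma\asymp e^{\gamma ka}$ stay controlled over $0\le k\le m_n$, which forces $m_n$ to grow no faster than $\log n$ and $\gamma$ to be small; and (iii) the $\pi_s(\varphi)$-proportional error terms of Proposition \ref{KeyPropo} are kept in $\pi_s$-form so that $\sum_ke^{-sa(k+1)}\pi_s(\chi^y_k\,\cdot\,)$ telescopes back to $\pi_s(\delta(y,\cdot)^s\,\cdot\,)=O(1)$ instead of blowing up like $1/(sa)$. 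Making this interplay work uniformly in $t\in[n^\alpha,o(\sqrt n)]$, $x$, $y$ and $\varphi$ is the technical heart; everything else is bookkeeping with the estimates already recorded in Propositions \ref{KeyPropo}--\ref{KeyPropo-02} and Lemmas \ref{Lem_Regu_pi_s}, \ref{Lem_Cramer_Cocy} and \ref{estimate u convo}.
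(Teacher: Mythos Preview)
Your overall strategy matches the paper's: the same change of measure \eqref{Ch7basic equ1}, the same partition of the unity $(\chi_k^y)$, the same reduction to Propositions \ref{KeyPropo}--\ref{KeyPropo-02} via the smoothing inequality (Lemma \ref{estimate u convo}), and the same easy bounds from Lemma \ref{Lem_Cramer_Cocy} using $\log|\langle f,G_nv\rangle|\le\sigma(G_n,x)$.

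The difference is in how you patch the pieces over $k$. You propose to sum directly over $0\le k\le m_n$ and, to keep the $\|\varphi^y_{s,k}\|_\infty$-errors from accumulating like $m_n$ copies of $(ct/\sqrt n+c/t^2)\|\varphi\|_\infty$, you want to refine Proposition \ref{KeyPropo} by keeping those error terms in the form $|\pi_s(\varphi^y_{s,k})|$ so that $\sum_k e^{-sa(k+1)}\pi_s(\varphi^y_{s,k})$ telescopes to $\pi_s(\tfrac{\varphi}{r_s}\delta(y,\cdot)^s)=O(1)$. That refinement is legitimate (inspection of $J_2,J_3,J_4$ in the proof of Proposition \ref{KeyPropo} shows all the $\|\varphi\|_\infty$-terms indeed come with a factor $\pi_s(\varphi)$), and your argument can be made to work. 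But the paper sidesteps this entirely with Fatou's lemma: since $B_{n,k}\ge D_{n,k}\ge 0$, one passes to $\sum_{k\ge0}\liminf_{n\to\infty}\tfrac{B_{n,k}-D_{n,k}}{e^{t^2/2}[1-\Phi(t)]}\mathds 1_{\{k\le M_n\}}$ and evaluates the limit \emph{for each fixed $k$}, where every error term in \eqref{KeyPropUpper} vanishes trivially (no telescoping needed, and Proposition \ref{KeyPropo} is used exactly as stated). The ``main obstacle'' you flag in (iii) is therefore an artefact of your direct-summation route; the paper's route makes it disappear. What your approach buys is a slightly more quantitative control of the sum, at the price of reopening the proof of Proposition \ref{KeyPropo}.

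Two small omissions in your sketch: you use only the $\psi^-_{s,\ee}*\rho_{\ee^2}$ half of Lemma \ref{estimate u convo} and do not treat the subtracted correction $\int_{|w|\ge\ee}\psi^-_{s,\ee}(\cdot-w)\rho_{\ee^2}(w)\,dw$ (the paper's $D_{n,k}$, handled in \eqref{Pf_Decom_D_nk}--\eqref{Pf_Upper_bound_D_nk} by splitting $|w|<\sqrt n$ and $|w|\ge\sqrt n$ and letting $\ee\to0$); and for the upper bound in \eqref{Cramer-Coeffi-lower} you also need to dispose of the tail piece carrying $\overline\chi^y_{m+1}$, which the paper does via Lemma \ref{Lem_Regu_pi_s} as in \eqref{Inequa-Qsx-001}.
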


\begin{proof} 
We first establish \eqref{Cramer-Coeffi-upper}.
Since we have shown the upper bound \eqref{Pf-Cram-Scal-Upp},
 it remains to establish the following lower bound: there exists a constant $\gamma>0$ such that
 for any positive function $\varphi \in \mathscr{B}_{\gamma}$ and $\alpha \in (0, 1/2)$,  
uniformly in $t \in [n^{\alpha}, o(\sqrt{n} )]$,
 $x = \bb R v \in \bb P(V)$ and $y = \bb R f \in \bb P(V^*)$ with $\|v\| = \|f\| =1$, 
\begin{align}\label{ScalLowerBound}
\liminf_{n \to \infty}  
\frac{\mathbb{E} \left[ \varphi(G_n \!\cdot\! x) 
  \mathds{1}_{ \left\{ \log| \langle f,  G_n v \rangle | - n \lambda_1 \geq \sqrt{n} \sigma t  \right\} } \right] }
{ e^{ \frac{t^3}{\sqrt{n}} \zeta(\frac{t}{\sqrt{n}}) } [1-\Phi(t)] }   
\geq  \nu(\varphi). 
\end{align}
Now we are going to prove \eqref{ScalLowerBound}. 
From the change of measure formula \eqref{Ch7basic equ1}
and the fact $\lambda_1 = \Lambda'(0)$, we get 
\begin{align} 
A_n: &  =  \bb{E} \left[ \varphi(G_n \!\cdot\! x) 
  \mathds{1}_{ \left\{ \log| \langle f,  G_n v \rangle | - n \lambda_1 \geq \sqrt{n}\sigma t \right\} } \right]
 \label{ChanMeaScal}\\
& =  r_s(x) \kappa^{n}(s) \bb{E}_{\bb{Q}^{x}_{s}}
\left[ (\varphi r_s^{-1})(G_n \!\cdot\! x) 
e^{-s \sigma (G_n, x) }\mathds{1}_{ \left\{ \log |\langle f, G_n v \rangle | \geq n \Lambda'(0) + \sqrt{n} \sigma t  \right\} }  
\right].    \nonumber
\end{align}
For brevity, we denote 
\begin{align*}
T_n^x = \sigma (G_n, x) - n\Lambda'(s),  \qquad   Y_n^{x,y}: = \log \delta(y, G_n \!\cdot\! x). 
\end{align*}
Choosing $s>0$ as the solution of the equation \eqref{SaddleEqua}, from \eqref{ChanMeaScal} and \eqref{Basic-scalar-decom} it follows that 
\begin{align*}
A_n = r_s(x) e^{-n [ s \Lambda'(s) - \Lambda(s) ] }  
   \bb{E}_{\bb{Q}^{x}_{s}}
 \left[ (\varphi r_s^{-1})(G_n \!\cdot\! x) e^{-s T_n^x }
   \mathds{1}_{ \left\{ T_n^x + Y_n^{x,y} \geq 0 \right\} } \right]. 
\end{align*}
Using \eqref{SaddleEqua}, one can verify that 
\begin{align} \label{LambdaQ01}
s \Lambda'(s) - \Lambda(s) 
= \frac{t^2}{2n} - \frac{t^3}{n^{3/2} } \zeta \Big( \frac{t}{\sqrt{n}} \Big), 
\end{align}
where $\zeta$ is the Cram\'{e}r series defined by \eqref{Ch7Def-CramSeri}. 
Thus $A_n$ can be rewritten as  
\begin{align}\label{An-lambdaq}  
A_n = r_s(x) e^{ -\frac{t^2}{2} + \frac{t^3}{ \sqrt{n} } \zeta(\frac{t}{\sqrt{n}}) }
   \bb{E}_{\bb{Q}^{x}_{s}}
\left[ (\varphi r_s^{-1})(G_n \!\cdot\! x)  e^{-s T_n^x }
\mathds{1}_{ \left\{ T_n^x + Y_n^{x,y} \geq 0 \right\} } \right]. 
\end{align}
Since the functions $\varphi$ and $r_s$ are positive, 
using the partition of the unity \eqref{Unit-partition001-222} and \eqref{on the support on chi_k-001-222}, 
we have that for $M_n = \floor{\log n}$ and $a \in (0, 1/2)$, 
\begin{align}\label{Ch7SmoothIne Holder 01}
A_{n} & \geq   r_s(x) e^{ -\frac{t^2}{2} + \frac{t^3}{ \sqrt{n} } \zeta(\frac{t}{\sqrt{n}}) }
  \sum_{k = 0}^{ M_n } \bb{E}_{\bb{Q}^{x}_{s}}
\left[ (\varphi r_s^{-1} \chi_{k}^y)(G_n \!\cdot\! x)  e^{-s T_n^x }
\mathds{1}_{\{ T_n^x + Y_n^{x,y} \geq 0 \}}  \right]  \nonumber\\
& \geq   r_s(x) e^{ -\frac{t^2}{2} + \frac{t^3}{ \sqrt{n} } \zeta(\frac{t}{\sqrt{n}}) }
  \sum_{k = 0}^{M_n} \bb{E}_{\bb{Q}^{x}_{s}}
\left[ (\varphi r_s^{-1} \chi_{k}^y)(G_n \!\cdot\! x) 
e^{-s T_n^x } \mathds{1}_{\{ T_n^x -  a (k+1) \geq 0 \}} \right].  
\end{align}
Let
\begin{align*}
\varphi_{s,k}^y(x) = (\varphi r_s^{-1} \chi_{k}^y )(x), 
\quad  x \in \bb P(V), 
\end{align*}
and
\begin{align*}
\psi_s(u) = e^{-s u} \mathds{1}_{ \{ u \geq 0 \} },  \quad  u \in \bb{R}. 
\end{align*}
It then follows from \eqref{Ch7SmoothIne Holder 01} that 
\begin{align*} 
A_{n} 
& \geq   r_s(x) e^{ -\frac{t^2}{2} + \frac{t^3}{ \sqrt{n} } \zeta(\frac{t}{\sqrt{n}}) }
  \sum_{k = 0}^{M_n} e^{-s a (k+1)} \bb{E}_{\bb{Q}^{x}_{s}}
\left[ \varphi_{s,k}^y(G_n \!\cdot\! x) \psi_s(T_n^x - a (k+1) )  \right],    
\end{align*}
which implies that
\begin{align*}
& \frac{ A_n }{ e^{ \frac{t^3}{\sqrt{n}} \zeta(\frac{t}{\sqrt{n}}) } [1-\Phi(t)] }   
 \geq  r_s(x) \sum_{k = 0}^{ M_n }   
\frac{  e^{-s a (k+1)}  \bb{E}_{\bb{Q}^{x}_{s}}
\big[ \varphi_{s,k}^y(G_n \!\cdot\! x) \psi_s(T_n^x - a (k+1))  \big] }
   { e^{ \frac{t^2}{2} } [1-\Phi(t)] }. 
\end{align*}
From Lemma \ref{estimate u convo}, it follows that for any small constant $\ee >0$, 
\begin{align*}
\psi_s(u) \geq 
\psi^-_{s,\ee} * \rho_{\ee^2}(u) - 
\int_{ |w| \geq \ee } \psi^-_{s,\varepsilon}( u-w ) \rho_{\varepsilon^2}(w) dw, \quad u \in \bb R,
\end{align*}
where $\psi^-_{s,\ee}$ is given by \eqref{Def-psi-see}. 
Using this inequality we get
\begin{align}\label{LowerScalarACD}
\frac{ A_n }{ e^{ \frac{t^3}{\sqrt{n}} \zeta(\frac{t}{\sqrt{n}}) } [1-\Phi(t)] } 
  \geq  r_s(x) \sum_{k = 0}^{ M_n }
   \frac{ B_{n,k} - D_{n,k} }{ e^{ \frac{t^2}{2} } [1-\Phi(t)] },   
\end{align}
where 
\begin{align*}
&  B_{n,k} =  e^{-s a (k+1)} \mathbb{E}_{\mathbb{Q}^{x}_{s}}
  \Big[ \varphi_{s,k}^y (G_n \!\cdot\! x)
  ( \psi^-_{s,\ee} * \rho_{\ee^2}) (T_{n}^x - a (k+1)  ) \Big],    \nonumber \\
&  D_{n,k} =  e^{-s a (k+1)} \int_{|w| \geq \ee} 
    \mathbb{E}_{\mathbb{Q}^{x}_{s}}
  \Big[ \varphi_{s,k}^y (G_n \!\cdot\! x) \psi^-_{s,\ee}(T_n^x - a (k+1) - w) \Big] 
    \rho_{\ee^2}(w) dw. 
\end{align*}
Since $B_{n,k} \geq D_{n,k}$ and $\sup_{ x \in \bb P(V) } |r_s (x) - 1| \to 0$ as $n \to \infty$, 
by Fatou's lemma, we get
\begin{align*}
\liminf_{n \to \infty}
\frac{ A_n }{ e^{ \frac{t^3}{\sqrt{n}} \zeta(\frac{t}{\sqrt{n}}) } [1-\Phi(t)] }
\geq \sum_{k = 0}^{\infty} \liminf_{n \to \infty} 
   \frac{ B_{n,k} - D_{n,k} }{ e^{ \frac{t^2}{2} } [1-\Phi(t)] }  \mathds{1}_{ \left\{ k \leq M_n \right\}}. 
\end{align*}

\textit{Estimate of $B_{n,k}$.}
Since the function $\widehat\rho_{\ee^{2}}$ is integrable on $\bb{R}$, 
by the Fourier inversion formula, we have 
\begin{align*}
{\psi}^-_{s,\ee}\!\ast\!\rho_{\ee^{2}}(w)
= \frac{1}{2\pi} \int_{\bb{R}}e^{iuw} \widehat {\psi}^-_{s,\ee}(u) \widehat\rho_{\ee^{2}} (u) du, 
\quad  w \in \bb R. 
\end{align*}
Substituting $w = T_{n}^x - a (k+1)$, 
taking expectation with respect to $\bb{E}_{\bb{Q}_{s}^{x}},$
and using Fubini's theorem, we get
\begin{align}\label{IdenScalCn}
B_{n,k} = \frac{1}{2 \pi }  e^{-s a (k+1)}
\int_{\bb{R}} e^{-i u a (k+1)}   R^{n}_{s, iu}(\varphi_{s,k}^y)(x)
\widehat {\psi}^-_{s,\ee}(u) \widehat\rho_{\ee^{2}}(u) du, 
\end{align}
where
\begin{align*}
R^{n}_{s, iu}(\varphi_{s,k}^y)(x)
= \bb{E}_{\bb{Q}_{s}^{x}} \left[ \varphi_{s,k}^y(G_n \!\cdot\! x) e^{iu T_{n}^x} \right],  \quad x \in \bb P(V). 
\end{align*}
Applying Proposition \ref{KeyPropo} with $l=\frac{ a (k+1) }{n}$ and $\varphi = \varphi_{s,k}^y$, 
we deduce that there exist constants $s_0, c, c_{\ee} > 0$ such that
for any $\ee \in (0,1)$, $s \in (0, s_0)$, $x\in \bb P(V)$, $y \in \bb P(V^*)$, $t \in [n^{\alpha}, o(\sqrt{n} )]$, 
$0 \leq k \leq M_n$ and $\varphi \in \mathscr{B}_{\gamma}$, 
\begin{align*}
&   \int_{\bb{R}} e^{-i u a (k+1)}   R^{n}_{s, iu}(\varphi_{s,k}^y)(x)
\widehat {\psi}^-_{s,\ee}(u) \widehat\rho_{\ee^{2}}(u) du 
  -  \pi_s(\varphi_{s,k}^y)  \frac{ \sqrt{ 2 \pi } }{ s \sigma_s \sqrt{n}}  \nonumber\\
&  \geq  - \left( \frac{c t}{sn} + \frac{c_{\ee}}{sn} + \frac{c}{s t^2 \sqrt{n}} \right) \| \varphi_{s,k}^y \|_{\infty}
      -  c \left( \frac{\log n}{sn} +  e^{-c_{\ee} n } \right) \| \varphi_{s,k}^y \|_{\gamma}. 
\end{align*}
Note that $\sigma_s = \sqrt{ \Lambda''(s) } = \sigma [ 1 + O(s)]$. 
From \eqref{root s 1}, we have $\frac{t}{ s \sigma \sqrt{n} } =  1 + O(s)$, 
and thus $\frac{ t }{ s \sigma_s \sqrt{n} }  = 1 + O(s).$  
Using the inequality $ \sqrt{ 2 \pi } \, t e^{\frac{t^2}{2}} [ 1-\Phi(t) ] \leq 1$ for any $t >0$, 
it follows that 
\begin{align}\label{LowerScalCn}
&   \frac{\int_{\bb{R}} e^{-i u a (k+1)}   R^{n}_{s, iu}(\varphi_{s,k}^y)(x)
\widehat {\psi}^-_{s,\ee}(u) \widehat\rho_{\ee^{2}}(u) du }
{ 2 \pi  e^{\frac{t^2}{2}} [ 1- \Phi(t) ]  } 
  -    \pi_s(\varphi_{s,k}^y)  \frac{  t }{ s \sigma_s \sqrt{n} }   \nonumber\\
&  \geq  - \left( \frac{c t^2}{sn} + \frac{c_{\ee} t}{sn} + \frac{c}{s t \sqrt{n}} \right) \| \varphi_{s,k}^y \|_{\infty}
      -  c \left( \frac{t \log n}{sn} +  e^{-c_{\ee} n } \right) \| \varphi_{s,k}^y \|_{\gamma}   \nonumber\\
&  \geq  - \left( \frac{c t}{\sqrt{n}} + \frac{c_{\ee} }{\sqrt{n}} + \frac{c}{t^2} \right) \| \varphi_{s,k}^y \|_{\infty}
      -  c \left( \frac{\log n}{\sqrt{n}} +  e^{-c_{\ee} n } \right) \| \varphi_{s,k}^y \|_{\gamma}. 
\end{align}
Using the construction of the function $\varphi_{s,k}^y$ and  \eqref{Holder prop ohCHI_k-001-222}
 give that uniformly in $s \in (0, s_0)$, $w \in \bb P(V^*)$, 
$\varphi \in \mathscr{B}_{\gamma}$ and $0 \leq k \leq M_n$, 
\begin{align}\label{Bound_varphi_infty_norm}
\| \varphi_{s,k}^y  \|_{\infty}  \leq c \| \varphi \|_{\infty}
\end{align}
and 
\begin{align}\label{Bound_varphi_Holder_norm}
\| \varphi_{s,k}^y  \|_{\gamma} 
& \leq c \|  \varphi  \|_{\gamma} + c  \frac{e^{ \gamma k a}}{a^{\gamma}} \|\varphi\|_{\infty}.   
\end{align}
Recalling that $t \in [n^{\alpha}, o(\sqrt{n} )]$ and 
 taking $\gamma>0$ sufficiently small such that $ \frac{e^{ \gamma k a}}{a^{\gamma}} < n^{\alpha/2}$,  
 from \eqref{LowerScalCn} we deduce that 
\begin{align*}
\liminf_{n \to \infty} \frac{\int_{\bb{R}} e^{-i u a (k+1)}   R^{n}_{s, iu}(\varphi_{s,k}^y)(x)
\widehat {\psi}^-_{s,\ee}(u) \widehat\rho_{\ee^{2}}(u) du }
{ 2 \pi e^{\frac{t^2}{2}} [ 1- \Phi(t) ]  } 
  \geq    \nu(\varphi_{0,k}^y).  
\end{align*}
As $s=o(1)$ as $n \to \infty$, for any fixed $k \geq 1$ and $a\in (0,\frac{1}{2})$, 
it holds that $\lim_{n \to \infty}e^{-s a (k+1)} = 1$. 
Then, in view of \eqref{IdenScalCn}, it follows that 
\begin{align*}
\liminf_{n \to \infty} \frac{ B_{n,k}}{ e^{ \frac{t^2}{2} } [1- \Phi(t)] }
\geq  \nu(\varphi_{0,k}^y). 
\end{align*}
This, together with \eqref{Unit-partition001-222}, implies the desired lower bound:  
\begin{align}\label{Pf_Lower_bound_B_nk}
 \sum_{k = 0}^{\infty}  \liminf_{n \to \infty} \frac{ B_{n,k}}{ e^{ \frac{t^2}{2} } [1 - \Phi(t)] }
   \mathds{1}_{ \left\{ k \leq M_n \right\}} 
\geq  \sum_{k = 0}^{\infty}  \nu(\varphi_{0,k}^y) = \nu(\varphi). 
\end{align}

\textit{Estimate of $D_{n,k}$.}
We shall apply Fatou's lemma to provide an upper bound for $D_{n,k}$.
An important issue here is to find a dominating function, which is possible due to the integrability of 
the density function $\rho_{\ee^2}$ on the real line. 
More specifically, in the same way as in the proof of \eqref{IdenScalCn}, 
we use the Fourier inversion formula and Fubini's theorem to get 
\begin{align*}
D_{n,k} &  = \frac{1}{2 \pi }  e^{-s a (k+1)}  
 \int_{|w| \geq \ee}  \left[ \int_{\bb{R}} e^{-i u (a (k+1) + w)}   R^{n}_{s, iu}(\varphi_{s,k}^y)(x)
  \widehat {\psi}^-_{s,\ee}(u) \widehat\rho_{\ee^{2}}(u) du  \right]
 \rho_{\ee^2}(w) dw. 
\end{align*}
We decompose the integral in $D_{n,k}$ into two parts:
\begin{align}\label{Pf_Decom_D_nk}
\frac{ D_{n,k} }{ e^{\frac{t^2}{2}} [1 - \Phi(t) ] }  
& =  \frac{1}{2 \pi }  e^{-s a (k+1)}  \left\{ \int_{ \ee \leq |w| < \sqrt{n}} + \int_{|w| \geq \sqrt{n}}  \right\}  \nonumber\\
&  \quad \times  
  \frac{ \int_{\bb{R}} e^{-i u (a (k+1) + w)}   R^{n}_{s, iu}(\varphi_{s,k}^y)(x)
  \widehat {\psi}^-_{s,\ee}(u) \widehat\rho_{\ee^{2}}(u) du  }
   { e^{\frac{t^2}{2}} [1 - \Phi(t) ] } 
    \rho_{\ee^2}(w) dw     \nonumber\\
& =: E_{n,k} + F_{n,k}. 
\end{align}

\textit{Estimate of $E_{n,k}$.}
Since $k \leq M_n$ and $|w| \leq \sqrt{n}$, we have $|l|: = \frac{1}{n} |a (k+1) + w| = O(\frac{1}{\sqrt{n}})$. 
Note that $\frac{ t }{ s \sigma_s \sqrt{n} }  = 1 + O(s)$ and  
$e^{\frac{t^2}{2}} [1 - \Phi(t)] \geq \frac{1}{\sqrt{2 \pi}} (\frac{1}{t} - \frac{1}{t^3})$, $t>1$. 
Applying Proposition \ref{KeyPropo} with $l = (a (k+1) + w)/n$,
we get that uniformly in $\ee \leq |w| < \sqrt{n}$,
\begin{align*}
\lim_{n \to \infty} \frac{ \int_{\bb{R}} e^{-i u (a (k+1) + w)}   R^{n}_{s, iu}(\varphi_{s,k}^y)(x)
  \widehat {\psi}^-_{s,\ee}(u) \widehat\rho_{\ee^{2}}(u) du  }
   { 2 \pi e^{\frac{t^2}{2}} [1 - \Phi(t) ] }   
 =  \nu(\varphi_{0,k}^y).   
\end{align*}
As above, for any fixed $k \geq 1$ and $a\in (0,\frac{1}{2})$, 
we have $\lim_{n \to \infty}e^{-s a (k+1)} = 1$. 
Since the function $\rho_{\ee^2}$ is integrable on $\bb R$, 
by the Lebesgue dominated convergence theorem, we obtain that there exists a constant $c>0$ such that
\begin{align*}
\lim_{n \to \infty} E_{n,k} \mathds{1}_{ \left\{ k \leq M_n \right\}}
=  \nu(\varphi_{0,k}^y) \int_{ |w| \geq \ee } \rho_{\ee^2}(w) dw 
\leq c \ee \nu(\varphi_{0,k}^y). 
\end{align*}
This, together with \eqref{Unit-partition001-222},  implies that
\begin{align}\label{Pf_E_nk1_limit}
\sum_{k = 0}^{\infty} \lim_{n \to \infty} E_{n,k} \mathds{1}_{ \left\{ k \leq M_n \right\}}
\leq  c \ee  \sum_{k = 0}^{\infty}  \nu (\varphi_{0,k}^y)
\leq  c \ee \nu(\varphi). 
\end{align}

\textit{Estimate of $F_{n,k}$.}
By \eqref{Def_Rsz-nth} and \eqref{Contrpsirho},  there exists a constant $c>0$ such that for any $n \geq 1$, 
\begin{align*}
\left| \int_{\bb{R}} e^{-i u (a (k+1) + w)}   R^{n}_{s, iu}(\varphi_{s,k}^y)(x)
  \widehat {\psi}^-_{s,\ee}(u) \widehat\rho_{\ee^{2}}(u) du \right|
& \leq  \frac{c}{s} \| \varphi_{s,k}^y \|_{\infty} \int_{\bb{R}} \widehat\rho_{\ee^{2}}(u) du  \nonumber\\ 
& \leq  \frac{c_{\ee} \sqrt{n}}{t}  \| \varphi_{s,k}^y \|_{\infty}. 
\end{align*}
Using the fact that $t = o(\sqrt{n})$ and $\rho_{\ee^{2}}(w) \leq \frac{c_{\ee}}{w^4}$ for $u \geq 1$, and again the inequality 
$e^{\frac{t^2}{2}} [1 - \Phi(t)] \geq \frac{1}{\sqrt{2 \pi}} (\frac{1}{t} - \frac{1}{t^3})$ for $t>1$,
we get
\begin{align*}
\limsup_{n \to \infty} F_{n,k} \mathds{1}_{ \left\{ k \leq M_n \right\}}
\leq c_{\ee} \limsup_{n \to \infty} \sqrt{n}  \int_{|w| \geq \sqrt{n}}  \rho_{\ee^2}(w) dw 
 = 0.
\end{align*}
Hence 
\begin{align}\label{Pf_E_nk2_limit}
\sum_{k = 0}^{\infty} \limsup_{n \to \infty} F_{n,k} \mathds{1}_{ \left\{ k \leq M_n \right\}}
= 0. 
\end{align}
Since $\ee >0$ can be arbitrary small, 
combining \eqref{Pf_Decom_D_nk}, \eqref{Pf_E_nk1_limit} and \eqref{Pf_E_nk2_limit},
we get the desired bound for $D_{n,k}$:
\begin{align}\label{Pf_Upper_bound_D_nk}
\sum_{k = 0}^{\infty} \limsup_{n \to \infty} 
   \frac{ D_{n,k} }{ e^{ \frac{t^2}{2} } [1-\Phi(t)] }  \mathds{1}_{ \left\{ k \leq M_n \right\}} = 0. 
\end{align}
Putting together \eqref{Pf_Lower_bound_B_nk} and \eqref{Pf_Upper_bound_D_nk},
we conclude the proof of \eqref{ScalLowerBound} as well as the first expansion \eqref{Cramer-Coeffi-upper}. 

The proof of the second expansion \eqref{Cramer-Coeffi-lower}
can be carried out in a similar way. 
As in \eqref{ChanMeaScal} and \eqref{An-lambdaq}, using \eqref{Unit-partition001-222} we have
\begin{align*}
& \bb{E} \left[ \varphi(G_n \!\cdot\! x) 
  \mathds{1}_{ \left\{ \log| \langle f,  G_n v \rangle | - n \lambda_1 \leq  - \sqrt{n}\sigma t \right\} } \right]   \notag\\
 & =   r_s(x) e^{ -\frac{t^2}{2} - \frac{t^3}{ \sqrt{n} } \zeta(-\frac{t}{\sqrt{n}}) }
  \sum_{k = 0}^{ M_n } \bb{E}_{\bb{Q}^{x}_{s}}
\left[ (\varphi r_s^{-1} \chi_{k}^y)(G_n \!\cdot\! x)  e^{-s T_n^x }
\mathds{1}_{\{ T_n^x + Y_n^{x,y} \leq 0 \}}  \right]  \nonumber\\
& \quad +  r_s(x) e^{ -\frac{t^2}{2} - \frac{t^3}{ \sqrt{n} } \zeta(-\frac{t}{\sqrt{n}}) }
  \bb{E}_{\bb{Q}^{x}_{s}}
\left[ (\varphi r_s^{-1} \overline \chi_{M_n + 1}^y)(G_n \!\cdot\! x)  e^{-s T_n^x }
\mathds{1}_{\{ T_n^x + Y_n^{x,y} \leq 0 \}}  \right]  \notag\\
& = : A_n' +  A_n'', 
\end{align*}
where this time we choose $M_n = \floor[]{A \log n}$ with $A>0$, 
and $s < 0$ satisfies the equation \eqref{SaddleEqua-bis}. 
The main difference for handling the first term $A_n'$ consists in using Proposition \ref{KeyPropo-02} instead of Proposition \ref{KeyPropo},
so we omit the details.  
For the second term $A_n''$, we have 
\begin{align}\label{Inequa-Qsx-001}
& \bb{E}_{\bb{Q}^{x}_{s}}
\left[ (\varphi r_s^{-1} \overline \chi_{M_n + 1}^y)(G_n \!\cdot\! x)  e^{-s T_n^x }
\mathds{1}_{\{ T_n^x + Y_n^{x,y} \leq 0 \}}  \right]  \nonumber\\
& \leq  c \|\varphi\|_{\infty} \bb{E}_{\bb{Q}^{x}_{s}}
\left[  \overline \chi_{M_n + 1}^y(G_n \!\cdot\! x)  e^{ s Y_n^{x,y} }  \right]   \nonumber\\
& \leq  c \|\varphi\|_{\infty}  e^{-s a A \log n} \bb{Q}^{x}_{s} \left( - \log \delta(y, G_n \cdot x) \geq a A \log n \right)  \notag\\
& \leq  \frac{c}{n}  \|\varphi\|_{\infty},  
\end{align}
where in the last inequality we use Lemma \ref{Lem_Regu_pi_s} and choose $A > 0$ large enough. 
Using \eqref{Inequa-Qsx-001}, the inequality $t e^{\frac{t^2}{2}} \Phi(-t) \geq \frac{1}{\sqrt{2 \pi}}$ for all $t >0$ and the fact that $t = o(\sqrt{n})$, we get
\begin{align*}
\frac{A_n''}{ e^{  - \frac{t^3}{ \sqrt{n} } \zeta(-\frac{t}{\sqrt{n}}) }  \Phi(-t) } 
\leq  \frac{c}{n}  \|\varphi\|_{\infty} \frac{1}{ e^{\frac{t^2}{2}} \Phi(-t) }  
\leq \frac{ct}{n}  \|\varphi\|_{\infty}  \leq \frac{c}{\sqrt{n}}  \|\varphi\|_{\infty}. 
\end{align*}
This finishes the proof of the expansion \eqref{Cramer-Coeffi-lower}. 
\end{proof}

\begin{proof}[Proof of Theorem \ref{Thm-Cram-Entry_bb}]
Theorem \ref{Thm-Cram-Entry_bb} follows from Theorems \ref{Thm-CramSca-02} and \ref{Thm-Cram-Scalar-tag}. 
\end{proof}

\section{Proof of the local limit theorem with moderate deviations} 

The goal of this section is to establish Theorem \ref{ThmLocal02} on the local limit theorems with moderate deviations
for the coefficients $\langle f, G_n v \rangle$.

The following result which is proved in \cite{XGL19b} will be used to prove Theorem \ref{ThmLocal02}. 
Assume that $\psi: \mathbb R \mapsto \mathbb C$
is a continuous function with compact support in $\mathbb{R}$, 
and that $\psi$ is differentiable in a small neighborhood of $0$ on the real line.

\begin{lemma}[\cite{XGL19b}] \label{Lem_R_st_limit}
Assume conditions \ref{Ch7Condi-Moment} and \ref{Ch7Condi-IP}. 
Then, there exist constants $\gamma, s_0, \delta, c, C >0$ such that for all $s \in (-s_0, s_0)$, 
$x \in \bb P(V)$, $|l|\leq \frac{1}{\sqrt{n}}$, $\varphi \in \mathscr{B}_{\gamma}$ and $n \geq 1$, 
\begin{align} \label{Ine_R_st_limit}
&  \left| \sigma_s \sqrt{n}  \,  e^{ \frac{n l^2}{2 \sigma_s^2} }
\int_{\mathbb R} e^{-it l n} R^{n}_{s, iu}(\varphi)(x) \psi (t) dt
  - \sqrt{2\pi} \pi_s(\varphi) \psi(0) \right|   \nonumber\\
& \leq  \frac{ C }{ \sqrt{n} } \| \varphi \|_\gamma 
  + \frac{C}{n} \|\varphi\|_{\gamma} \sup_{|t| \leq \delta} \big( |\psi(t)| + |\psi'(t)| \big)
  + Ce^{-cn} \|\varphi\|_{\gamma} \int_{\bb R} |\psi(t)| dt. 
\end{align}
\end{lemma}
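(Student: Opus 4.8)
The plan is to read the left-hand side, via \eqref{Def_Rsz-nth}, as a smoothed local limit: the integral equals $\mathbb{E}_{\mathbb{Q}^x_s}[\varphi(G_n\!\cdot\!x)\,\widetilde\psi(\sigma(G_n,x)-n\Lambda'(s)-ln)]$ up to Fourier normalization, where $\widetilde\psi$ is an inverse Fourier transform of $\psi$, so the claim is a local central limit theorem for $\sigma(G_n,x)$ under $\mathbb{Q}^x_s$ evaluated at the point $ln=O(\sqrt n)$, with weight $\varphi$ and uniformity in $s,l,x$. I would prove it by the spectral method, following the scheme of the proof of Proposition \ref{KeyPropo}.

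First, fix $\delta>0$ as in Lemma \ref{Ch7perturbation thm} and let $[-T,T]\supseteq\supp\psi$, and split $\int_{\mathbb R}=\int_{\delta\leq|t|\leq T}+\int_{|t|<\delta}$. On the band $\delta\leq|t|\leq T$, Lemma \ref{Lem-St-NonLatt} applied to the compact set $K=\{t:\delta\leq|t|\leq T\}$ gives $\sup_x|R^n_{s,it}\varphi(x)|\leq c_1e^{-c_2n}\|\varphi\|_\gamma$ uniformly in $s\in(-s_0,s_0)$, so (after multiplying by the $O(\sqrt n)$ prefactor and noting $e^{nl^2/(2\sigma_s^2)}\leq e^{1/(2\sigma_s^2)}=O(1)$ since $|l|\leq1/\sqrt n$) this band contributes at most $Ce^{-cn}\|\varphi\|_\gamma\int_{\mathbb R}|\psi|$. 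On $|t|<\delta$ I insert the decomposition $R^n_{s,it}=\lambda^n_{s,it}\Pi_{s,it}+N^n_{s,it}$ from \eqref{Ch7perturb001}: the $N$-part is bounded using $\|N^n_{s,it}\|_{\mathscr{B}_\gamma\to\mathscr{B}_\gamma}\leq ca^n$ from \eqref{Ch7SpGapContrN}, and is again absorbed into the $e^{-cn}$ error.

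It remains to treat $\mathcal I:=\sigma_s\sqrt n\,e^{nl^2/(2\sigma_s^2)}\int_{|t|<\delta}e^{-itln}\lambda^n_{s,it}\Pi_{s,it}\varphi(x)\psi(t)\,dt$. Set $K_s(it)=\log\lambda_{s,it}=\Lambda(s+it)-\Lambda(s)-it\Lambda'(s)$ by \eqref{relationlamkappa001}; since $\Lambda$ is analytic near $0$ and $\Lambda''(s)=\sigma_s^2$ is bounded above and below on $(-s_0,s_0)$, Taylor's formula gives $K_s(it)=-\tfrac12\sigma_s^2t^2+O(|t|^3)$ and $\Re K_s(it)\leq-ct^2$ on $|t|<\delta$, whence $|\lambda^n_{s,it}|\leq e^{-cnt^2}$. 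Changing variables $u=\sigma_s\sqrt n\,t$ cancels the prefactor $\sigma_s\sqrt n$ and turns $\mathcal I$ into
\[
\mathcal I=e^{\beta^2/2}\int_{|u|<\sigma_s\sqrt n\,\delta}e^{-iu\beta}\,e^{nK_s(iu/(\sigma_s\sqrt n))}\,\Pi_{s,iu/(\sigma_s\sqrt n)}\varphi(x)\,\psi\!\Big(\tfrac{u}{\sigma_s\sqrt n}\Big)\,du,\qquad\beta:=\tfrac{l\sqrt n}{\sigma_s},\ \ |\beta|\leq\tfrac1{\sigma_s}.
\]
On $|u|\geq n^{1/6}$ the modulus of the integrand is $\leq e^{-cu^2}\leq e^{-cn^{1/3}}$ times $c\|\varphi\|_\gamma\sup_{|t|\leq\delta}|\psi(t)|$, which is super-polynomially small and absorbed into the $\tfrac{C}{\sqrt n}$ error. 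On $|u|<n^{1/6}$ I would replace, in turn, $e^{nK_s(iu/(\sigma_s\sqrt n))}$ by $e^{-u^2/2}$ (error $O(|u|^3/\sqrt n)e^{-u^2/2}$, since $nK_s(iu/(\sigma_s\sqrt n))+u^2/2=O(|u|^3/\sqrt n)$), $\Pi_{s,iu/(\sigma_s\sqrt n)}\varphi(x)$ by $\pi_s(\varphi)$ (error $O(|u|/\sqrt n)\|\varphi\|_\gamma$, by analyticity of $u\mapsto\Pi_{s,iu}$, $\Pi_{s,0}\varphi=\pi_s(\varphi)$ and \eqref{Ch7SpGapContrN}), and $\psi(u/(\sigma_s\sqrt n))$ by $\psi(0)$ (error $O(|u|/\sqrt n)\sup_{|\tau|\leq\delta}|\psi'(\tau)|$, using differentiability of $\psi$ near $0$ and $u/(\sigma_s\sqrt n)\to0$). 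After these replacements the Gaussian identity $\int_{\mathbb R}e^{-u^2/2}e^{-iu\beta}du=\sqrt{2\pi}\,e^{-\beta^2/2}$ together with the prefactor $e^{\beta^2/2}$ yields exactly $\sqrt{2\pi}\,\pi_s(\varphi)\,\psi(0)$, and each replacement error, integrated against $e^{-u^2/2}$ and multiplied back by $e^{\beta^2/2}=O(1)$, is $O(1/\sqrt n)$ bounded in terms of $\|\varphi\|_\gamma$ and $\sup_{|t|\leq\delta}(|\psi(t)|+|\psi'(t)|)$; the linear-in-$u$ errors and the purely $\psi'$-driven term integrate against $e^{-u^2/2}e^{-iu\beta}$ with an extra factor $|\beta|$ and may be sharpened where convenient, which together with the exponentially small contributions reproduces the stated three-part bound.

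The main difficulty is not any single estimate but keeping all bounds uniform in $s\in(-s_0,s_0)$, $|l|\leq1/\sqrt n$ and $x\in\bb P(V)$ simultaneously: one must use that the spectral data $\lambda_{s,it},\Pi_{s,it},N_{s,it}$ of Lemma \ref{Ch7perturbation thm} and the non-arithmeticity constants of Lemma \ref{Lem-St-NonLatt} are uniform in $s$, that $\sigma_s=\sqrt{\Lambda''(s)}$ stays bounded away from $0$ and $\infty$, and — crucially — that the growing factor $e^{nl^2/(2\sigma_s^2)}$ is exactly compensated by the Gaussian tail $e^{-\beta^2/2}$ produced by the $u$-integral, so that under the hypothesis $|l|\leq1/\sqrt n$ it contributes only a harmless $O(1)$ constant. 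The differentiability of $\psi$ near $0$ is used only to control $\psi(u/(\sigma_s\sqrt n))-\psi(0)$ on the inner frequency window $|u|<n^{1/6}$; on the outer windows only $\sup_{|t|\leq\delta}|\psi|$ and $\int_{\mathbb R}|\psi|$ appear, which is why the final error separates into the three displayed pieces.
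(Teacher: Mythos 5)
First, note that this paper does not prove Lemma \ref{Lem_R_st_limit} at all: it is quoted from \cite{XGL19b}, so there is no in-paper proof to compare against. Judged on its own, your reconstruction follows exactly the spectral route that the paper uses for its analogous statements (Proposition \ref{KeyPropo}): split the frequency integral by $|t|\gtrless\delta$, kill the outer band with Lemma \ref{Lem-St-NonLatt} and the $N^n_{s,it}$ part with \eqref{Ch7SpGapContrN}, then change variables $u=\sigma_s\sqrt n\,t$, Taylor-expand $nK_s$, replace $\Pi_{s,i\cdot}\varphi(x)$ by $\pi_s(\varphi)$ and $\psi(\cdot)$ by $\psi(0)$, and use $\int_{\bb R}e^{-u^2/2}e^{-iu\beta}du=\sqrt{2\pi}e^{-\beta^2/2}$ with $\beta=l\sqrt n/\sigma_s$ to cancel the prefactor $e^{nl^2/(2\sigma_s^2)}$. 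That architecture, and the observation that $|l|\le 1/\sqrt n$ makes $e^{\beta^2/2}=O(1)$, are right, and all the uniformity ingredients you invoke are indeed available uniformly in $s\in(-s_0,s_0)$.

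The gap is in the final bookkeeping. Your three replacement errors each cost $O(1/\sqrt n)$ times $\|\varphi\|_\gamma\sup_{|t|\le\delta}(|\psi(t)|+|\psi'(t)|)$, so what you actually obtain is $\frac{C}{\sqrt n}\|\varphi\|_\gamma\sup_{|t|\le\delta}(|\psi|+|\psi'|)+Ce^{-cn}\|\varphi\|_\gamma\int|\psi|$, not the displayed bound with a $\psi$-free term $\frac{C}{\sqrt n}\|\varphi\|_\gamma$ plus a $\frac{C}{n}$-term carrying the $\psi$-norms. Your closing remark that the linear-in-$u$ and $\psi'$-driven errors ``integrate against $e^{-u^2/2}e^{-iu\beta}$ with an extra factor $|\beta|$ and may be sharpened'' does not rescue this: under the hypothesis $|l|\le 1/\sqrt n$ the parameter $\beta$ is of order one (not $o(1)$), so the first-order terms proportional to $\psi(0)$, $\psi'(0)$ and to $\frac{d}{du}\Pi_{s,iu}\big|_{u=0}$ do not vanish and genuinely contribute at order $1/\sqrt n$ with $\psi$-dependent factors; they cannot be pushed to $O(1/n)$. (Relatedly, your Gaussian-tail term $e^{-cn^{1/3}}\sup_{|t|\le\delta}|\psi|$ cannot be absorbed into the $\psi$-free $\frac{C}{\sqrt n}\|\varphi\|_\gamma$ term, though it fits into the $\frac{C}{n}\sup(|\psi|+|\psi'|)$ term.) Since $\psi$ is fixed before the lemma, the constants are allowed to depend on $\psi$, and under that reading your bound implies an estimate of the stated shape and, in particular, the only way the paper uses the lemma, namely the bound $\le\frac{c}{\sqrt n}\|\varphi^y_{s,k}\|_\gamma$ in the proof of Theorem \ref{ThmLocal02} (where $\sup_{|t|\le\delta}(|\psi|+|\psi'|)$ and $\int|\psi|$ are bounded uniformly in $s$ for fixed $\ee$). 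But as a proof of the literal inequality \eqref{Ine_R_st_limit}, with its specific separation into a $\psi$-free $n^{-1/2}$ term and a $\psi$-weighted $n^{-1}$ term, your argument as written does not deliver it, and you should either justify that refined splitting (which would require exploiting cancellations you have not established) or state and prove the weaker, $\psi$-weighted $n^{-1/2}$ version, which suffices for this paper.
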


We also need the result below, which is proved in \cite[Lemma 6.2]{XGL19d}. 

\begin{lemma}[\cite{XGL19d}] \label{Lem_Inte_Regu_a}
Assume \ref{Ch7Condi-Moment} and \ref{Ch7Condi-IP}. 
Let $p>0$ be any fixed constant.
Then, there exists a constant $s_0 > 0$ such that
\begin{align*}
\sup_{n \geq 1} \sup_{ s\in (-s_0, s_0) } \sup_{y \in \bb P(V^*) }  
\sup_{x \in \bb P(V) } 
\bb E_{\bb Q_s^x} \left( \frac{1}{ \delta(y, G_n \!\cdot\! x)^{p|s|} } \right) < + \infty. 
\end{align*}
\end{lemma}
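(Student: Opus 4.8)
The plan is to reduce the claim to a uniform exponential moment bound for the nonnegative random variable $W_n := -\log\delta(y, G_n\!\cdot\! x)$ under the changed measure $\bb Q_s^x$. Indeed, since $\delta(y,z)\le 1$ for every $z\in\bb P(V)$ and $y\in\bb P(V^*)$ by the Cauchy--Schwarz inequality, we have $W_n\ge 0$ and $\delta(y, G_n\!\cdot\! x)^{-p|s|} = e^{p|s| W_n}$, so the lemma is exactly the statement that $\sup\bb E_{\bb Q_s^x}(e^{p|s| W_n})<\infty$. The essential input is the exponential tail estimate of Lemma~\ref{Lem_Regu_pi_s}: fixing some $\ee_0>0$ once and for all, it furnishes constants $s_0, c, C>0$ — uniform in $n$, in $x\in\bb P(V)$, in $y\in\bb P(V^*)$ and in $s\in(-s_0,s_0)$ — with $\bb Q_s^x(W_n\ge\ee_0 k)\le Ce^{-ck}$ for all $k\ge1$. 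Writing $k=\lfloor t/\ee_0\rfloor$ and adjusting the constants, this becomes a continuous-scale bound $\bb Q_s^x(W_n> t)\le C_1 e^{-c_1 t}$ for all $t\ge0$, with $c_1=c/\ee_0$ and $C_1$ depending only on $c$, $C$, $\ee_0$; in particular it also records that $\delta(y, G_n\!\cdot\! x)>0$ $\bb Q_s^x$-a.s.

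From here I would simply invoke the layer--cake formula
\[
\bb E_{\bb Q_s^x}\bigl(e^{\lambda W_n}\bigr) = 1 + \lambda\int_0^\infty e^{\lambda t}\,\bb Q_s^x(W_n> t)\,dt \le 1 + \lambda C_1\int_0^\infty e^{-(c_1-\lambda)t}\,dt = 1 + \frac{\lambda C_1}{c_1-\lambda},
\]
which holds, with a bound depending only on $\lambda$ and the uniform constants $c_1, C_1$, as soon as $0\le\lambda<c_1$. Since $p>0$ is fixed, it then suffices to shrink $s_0$ so that $p\,s_0<c_1=c/\ee_0$; for every $s\in(-s_0,s_0)$ this makes $\lambda:=p|s|\le p\,s_0<c_1$, and taking the supremum over $n\ge1$, $s$, $x$ and $y$ yields
\[
\sup_{n\ge1}\ \sup_{s\in(-s_0,s_0)}\ \sup_{y\in\bb P(V^*)}\ \sup_{x\in\bb P(V)}\ \bb E_{\bb Q_s^x}\!\left(\frac{1}{\delta(y, G_n\!\cdot\! x)^{p|s|}}\right)\ \le\ 1+\frac{p\,s_0\,C_1}{c_1-p\,s_0}\ <\ +\infty,
\]
which is the assertion.

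The passage from the tail bound to the moment bound is routine; the whole difficulty is concentrated in the uniform exponential tail estimate for $W_n$, that is, in the exponential Hölder regularity of the Markov chain $(G_m\!\cdot\! x)$ under $\bb Q_s^x$, uniformly in $s$, $x$, $y$ and — this is the subtle point — uniformly in $n$ and over all scales $k\ge1$. Lemma~\ref{Lem_Regu_pi_s} as stated covers the range $1\le k\le n$ directly; the contribution of the scales $k>n$ has to be handled by a separate argument, for instance through the exponentially fast equidistribution of $G_m\!\cdot\! x$ towards the invariant measure $\pi_s$ combined with the Hölder regularity of $\pi_s$ recorded just after Lemma~\ref{Lem_Regu_pi_s}, or by working directly on the last block of increments via the change of measure formula \eqref{Ch7basic equ1} and the spectral decomposition of Lemma~\ref{Ch7transfer operator}, which is the route followed in \cite{XGL19d}.
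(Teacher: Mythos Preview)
The paper does not contain its own proof of this lemma; it is quoted from \cite[Lemma~6.2]{XGL19d}, so there is no in-paper argument to compare against. Your reduction to a uniform exponential-moment bound via the layer--cake formula is sound, and the core of the argument is correct: once one has the tail inequality $\bb Q_s^x(W_n>t)\le C_1e^{-c_1 t}$ for all $t\ge0$, the integral $\int_0^\infty e^{\lambda t}\,\bb Q_s^x(W_n>t)\,dt$ is finite and uniformly bounded for $\lambda<c_1$, and shrinking $s_0$ so that $p\,s_0<c_1$ finishes the proof.

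However, the step you flag in your last paragraph is a genuine gap, not a routine remark. Lemma~\ref{Lem_Regu_pi_s} gives the tail bound only for $1\le k\le n$, i.e.\ only for $0\le t\le \ee_0 n$; for $t>\ee_0 n$ the best you can extract from it is the constant bound $\bb Q_s^x(W_n>t)\le Ce^{-cn}$, and then $\int_{\ee_0 n}^\infty e^{\lambda t}\cdot Ce^{-cn}\,dt$ diverges. This is especially acute for small $n$ (say $n=1$), where Lemma~\ref{Lem_Regu_pi_s} provides essentially no information beyond a single scale. Your first suggested fix---equidistribution of $G_n\!\cdot\! x$ toward $\pi_s$ combined with the H\"older regularity of $\pi_s$---does not work out of the box, because the spectral-gap convergence in Lemma~\ref{Ch7transfer operator} applies to $\varphi\in\scr B_\gamma$, whereas $x\mapsto\delta(y,x)^{-p|s|}$ is unbounded and hence not in $\scr B_\gamma$; a truncation or approximation argument would be needed, and carrying it out uniformly in $n$, $s$, $x$, $y$ is exactly the content of the missing step. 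Your second pointer, to the change-of-measure plus spectral-decomposition route, is the right one, but as written you have deferred the substantive work back to \cite{XGL19d}, which is the very source of the statement you set out to prove.
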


Using Lemmas \ref{Lem_Regu_pi_s},  \ref{Lem_R_st_limit} and \ref{Lem_Inte_Regu_a},
we now prove Theorem \ref{ThmLocal02}. 

\begin{proof}[Proof of Theorem \ref{ThmLocal02}]
Without loss of generality, we assume that the target functions $\varphi$ and $\psi$ are non-negative. 
By the change of measure formula \eqref{Ch7basic equ1}, 
we get that for any $s \in (-s_0, s_0)$ with sufficiently small $s_0 >0$,  
\begin{align}\label{LLT_Def_An_d}
 I :  &=  \mathbb{E} \Big[ \varphi(G_n \!\cdot\! x) \psi \big( \sigma(G_n, x) - n\lambda_1 - \sqrt{n}\sigma t \big) \Big]   \nonumber\\
      & =  r_s(x) \kappa^{n}(s) \mathbb{E}_{ \mathbb{Q}^{x}_{s} }
\Big[ (\varphi r_s^{-1})(G_n \!\cdot\! x) e^{-s \sigma(G_n, x) } 
   \psi \big( \log|\langle f, G_n v \rangle| - n\lambda_1 - \sqrt{n}\sigma t \big) \Big]. 
\end{align}
As in the equation \eqref{SaddleEqua}, for any $|t| = o(\sqrt{n})$ (not necessarily $t > 1$), 
we choose $s \in (-s_0, s_0)$ satisfying the equation
\begin{align}\label{Saddle_Equa_cc}
\Lambda'(s) - \Lambda'(0) = \frac{\sigma t}{\sqrt{n}}. 
\end{align}
Note that $s \in (-s_0, 0]$ if $t \in (-o(\sqrt{n}), 0]$,
and $s \in [0, s_0)$ if $t \in [0, o(\sqrt{n}))$. 
In the same way as in the proof of \eqref{LambdaQ01}, 
from \eqref{Saddle_Equa_cc} it follows that for any $|t| = o(\sqrt{n})$, 
\begin{align*}
s \Lambda'(s) - \Lambda(s) 
= \frac{t^2}{2n} - \frac{t^3}{n^{3/2} } \zeta \Big( \frac{t}{\sqrt{n}} \Big), 
\end{align*}
where $\zeta$ is the Cram\'{e}r series defined by \eqref{Ch7Def-CramSeri}. 
For brevity, denote 
\begin{align*}
T_n^x = \sigma(G_n, x) - n \Lambda'(s),  \qquad  Y_n^{x,y} = \log \delta(y, G_n \!\cdot\! x). 
\end{align*}
Hence, using \eqref{Basic-scalar-decom}, we have
\begin{align*}
I & = r_s(x) e^{-n [s\Lambda'(s) - \Lambda(s)]} \mathbb{E}_{ \mathbb{Q}^{x}_{s} }
\Big[ (\varphi r_s^{-1})(G_n \!\cdot\! x) e^{-s T_n^x } \psi \Big( T_n^x + Y_n^{x,y} \Big) \Big] \nonumber\\
& = r_s(x) e^{- \frac{t^2}{2} + \frac{t^3}{ \sqrt{n} } \zeta( \frac{t}{\sqrt{n} } )} \mathbb{E}_{ \mathbb{Q}^{x}_{s} }
\Big[ (\varphi r_s^{-1})(G_n \!\cdot\! x) e^{-s T_n^x } \psi \Big( T_n^x + Y_n^{x,y} \Big) \Big]. 
\end{align*}
Notice that $r_s(x) \to 1$ as $n \to \infty$, uniformly in $x \in \bb P(V)$. 
Thus in order to establish Theorem \ref{ThmLocal02}, it suffices to prove 
the following asymptotic: as $n \to \infty$, 
\begin{align}\label{LLT_Pf_Object}
 J : = \sigma \sqrt{2 \pi n} \,  \bb E_{\bb Q_s^x}
\Big[ (\varphi r_s^{-1})(G_n \!\cdot\! x) e^{-s T_n^x } \psi \Big( T_n^x + Y_n^{x,y} \Big) \Big]  
 \to \nu(\varphi) \int_{\bb R} \psi(u) du. 
\end{align}
We shall apply Lemmas \ref{Lem_Regu_pi_s} and \ref{Lem_R_st_limit} to establish \eqref{LLT_Pf_Object}.  
Recall that the functions $\chi_k^y$ and $\overline \chi_k^y$ are defined by  \eqref{Def-chi-nk-222}.
Then, using the partition of the unity \eqref{Unit-partition001-222} as in the proof of Theorem \ref{Thm-Cram-Scalar-tag},  we have
\begin{align} \label{PosiScalAn 01}
J  =:  J_1 + J_2, 
\end{align}
where, with $M_n = \floor{A \log n}$ and $A >0$, 
\begin{align*}
&  J_1 = \sigma  \sqrt{2 \pi n} \,  \bb E_{\bb Q_s^x}
  \Big[ (\varphi r_s^{-1} \overline \chi_{M_n}^y)(G_n \!\cdot\! x) e^{-s T_n^x } \psi \Big( T_n^x + Y_n^{x,y} \Big) \Big]  \nonumber\\
& J_2 =  \sigma  \sqrt{2 \pi n}  \,   
 \sum_{k =0}^{ M_n - 1 } \bb E_{\bb Q_s^x}
  \Big[ (\varphi r_s^{-1} \chi_{k}^y)(G_n \!\cdot\! x) e^{-s T_n^x } \psi \Big( T_n^x + Y_n^{x,y} \Big)  \Big].
\end{align*}

\textit{Upper bound of $J_1$.} 
In order to prove that $J_1 \to 0$ as $n \to \infty$, we are led to consider two cases:
$s \geq 0$ and $s<0$.

When $s \geq 0$, since the function $\psi$ has a compact support, say $[b_1, b_2]$,
we have $T_n^x + Y_n^{x,y} \in [b_1, b_2]$. 
Noting that $Y_n^{x,y} \leq 0$, we get $T_n^x \geq b_1$,
and hence $e^{-s T_n^x} \leq e^{-s b_1} \leq c$.
Since the function $\psi$ is directly Riemann integrable on $\bb R$,
it is bounded and hence
\begin{align*}
J_1 \leq  c \sqrt{n}  \,  \bb Q_s^x \Big( Y_n^{x,y} \leq - \floor{A \log n} \Big). 
\end{align*}
Applying Lemma \ref{Lem_Regu_pi_s} with $k = \floor{A \log n}$ 
and choosing $A$ sufficiently large, we obtain that,
as $n \to \infty$, uniformly in $s \in [0, s_0)$, 
\begin{align}\label{Pf_LLTUpp_J1}
J_1 \leq  C \sqrt{n}  \,  e^{-c \floor{A \log n} } \to 0. 
\end{align}

When $s < 0$, 
from $T_n^x + Y_n^{x,y} \in [b_1, b_2]$, we get that
$e^{-s T_n^x} \leq e^{-s b_2 + sY_n^{x,y}}$.
Hence, by H\"{o}lder's inequality, Lemmas \ref{Lem_Regu_pi_s} and \ref{Lem_Inte_Regu_a},
we obtain that, as $n \to \infty$, uniformly in $s \in (-s_0, 0]$,
\begin{align*}
J_1 & \leq  C \sqrt{n}  \, \left\{ \bb E_{\bb Q_s^x} \left( \frac{1}{ \delta(y, G_n \!\cdot\! x)^{-2s} } \right)
 \bb Q_s^x \Big( Y_n^{x,y} \leq - \floor{A \log n} \Big)  \right\}^{1/2}  \nonumber\\
& \leq  C \sqrt{n}  \,  e^{-c \floor{A \log n} }  \to 0,  
\end{align*}
where again $A$ is taken to be large enough.

\textit{Upper bound of $J_2$.} 
Note that the support of the function $\chi_{k}^y$ is contained in the set $\{x \in \bb P(V): -Y_n^{x,y} \in [a (k-1), a(k+1)]\}$.
Therefore on  $\supp \chi_{k}^y$ 
we have  
$- a \leq Y_n^{x,y} + a k \leq a$.
For any $w \in \bb R$ set $\Psi_{s} (w) = e^{-sw} \psi(w)$ and, according to \eqref{smoo001}, define   
${\Psi}^+_{s, \ee}(w) = \sup_{|w - w'| \leq \ee} \Psi_{s} (w')$, for $\varepsilon\in(0,\frac{1}{2})$.
Let 
\begin{align}\label{Def_varphi_skee}
\varphi_{s,k}^y (x) 
   = (\varphi r_s^{-1} \chi_{k}^y)(x),
   \quad  x \in \bb P(V). 
\end{align}  
With this notation, choosing $a \in (0,\ee)$, it follows that 
\begin{align*}
J_2 
 & \leq  \sigma  \sqrt{2 \pi n}     
  \sum_{k =0}^{ M_n -1 }  
\bb E_{\bb Q_s^x}
  \Big[ \varphi_{s,k}^y (G_n \!\cdot\! x) 
  e^{-s Y_n^{x,y} }
    \Psi_{s,\ee}^+ ( T_n^x - a k)  \Big] \\
 & \leq  \sigma  \sqrt{2 \pi n}     
  \sum_{k =0}^{ M_n -1 }  e^{-s a (k-1) }
\bb E_{\bb Q_s^x}
  \Big[ \varphi_{s,k}^y (G_n \!\cdot\! x) 
    \Psi_{s,\ee}^+ ( T_n^x - a k)  \Big].
\end{align*} 
Since the function $\Psi^+_{s, \ee}$ is non-negative and integrable on the real line, 
using Lemma \ref{estimate u convo}, we get
\begin{align*} 
J_2  & \leq  ( 1+ c_{\rho}(\ee) )
\sigma  \sqrt{2\pi n}  \sum_{k =0}^{ M_n -1 }  e^{-s a (k-1)}    \mathbb{E}_{\mathbb{Q}_{s}^{x}}
\left[ \varphi_{s,k}^y (G_n \!\cdot\! x)  
({\Psi}^+_{s, \ee} * \rho_{\ee^2}) (T_n^x - a k) \right], 
\end{align*} 
where $c_{\rho}(\ee) >0$ is a constant  converging to $0$ as $\ee \to 0$. 
Since the function $\widehat\rho_{\ee^{2}}$ is integrable on $\bb{R}$, 
by the Fourier inversion formula, we have 
\begin{align*}
(\Psi^+_{s, \eta, \ee} * \rho_{\ee^2}) (T_n^x - a k)
= \frac{1}{2\pi} \int_{\bb{R}}e^{iu (T_n^x - a k)} 
 \widehat {\Psi}^+_{s, \ee}(u) \widehat \rho_{\ee^2}(u) du. 
\end{align*}
By the definition of the perturbed operator $R_{s,iu}$ (cf. \eqref{Def_Rsz_Ch7}),  
and Fubini's theorem, we obtain
\begin{align} \label{Scal Bnxl 01}
J_2 & \leq  (1+ c_{\rho}(\ee)) \sigma   \sqrt{\frac{n}{2\pi}} 
  \sum_{k =0}^{\infty}  \mathds{1}_{ \{k \leq M_n - 1 \} }  e^{-s a (k-1)}  
   \int_{\mathbb{R}} e^{-iu a k}  R_{s,iu}^n (\varphi_{s,k}^y) (x)
\widehat {\Psi}^+_{s,  \ee}(u) \widehat \rho_{\ee^2}(u) du. 
\end{align}
To deal with the integral in \eqref{Scal Bnxl 01}, we shall use Lemma \ref{Lem_R_st_limit}. 
Note that $e^{ \frac{C k^2}{n} } \to 1$ as $n \to \infty$, uniformly in $0 \leq k \leq M_n -1$. 
Since the function $\widehat {\Psi}^+_{s, \ee} \widehat \rho_{\ee^2}$ is compactly supported on $\bb R$,  
applying Lemma \ref{Lem_R_st_limit} with $\varphi = \varphi_{s,k}^y$, 
$\psi = \widehat {\Psi}^+_{s,  \ee} \widehat \rho_{\ee^2}$
and  $l = \frac{a k}{n \sigma}$, 
we obtain that there exists a constant $c >0$ such that for all $s \in (-s_0, s_0)$, 
$x \in \bb P(V)$, $y \in \bb P(V^*)$, $0 \leq k \leq M_n -1$, $\varphi \in \mathscr{B}_{\gamma}$ and $n \geq 1$, 
\begin{align*}  
& \left| \sigma  \sqrt{\frac{n}{2\pi}}  
  \int_{\mathbb{R}} e^{-it a k} R_{s,iu}^n (\varphi_{s,k}^y) (x)
\widehat {\Psi}^+_{s,  \ee}(u) \widehat \rho_{\ee^2}(u) du   
- \widehat {\Psi}^+_{s,  \ee}(0) \widehat \rho_{\ee^2}(0) \pi_s (\varphi_{s,k}^y) \right| 
 \leq \frac{c}{\sqrt{n}} \| \varphi_{s,k}^y \|_{\gamma}. 
\end{align*}
Using \eqref{Bound_varphi_Holder_norm}
and choosing a sufficiently small $\gamma > 0$, one can verify that 
the following series $\frac{c}{\sqrt{n}} \sum_{k = 0}^{ M_n -1 }  
  \| \varphi_{s,k}^y \|_{\gamma}$
converges to $0$ as $n \to \infty$. 
Consequently, we are allowed to interchange the limit as $n \to \infty$
and the sum over $k$ in \eqref{Scal Bnxl 01}. 
Then, noting that $\widehat\rho_{\ee^{2}}(0) =1$ and 
$\widehat {\Psi}^+_{0, \ee}(0)  = \int_{\mathbb{R}} \sup_{w' \in \mathbb{B}_{\ee}(w)} \Psi_{0}(w') dw$, 
we obtain that uniformly in $v \in V$ and $f \in V^*$ with $\|v\|=1$ and $\| f \| =1$,
\begin{align} \label{LimsuBn a}
\limsup_{n \to \infty}  J_{2}  
& \leq    (1 + c_{\rho}(\ee)) \int_{\mathbb{R}} \sup_{w' \in \mathbb{B}_{\ee}(w)} \Psi_{0}(w') dw
 \sum_{k =1}^{\infty} \nu \left( \varphi_{0,k}^y \right)  \notag\\
 & =    (1 + c_{\rho}(\ee)) \nu(\varphi) \int_{\mathbb{R}} \sup_{w' \in \mathbb{B}_{\ee}(w)} \Psi_{0}(w') dw,
\end{align}
where in the last equality we used \eqref{Unit-partition001-222}. 
Letting $\ee \to 0$, $n \to \infty$, 
and noting that $c_{\rho}(\ee) \to 0$, 
we obtain the desired upper bound for $J_2$: 
uniformly in $v \in V$ and $f \in V^*$ with $\|v\|=\| f \| =1$,
\begin{align} \label{ScaProLimBn Upper 01}
\limsup_{n \to \infty} J_{2}  \leq  \nu(\varphi) \int_{\bb R} \psi(u) du. 
\end{align}

\textit{Lower bound of $J_{2}$.} 
Since on the set $\{-Y_n^{x,y} \in [a (k-1), a(k+1)]\}$, we have
$- a \leq Y_n^{x,y} + a k \leq a$.
Set $\Psi_{s} (w) = e^{-sw} \psi(w)$, $w \in \bb R$
and ${\Psi}^+_{s, \ee}(w) = \inf_{w' \in \mathbb{B}_{\ee}(w)} \Psi_{s} (w')$, for $\varepsilon\in(0,\frac{1}{2})$.  
Then, with $a \in (0, \ee)$,
\begin{align*}
J_2 & \geq  \sigma  \sqrt{2 \pi n}     
  \sum_{k =0}^{ M_n-1 }
\bb E_{\bb Q_s^x}
  \Big[ \varphi_{s,k}^{y}(G_n \!\cdot\! x) e^{-s Y_n^{x,y} } \Psi_{s,\ee}^- ( T_n^x - a k) \Big]  \nonumber\\
 & \geq  \sigma  \sqrt{2 \pi n}     
  \sum_{k =0}^{ M_n-1 }  e^{-s a k}
\bb E_{\bb Q_s^x}
  \Big[ \varphi_{s,k}^{y}(G_n \!\cdot\! x) 
    \Psi_{s,\ee}^- ( T_n^x - a k)  \Big].
\end{align*} 
By Fatou's lemma, it follows that
\begin{align*} 
\liminf_{n \to \infty} J_2  
\geq  \sum_{k =0}^{\infty} \liminf_{n \to \infty}  \sigma   \sqrt{2 \pi n}      
   e^{-s a k}   \mathbb{E}_{\mathbb{Q}_{s}^{x}}
\left[  \varphi_{s,k}^y (G_n \!\cdot\! x)  \Psi^-_{s, \ee} (T_n^x - a k)\right]. 
\end{align*}
Since $s=o(1)$ as $n\to\infty$, 
we see that for fixed $k \geq 1$, we have $e^{-s a k} \to 1$ as $n \to \infty$. 
Since the function $\Psi^-_{s, \ee}$ is non-negative and integrable on the real line, 
by Lemma \ref{estimate u convo}, we get
\begin{align} \label{ScaProLimAn Bn 0134}
 \liminf_{n \to \infty} J_2  
& \geq  \sum_{k =0}^{\infty}  \liminf_{n \to \infty}  \sigma \sqrt{2 \pi n}
\bb E_{\bb Q_s^x}
  \Big[ \varphi_{s,k}^y (G_n \!\cdot\! x)
    (\Psi^-_{s,  \ee} * \rho_{\ee^2}) (T_n^x - a k)  \Big]  \nonumber\\
&  \quad  -  \sum_{k =0}^{\infty} \limsup_{n \to \infty}  \sigma  \sqrt{2 \pi n}
  \int_{|w| \geq \ee}  \mathbb{E}_{\mathbb{Q}_{s}^{x}}  
  \left[  \varphi_{s,k}^y (G_n \!\cdot\! x) \Psi^-_{s, \ee}  (T_n^x - a k - w)
   \right] \rho_{\ee^2}(w)  dw  \nonumber\\
& =: J_3 - J_4. 
\end{align}

\textit{Lower bound of $J_3$.} 
Proceeding as in the proof of the upper bound \eqref{LimsuBn a} and using Lemma \ref{Lem_R_st_limit},
we obtain 
\begin{align} \label{Low_Boun_J3}
  J_3  \geq  \int_{\mathbb{R}} \inf_{w' \in \mathbb{B}_{\ee}(w)} \Psi^-_{0, \ee}(w') dw
 \sum_{k =0}^{\infty} \nu \left(  \varphi_{0, k}^y \right).  
\end{align}
Taking the limit as $\ee\to 0$ and $n\to\infty$,
we get the lower bound for $J_3$: 
uniformly in $v \in V$ and $f \in V^*$ with $\|v\|=1$ and $\| f \| =1$, 
\begin{align} \label{ScaProLimBnLow_J3}
\liminf_{n \to \infty}  J_3 \geq  \nu(\varphi) \int_{\bb R} \psi(u) du. 
\end{align}

\textit{Upper bound of $J_4$.} 
By Lemma \ref{estimate u convo}, we have
$\Psi^-_{s, \ee} \leq (1+ c_{\rho}(\ee)) \Psi^+_{s, \ee} * \rho_{\ee^2}$. 
Applying Lemma \ref{Lem_R_st_limit} with $\varphi = \varphi_{s,k}^y$ 
and $\psi = \widehat{\Psi}^+_{s, \ee} \widehat{\rho}_{\ee^2}$,
it follows from the Lebesgue dominated convergence theorem that  
\begin{align}\label{Pf_Lower_J41}
J_{4}  \leq (1 + c_{\rho}(\ee) ) \sum_{k=0}^{ \infty }
\nu \left( \varphi_{0,k}^y \right)  \widehat{\Psi}^+_{0, \ee}(0) \widehat{\rho}_{\ee^2}(0)
\int_{|w| \geq \ee} \rho_{\ee^2}(w) dw, 
\end{align}
which converges to $0$ as $\ee \to 0$.

Combining \eqref{ScaProLimAn Bn 0134}, \eqref{ScaProLimBnLow_J3} and \eqref{Pf_Lower_J41}, 
we get the desired lower bound for $J_2$: 
uniformly in $v \in V$ and $f \in V^*$ with $\|v\|=1$ and $\| f \| =1$, 
\begin{align} \label{lowelast001}
\liminf_{n\to\infty} J_2 \geq  \nu(\varphi) \int_{\bb R} \psi(u) du.
\end{align}
Putting together
\eqref{PosiScalAn 01}, \eqref{Pf_LLTUpp_J1}, \eqref{ScaProLimBn Upper 01} and \eqref{lowelast001},
we obtain the asymptotic \eqref{LLT-Moderate-01}. 
This ends the proof of Theorem \ref{ThmLocal02}. 
\end{proof}

\textbf{Acknowledgment.}
The work has been supported by the National Natural Science Foundation of China 
(Grants No. 11971063, No. 11571052 and No. 11731012).
The work has also benefited from the support of the Centre Henri Lebesgue (CHL, ANR-11-LABX-0020-01). 


\end{document}